\documentclass[12pt]{amsart}

\setlength{\oddsidemargin}{0.25truein}
\setlength{\evensidemargin}{0.25truein}
\setlength{\topmargin}{0.0truein}
\setlength{\textwidth}{6.0truein}
\setlength{\textheight}{8.5truein}
\marginparwidth 1.0in

\input{xy}
\xyoption{all}
\usepackage{graphicx}
\usepackage{xcolor}
\usepackage{verbatim}
\usepackage[colorlinks=true, urlcolor=blue, citecolor=red, pdfborder={0 0 0}]{hyperref}
\usepackage{amsmath}
\usepackage{amssymb}
\usepackage{tikz-cd}

\newtheorem{theorem}{Theorem} [section]
\newtheorem{prop}[theorem]{Proposition}
\newtheorem{lemma}[theorem]{Lemma}
\newtheorem{cor}[theorem]{Corollary}
\newtheorem{conjecture}[theorem]{Conjecture}

\theoremstyle{definition}

\newtheorem{remark}[theorem]{Remark}

\numberwithin{equation}{section}
\numberwithin{figure}{section}

\newcommand\A{{\mathbb A}}
\newcommand\C{{\mathbb C}}

\renewcommand\P{{\mathbb P}}

\newcommand\R{{\mathbb R}}

\newcommand\Q{{\mathbb Q}}

\newcommand\D{{\mathbb D}}
\newcommand\Gal{\operatorname{Gal}}


\newcommand\Qbar{\overline{\mathbb{Q}}}

\newcommand\Kbar{\overline{K}}
 
\newcommand\cM {\mathcal{M}}

\newcommand\ord{\operatorname{ord}}  

\newcommand\<{\langle} 
\renewcommand\>{\rangle} 
\newcommand\eps{\varepsilon}
\renewcommand\phi{\varphi}

\newcommand\Hyp{\mathbb{H}}
\newcommand\good{\mathrm{good}}
\newcommand\bad{\mathrm{bad}}
\newcommand\dist{\mathrm{dist}}

\definecolor{purple}{rgb}{0.5,0.0,1}

\hyphenation{archi-me-dean}


\begin{document}

\author{Laura DeMarco}
\address{
Laura DeMarco\\
Department of Mathematics\\
Northwestern University\\
2033 Sheridan Road\\
Evanston, IL 60208 \\
USA
}
\email{demarco@northwestern.edu}

\author{Holly Krieger}
\address{
Holly Krieger\\
Department of Pure Mathematics and Mathematical Statistics\\
 University of Cambridge \\
Cambridge CB3 0WB\\
UK
}
\email{hkrieger@dpmms.cam.ac.uk}

\author{Hexi Ye}
\address{
Hexi Ye\\
Department of Mathematics\\
Zhejiang University\\
Hangzhou, 310027\\
China}
\email{yehexi@gmail.com}

\title{Uniform Manin-Mumford for a family of genus 2 curves}

\begin{abstract} We introduce a general strategy for proving quantitative and uniform bounds on the number of common points of height zero for a pair of inequivalent height functions on $\mathbb{P}^1(\overline{\mathbb{Q}}).$  We apply this strategy to prove a conjecture of Bogomolov, Fu, and Tschinkel asserting uniform bounds on the number of common torsion points of elliptic curves in the case of two Legendre curves over $\mathbb{C}$.   As a consequence, we obtain two uniform bounds for a two-dimensional family of genus 2 curves: a uniform Manin-Mumford bound for the family over $\mathbb{C}$, and a uniform Bogomolov bound for the family over $\overline{\mathbb{Q}}.$
\end{abstract}

\date{\today}

\maketitle

\section{Introduction}

In this article, we use the Arakelov-Zhang intersection of adelically-metrized line bundles on $\mathbb{P}^1(\overline{\mathbb{Q}})$ to prove a uniform Manin-Mumford bound for a two-dimensional family of genus 2 curves over $\C$.  The Manin-Mumford Conjecture, proved by Raynaud \cite{Raynaud:1}, asserts the following:  Let $X$ be any smooth complex projective curve of genus $g \geq 2$, $P \in X(\C)$ any point, $j_P: X \hookrightarrow J(X)$ the Abel-Jacobi embedding of $X$ into its Jacobian $J(X)$ based at $P$, and $J(X)^{\mathit{tors}}$ the set of torsion points of the Jacobian.  Then 
\begin{equation} \label{Raynaud finiteness}
	|j_P(X) \cap J(X)^{\mathit{tors}}| < \infty.
\end{equation}
In the case of genus $g = 2,$ the curve is hyperelliptic, and the fixed points of the hyperelliptic involution provide geometrically natural choices of base point for the Abel-Jacobi map.  We show there is a uniform bound on the number of torsion images under such a map, provided the curve is also bielliptic, meaning that it admits a degree-two branched cover of an elliptic curve. 

\begin{theorem} \label{MMtheorem} There exists a uniform constant $B$ such that 
		$$|j_P(X) \cap J(X)^{\mathit{tors}}| \leq B$$
for all smooth, bielliptic curves $X$ over $\C$ of genus 2 and all Weierstrass points $P$ on $X$.  
\end{theorem}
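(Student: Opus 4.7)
\emph{Proof plan.} The plan is to reduce the Manin-Mumford count on $X$ to a bound on common torsion of two elliptic curves via the bielliptic structure, then to recast the latter as a count of common preperiodic points of two Lattès maps on $\P^1$ and invoke the uniform Bogomolov-Fu-Tschinkel bound for pairs of Legendre curves proved earlier in the paper. First, let $\iota$ be the hyperelliptic involution and $\sigma$ a bielliptic involution; they commute, so $\{1,\iota,\sigma,\iota\sigma\}$ is a Klein four-subgroup of $\Aut(X)$. The quotients $E_1 = X/\langle\sigma\rangle$ and $E_2 = X/\langle\iota\sigma\rangle$ are elliptic curves, and the two bielliptic maps assemble into a morphism $(\phi_1,\phi_2):X \to E_1 \times E_2$ whose pushforward $\phi_{1*}+\phi_{2*}: J(X) \to E_1 \times E_2$ is an isogeny. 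Fix a Weierstrass point $P$; since $\iota(P)=P$, the involution on $E_i$ induced by $\iota$ fixes $\phi_i(P)$, so $\phi_i(P)$ is $2$-torsion, and after translating the group law we may arrange $\phi_i(P)=0_{E_i}$. The functorial identity
\[
(\phi_{1*},\phi_{2*})\circ j_P = (\phi_1,\phi_2)
\]
then gives $j_P(Q) \in J(X)^{\mathit{tors}}$ if and only if $\phi_1(Q)\in E_1^{\mathit{tors}}$ and $\phi_2(Q)\in E_2^{\mathit{tors}}$.

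Next I would translate this simultaneous torsion problem into dynamics on $\P^1$. Since $\iota$ commutes with each bielliptic involution, each $\phi_i$ descends through the hyperelliptic quotient $\pi:X\to\P^1$ to a degree-two map $p_i:\P^1\to\P^1_i$, where $\P^1_i = E_i/[-1]$. Placing $E_i$ in Legendre form $y^2=x(x-1)(x-t_i)$, the $x$-coordinate intertwines multiplication-by-two with a Lattès map $L_{t_i}$ on $\P^1_i$, so $\phi_i(Q)\in E_i^{\mathit{tors}}$ is equivalent to $p_i(\pi(Q))\in\Preper(L_{t_i})$. Counting $j_P^{-1}(J(X)^{\mathit{tors}})$ therefore reduces, up to the factor $2$ coming from the hyperelliptic cover, to bounding
\[
\#\left\{x\in\P^1(\C)\,:\,p_1(x)\in\Preper(L_{t_1})\text{ and }p_2(x)\in\Preper(L_{t_2})\right\}.
\]

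Finally, I would recognize this last quantity as counting common points of height zero for a pair of adelically-metrized line bundles on $\P^1$—the pullbacks under $p_1$ and $p_2$ of the two Lattès canonical heights—and invoke the uniform Bogomolov-Fu-Tschinkel bound for two Legendre curves established earlier in the paper. The main obstacle I foresee is not the reduction itself—which is essentially formal—but the careful bookkeeping: one must verify that every bielliptic genus $2$ curve together with a Weierstrass base point determines a pair of Legendre parameters $(t_1,t_2)$ that lies within the scope of the main theorem (including handling the isotrivial and isogenous special cases), and that the $2$-to-$1$ quotients $\pi$, $p_1$, and $p_2$ contribute only bounded multiplicity to the final count, so that a single uniform constant $B$ works for every $X$ and every $P$.
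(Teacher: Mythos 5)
Your plan is correct and takes essentially the same route as the paper's proof: the intrinsic construction via the Klein four-group $\{1,\iota,\sigma,\iota\sigma\}$ gives exactly the isogeny $J(X)\to E_1\times E_2$ that Proposition \ref{surface} produces from the explicit model $y^2=x^6-s_1x^4+s_2x^2-1$, and both arguments then reduce the Manin--Mumford count to the uniform bound on $|\pi_1(E_1^{\mathit{tors}})\cap\pi_2(E_2^{\mathit{tors}})|$ supplied by Corollary \ref{BFT special case}, i.e.\ by Theorem \ref{Legendretheorem}. The bookkeeping concern you flag is handled in the paper by the factor $16$ from the degree-$4$ isogeny $\Phi$ and the degree-$4$ cover $\pi_1\times\pi_2$, together with the observation that differences of Weierstrass points are torsion (so the count is independent of the Weierstrass base point), and the requirement $t_1\neq t_2$ is automatic because $|\pi_1(E_1[2])\cap\pi_2(E_2[2])|=3<4$.
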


\noindent
The curves satisfying the hypothesis of Theorem \ref{MMtheorem} form a complex surface $\mathcal{L}_2$ in the moduli space $\cM_2$ of genus 2 curves.  These $X$ are also characterized by the property that their Jacobians admit real multiplication by the real quadratic order of discriminant 4.  Further details on $\mathcal{L}_2$ are given in Section \ref{MMtheoremsection}.

\begin{remark} We do not give an explicit value for the $B$ of Theorem \ref{MMtheorem}, but this bound can be made effective by estimating the continuity constants of Section \ref{archsection}.  Poonen showed that there are infinitely many curves $X \in \mathcal{L}_2$ for which $|j_P(X) \cap J(X)^{\mathit{tors}}|$ is at least 22, taking $P$ to be a Weierstrass point on $X$  \cite[Theorem 1]{Poonen:1}.  More recently, Stoll found an example with $|j_P(X) \cap J(X)^{\mathit{tors}}| = 34$ for Weierstrass point $P$ \cite{Stoll:example}; the curve $X$ is defined over $\Q$.  We know of no curve $X \in \mathcal{M}_2(\C)$ and point $P \in X$ satisfying $|j_P(X) \cap J(X)^{\mathit{tors}}|>34$.
\end{remark}

The question of uniformity in \eqref{Raynaud finiteness} was raised by Mazur in \cite{Mazur:curves} who asked if a bound could be given that depends only on the genus $g$ of the curve $X$.  Quantitative bounds on torsion points on curves have been obtained when the curve is defined over a number field, notably by Coleman \cite{Coleman}, Buium \cite{Buium}, Hrushovski \cite{Hrushovski}, and more recently by Katz, Rabinoff, and Zureick-Brown \cite{KRZB}.   By quantifying the $p$-adic approach to \eqref{Raynaud finiteness}, these authors achieve bounds for general families of curves; however, these bounds all involve dependence on field of definition or the choice of a prime for the family of curves, so are not uniform for families over $\Qbar$ or $\mathbb{C}$.  

Our new technique which yields Theorem \ref{MMtheorem} is a quantification of the approach of Szpiro, Ullmo, and Zhang \cite{Szpiro:Ullmo:Zhang, Ullmo:Bogomolov, Zhang:Bogomolov} to proving \eqref{Raynaud finiteness}, utilizing adelic equidistribution theory.  We first reduce to the setting where the curve is defined over $\Qbar$.  Over $\Qbar$, we build on the proof of the quantitative equidistribution theorem for height functions on $\P^1(\Qbar)$ of Favre and Rivera-Letelier \cite{FRL:equidistribution}.  

In fact, we deduce Theorem \ref{MMtheorem} from a case of the following conjecture, discussed by Bogomolov and Tschinkel \cite{Bogomolov:Tschinkel:small} and stated formally as \cite[Conjectures 2 and 12]{BFT:torsion}, which asserts uniform bounds on common torsion points for pairs of elliptic curves.  By a {\em standard projection} $\pi: E\to \P^1$ of an elliptic curve $E$ over $\C$, we mean any degree-two quotient that identifies a point $P$ and its inverse $-P$.  Note that $\pi$ has a simple critical point at each of the four elements of the 2-torsion subgroup $E[2]$.

\begin{conjecture} \cite{BFT:torsion} \label{BFTconj}  There exists a uniform constant $B$ such that 
$$|  \pi_1(E_1^{\mathit{tors}}) \cap \pi_2(E_2^{\mathit{tors}}) | \leq B$$
for any pair of elliptic curves $E_i$ over $\C$ and any pair of standard projections $\pi_i$ for which 
$\pi_1(E_1[2]) \not= \pi_2(E_2[2])$.
\end{conjecture}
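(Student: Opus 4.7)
The plan is to translate Conjecture~\ref{BFTconj} into a statement about common zeros of two dynamical canonical heights on $\P^1$ and then apply the quantitative adelic equidistribution strategy announced in the introduction. Each pair $(E_i,\pi_i)$ determines a degree-$4$ Lattès map $f_i:\P^1\to\P^1$ semiconjugating multiplication by $2$, namely $\pi_i\circ[2]=f_i\circ\pi_i$, and under this semiconjugacy $\pi_i(E_i^{\mathit{tors}})$ is exactly the set $\{z\in\P^1(\C):\hhat_{f_i}(z)=0\}$ of preperiodic points for $f_i$, while $\pi_i(E_i[2])$ is the set of critical values of $f_i$. The conjecture thus becomes a uniform bound on the number of common height-zero points of two such Lattès maps, given that their critical-value sets differ.

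I would first reduce to pairs defined over $\Qbar$ by a specialization argument: an admissible counterexample over $\C$ with arbitrarily many common torsion images would, by spreading out over a finitely generated subring of $\C$ and specializing to $\Qbar$-points, produce a counterexample already over $\Qbar$ (the relevant moduli of quadruples and the locus of common height-zero points are both defined by algebraic data). Over $\Qbar$, attach to each $f_i$ its adelically metrized dynamical line bundle $\cLbar_{f_i}$ and form the Arakelov--Zhang intersection $\langle \cLbar_{f_1}, \cLbar_{f_2}\rangle$. The assumption $\pi_1(E_1[2])\neq \pi_2(E_2[2])$ forces the two canonical measures to disagree, so the pairing is strictly positive; a quantitative refinement of the Favre--Rivera-Letelier equidistribution theorem, in the spirit of the general strategy described in the abstract, should then yield
$$\bigl|\{z\in\P^1(\Qbar):\hhat_{f_1}(z)=\hhat_{f_2}(z)=0\}\bigr| \;\le\; \frac{C}{\langle \cLbar_{f_1}, \cLbar_{f_2}\rangle},$$
with $C$ depending only on the common degree $4$ of $f_1$ and $f_2$.

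The remaining step is to obtain a uniform positive lower bound $\langle \cLbar_{f_1}, \cLbar_{f_2}\rangle \ge \delta$ as $(E_1,\pi_1,E_2,\pi_2)$ ranges over all admissible configurations. I would stratify the corresponding moduli space and treat each stratum separately. On a compact interior region the archimedean continuity estimates of Section~\ref{archsection}, applied in both variables, produce a positive minimum by compactness. Near each boundary stratum one must combine these archimedean bounds with explicit local contributions from Tate uniformization at the non-archimedean places where $E_1$ or $E_2$ has bad reduction, together with a quantitative version of the hypothesis $\pi_1(E_1[2])\neq\pi_2(E_2[2])$ that prevents archimedean cancellation as the critical-value sets approach each other.

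The main obstacle is precisely this uniform positivity of $\langle \cLbar_{f_1}, \cLbar_{f_2}\rangle$ across the full moduli of admissible pairs. The present paper carries the lower bound through only for pairs drawn from the one-parameter Legendre family, where the boundary is explicit and the degenerations are tractable. The full conjecture demands control of the pairing along every boundary stratum of the substantially larger parameter space of pairs of elliptic curves with standard projections --- in particular along the deepest strata, where both elliptic curves simultaneously degenerate, and along the diagonal locus, where the critical-value sets of $f_1$ and $f_2$ coalesce. I expect the diagonal locus to be the genuinely delicate case, since there the archimedean contribution to the Arakelov--Zhang pairing can become arbitrarily small and must be bounded below using a uniform quantitative form of the separation hypothesis on 2-torsion images.
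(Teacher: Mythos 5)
The statement you were given is labeled as a \emph{conjecture} in the paper (Conjecture~\ref{BFTconj}, attributed to Bogomolov--Fu--Tschinkel): the paper does not prove it. What the paper proves is the special case of maximal $2$-torsion overlap, $|\pi_1(E_1[2]) \cap \pi_2(E_2[2])| = 3$, stated as Corollary~\ref{BFT special case} and deduced from Theorem~\ref{Legendretheorem}. You correctly recognize this, and your outline of the general strategy --- reduction to $\Qbar$, passage to Latt\`es maps, bounding common height-zero points via the Arakelov--Zhang pairing --- matches the paper in spirit. But there is a substantive gap in the structure of your argument, not just in its scope.

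Your proposal rests on the inequality
$$\bigl|\{z:\hhat_{f_1}(z)=\hhat_{f_2}(z)=0\}\bigr| \;\le\; \frac{C}{\langle \cLbar_{f_1}, \cLbar_{f_2}\rangle},$$
``with $C$ depending only on the common degree $4$,'' after which a uniform lower bound on the pairing would finish the argument. No such inequality holds with a degree-dependent constant. The constant in the Favre--Rivera-Letelier quantitative equidistribution theorem is controlled by the modulus of continuity of the local potentials, and those continuity constants \emph{blow up} as the parameters degenerate toward the cusps. This is exactly why the paper's Theorem~\ref{boundingN} takes the form
$$\hat{h}_{t_1}\cdot\hat{h}_{t_2} \;\le\; \left(\eps+\frac{C(\eps)}{N(t_1,t_2)}\right)\bigl(h(t_1,t_2)+1\bigr),$$
with the unavoidable factor $h(t_1,t_2)+1$. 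Rearranged, this bounds $N$ only when the pairing exceeds $\eps\,(h(t_1,t_2)+1)$, so a uniform lower bound $\langle\cLbar_{f_1},\cLbar_{f_2}\rangle\ge\delta$ does \emph{not} suffice; it yields only finiteness via Zhang's inequality (Proposition~\ref{uniformZhang}), not a uniform bound. The essential additional input, which your proposal omits, is the asymptotic lower bound $\hat{h}_{t_1}\cdot\hat{h}_{t_2}\ge\alpha\,h(t_1,t_2)-\beta$ of Theorem~\ref{pairingasymptotics}, proved through the delicate hybrid-space analysis of the archimedean energies at the cusps. Even for the Legendre slice, it is the conjunction of Theorems~\ref{uniformlowerbound}, \ref{pairingasymptotics}, and~\ref{boundingN} that closes the loop. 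Any extension to the full conjecture would have to reproduce this trio over the larger moduli of pairs, not just the uniform positivity you flag as the remaining obstacle.

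One further comment: you locate the genuine difficulty on the ``diagonal locus'' where the critical-value sets coalesce. In the paper's framework the diagonal $t_1=t_2$ is excluded by hypothesis, and Theorem~\ref{uniformlowerbound} handles proximity to it across all heights by an arithmetic argument (the pairing cannot tend to zero without forcing $h(t_1,t_2)\to\infty$, contradicting the asymptotic lower bound). The harder obstruction to the general conjecture, as the paper's methods suggest, is the boundary degeneration where the Latt\`es families acquire bad reduction, since that is where the modulus-of-continuity problem lives and where the full strata of pairs of elliptic curves are genuinely larger than the Legendre slice.
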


\noindent
Note that if $\pi_1(E_1[2]) = \pi_2(E_2[2])$, then $E_1$ is isomorphic to $E_2$ and $\pi_1(E_1^{\mathit{tors}}) = \pi_2(E_2^{\mathit{tors}})$.  The finiteness of the set $\pi_1(E_1^{\mathit{tors}})  \cap  \pi_2(E_2^{\mathit{tors}})$, under the assumption that $\pi_1(E_1[2]) \not= \pi_2(E_2[2])$, follows from the main theorem of Raynaud in \cite{Raynaud:1}; indeed, the diagonal in $\P^1\times\P^1$ lifts to a (singular) curve $C \subset E_1\times E_2$ via $\pi_1 \times \pi_2$ with normalization of genus $g \geq 2$ \cite{Bogomolov:Tschinkel:small}.  

We prove Conjecture \ref{BFTconj} in the case of maximal overlap of the 2-torsion points; i.e., when
	$$|\pi_1(E_1[2]) \cap \pi_2(E_2[2])| = 3.$$
This setting corresponds to the case where the (normalization of the) curve $C$ in $E_1 \times E_2$ has  genus 2.  By fixing coordinates on $\P^1$, it suffices to work with the Legendre family of elliptic curves 
\begin{equation} \label{Leg}
	E_t \; : \; y^2 = x(x-1)(x-t)
\end{equation}
with $t \in \mathbb{C} \setminus \{ 0,1\}$ and the standard projection $\pi(x,y) = x$ on $E_t$.  (See Corollary \ref{BFT special case}.)

\begin{theorem} \label{Legendretheorem} There exists a uniform constant $B$ such that
$$|\pi(E_{t_1}^{\mathit{tors}}) \cap \pi(E_{t_2}^{\mathit{tors}})| \leq B,$$
for all $t_1 \not= t_2$ in $\C\setminus\{0,1\}$, for the curves $E_t$ defined by \eqref{Leg} and projection $\pi(x,y) = x$.
\end{theorem}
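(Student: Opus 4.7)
The plan is to reformulate Theorem \ref{Legendretheorem} dynamically and attack it via quantitative equidistribution on $\P^1$. Through the projection $\pi$, the multiplication-by-$2$ endomorphism of $E_t$ descends to a degree-$4$ Latt\`es map $f_t: \P^1 \to \P^1$, and the set $\pi(E_t^{\mathit{tors}})$ coincides with $\Preper(f_t)$, equivalently with the zero set of the Call--Silverman canonical height $\hat h_{f_t}$. Theorem \ref{Legendretheorem} therefore asserts that two distinct such heights in the Legendre family share only a bounded number of common zeros. First I would reduce to $t_1, t_2 \in \Qbar$: any common torsion images are algebraic over a finitely generated subfield $K \subset \C$, and a non-trivial specialization of $K$ preserves the cardinality of the intersection, so it suffices to treat algebraic parameters.

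For algebraic $t_1, t_2$, each $f_t$ determines an adelically metrized line bundle $\bar L_t$ on $\P^1$ with $h_{\bar L_t} = \hat h_{f_t}$, canonical measures $\mu_{t,v}$ at each place $v$ (on the Berkovich analytification at non-archimedean $v$), and a non-negative Arakelov--Zhang pairing $\<\bar L_{t_1}, \bar L_{t_2}\>_{AZ}$ that vanishes precisely when $\mu_{t_1,v} = \mu_{t_2,v}$ for every $v$. Given $N$ common zero-height points, forming their Galois orbit and applying a refinement of the Favre--Rivera-Letelier quantitative equidistribution place-by-place --- pairing local potentials for $\bar L_{t_1}$ against empirical measures that approximate $\mu_{t_2}$ and vice versa --- yields an inequality of the form
\begin{equation*}
	\< \bar L_{t_1}, \bar L_{t_2} \>_{AZ} \; \leq \; \frac{\Phi(t_1, t_2)}{N},
\end{equation*}
where $\Phi$ is controlled by continuity/Lipschitz norms of the local potentials. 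Rearranging gives $N \leq \Phi(t_1,t_2)/\<\bar L_{t_1}, \bar L_{t_2}\>_{AZ}$, converting the theorem into a uniform bound on that ratio.

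The main obstacle, and the heart of the argument, is controlling this ratio uniformly over $t_1 \neq t_2$. On a fixed compact subset of $(\C \setminus \{0,1\})^2$ lying off the diagonal, continuity of the equilibrium measures in $t$ --- quantified by the archimedean continuity constants referred to in Section \ref{archsection} --- gives both a uniform upper bound on $\Phi$ and a uniform positive lower bound on the archimedean contribution to the pairing. The delicate regimes are $t_1 \to t_2$ and $t_i \to \{0,1,\infty\}$: in both cases the Latt\`es dynamics degenerates (the two maps coalesce, or $f_{t_i}$ approaches an explicit power/Chebyshev-type Latt\`es limit), and a careful potential-theoretic comparison must show that $\Phi$ cannot blow up faster than the pairing decays. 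Non-archimedean places are better behaved, contributing positively at primes of bad reduction for $f_{t_1}$ or $f_{t_2}$, and the arithmetic of $t_1 - t_2$ (its factorization across finite places) enters here to complement the archimedean estimate. Combining the equidistribution inequality with these uniform local controls produces the desired constant $B$.
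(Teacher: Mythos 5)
Your high-level strategy is the same as the paper's: reduce to $\Qbar$ by specialization, pass to the Arakelov--Zhang pairing $\hat h_{t_1} \cdot \hat h_{t_2}$, apply quantitative Favre--Rivera-Letelier equidistribution with regularized empirical measures to get an inequality roughly of the form $\hat h_{t_1} \cdot \hat h_{t_2} \lesssim \Phi(t_1,t_2)/N$, and then try to bound $N$ by controlling the right-hand side in terms of the pairing. The paper does exactly this (Sections 6 and 7).

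There is, however, a genuine gap in how you propose to close the argument, namely in your claim that compactness in $(\C\setminus\{0,1\})^2\setminus\Delta$ plus analysis near the diagonal and the cusps yields a uniform bound on the ratio $\Phi(t_1,t_2)/\langle \bar L_{t_1}, \bar L_{t_2}\rangle_{AZ}$. The difficulty is that this ratio genuinely is unbounded as a \emph{local analytic} function of complex pairs $(t_1,t_2)$: as $t_1\to t_2$, the archimedean pairing tends to $0$ while the continuity modulus $\Phi$ stays bounded, so no pointwise local estimate near the diagonal can work. The theorem is rescued only by the \emph{global arithmetic} structure: when $t_1,t_2\in\Qbar$ are close at one archimedean embedding, the product formula forces them far apart at other places, which makes $h(t_1,t_2)$ large, and large $h(t_1,t_2)$ in turn forces the pairing to be large. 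This is precisely why the paper needs two separate lower bounds on the pairing: the asymptotic bound $\hat h_{t_1}\cdot\hat h_{t_2}\geq \alpha h(t_1,t_2)-\beta$ (Theorem~\ref{pairingasymptotics}) for the large-height regime, and the uniform bound $\hat h_{t_1}\cdot\hat h_{t_2}\geq\delta>0$ (Theorem~\ref{uniformlowerbound}) for bounded height; Theorem~\ref{uniformlowerbound} is itself deduced from Theorem~\ref{pairingasymptotics} together with the product formula, exactly because the pure local-archimedean picture fails near the diagonal. Relatedly, the upper bound from quantitative equidistribution must be written in the form $\hat h_{t_1}\cdot\hat h_{t_2}\leq (\eps + C(\eps)/N)(h(t_1,t_2)+1)$, separating a small multiple of $h(t_1,t_2)$ from the $1/N$ term, because $\Phi$ unavoidably grows like $h(t_1,t_2)$ (it records moduli of continuity of potentials and heights of regularization radii, both of which degrade as $t_i$ approaches $0,1,\infty$); only then does the large-height comparison against $\alpha h - \beta$ give a uniform bound on $N$. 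Your sketch does not isolate this case split or the $\eps$-absorption, so as written it would not yield the uniform constant. Two smaller points: the degeneration near the cusps is not to a power/Chebyshev-type map but is captured via the hybrid (Berkovich) space limit with an interval measure on a segment in $\P^{1,an}_{\C((T))}$, and the relevant non-archimedean quantities are $t_2/t_1$ and $(1-t_2)/(1-t_1)$ rather than $t_1-t_2$ itself.
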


To prove Theorem \ref{Legendretheorem}, we introduce a general strategy for bounding the number of common height-zero points for any pair of distinct height functions $h_1, h_2: \P^1(\Qbar) \to \R$ that arise from continuous, semipositive, adelic metrics on the line bundle $\mathcal{O}_{\P^1}(1)$.  There is a natural Arakelov-Zhang pairing between any two such heights, given by the intersection number of the associated metrized line bundles.  Our heights are normalized so this intersection number, which we denote by $h_1 \cdot h_2$, will satisfy 
	$$h_1 \cdot h_2 \geq 0 \mbox{ with equality if and only if } h_1 = h_2.$$  
Details on these heights and the pairing are given in Section \ref{background}.  The value of $h_1 \cdot h_2$ provides a notion of distance between the two heights (as was observed by Fili in \cite{Fili:energy}).   It follows from equidistribution \cite{ChambertLoir:equidistribution, FRL:equidistribution, BRbook} that
\begin{equation} \label{equilimit}	\lim_{n\to\infty} h_2(x_n) =   h_1 \cdot h_2
\end{equation}
for any infinite sequence of distinct points $x_n \in \P^1(\Qbar)$ such that $h_1(x_n) \to 0$ as $n\to\infty$, suggesting that large numbers of common zeroes between $h_1$ and $h_2$ will imply that $h_1$ and $h_2$ are close.  However, this measure of closeness between two heights is not generally uniform in families of heights, because the rate of equidistribution is not uniform.  Nevertheless, by bounding the height pairing $h_1 \cdot h_2$ from below, we can obtain an upper bound on the number of common zeroes for certain families.

In the context of Theorem \ref{Legendretheorem}, we consider the family of height functions $\hat{h}_t$ on $\P^1(\Qbar)$ induced from the N\'eron-Tate canonical height on the elliptic curve $E_t$, for $t\in \Qbar\setminus\{0,1\}$; its zeroes are precisely the elements of $\pi(E_t^{\mathit{tors}})$.  We implement this general strategy by proving three bounds on the intersection pairing $\hat{h}_{t_1} \cdot \hat{h}_{t_2}$.  We prove a uniform lower bound on the pairing:

\begin{theorem} \label{uniformlowerbound} There exists $\delta > 0$ such that
$$\hat{h}_{t_1} \cdot \hat{h}_{t_2} \geq \delta$$
for all $t_1 \ne t_2 \in \overline{\mathbb{Q}} \setminus \{ 0, 1\}$.  
\end{theorem}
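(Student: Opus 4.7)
The plan is to use the Fili mutual-energy representation of the Arakelov--Zhang pairing \cite{Fili:energy}: writing $\mu_{t,v}$ for the Berkovich equilibrium measure of the Latt\`es map $\phi_t$ governing $\hat{h}_t$ at the place $v$,
\[
\hat{h}_{t_1}\cdot\hat{h}_{t_2} \;=\; \tfrac12\sum_v \iint -\log|z-w|_v\,d(\mu_{t_1,v}-\mu_{t_2,v})(z)\,d(\mu_{t_1,v}-\mu_{t_2,v})(w),
\]
where each local term is non-negative and vanishes precisely when the two local measures agree. The task reduces to bounding this sum below by a constant $\delta>0$ that depends only on the family, not on $t_1,t_2$.

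The first step is a compact-region estimate. On any compact subset $K\subset(\C\setminus\{0,1\})^2$ disjoint from the diagonal, the archimedean equilibrium measure $\mu_{t,\infty}$ -- the $\pi$-pushforward of Haar measure on $E_t(\C)$ -- varies weak-$*$ continuously in $t$, so the archimedean local energy $I_\infty(t_1,t_2)$ is continuous and strictly positive on $K$, giving a lower bound there. The content of the theorem is therefore the behavior of the pairing as $(t_1,t_2)$ approaches the boundary locus, consisting of the diagonal and the punctures $t_i\in\{0,1,\infty\}$.

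The second step, the diagonal regime, is where the product formula intervenes. If $t_1\ne t_2$ lie in $\Qbar$ with small $|t_1-t_2|_{v_0}$ at some archimedean $v_0$, then $\prod_v|t_1-t_2|_v=1$ forces a compensating non-archimedean place $v_1$ where $|t_1-t_2|_{v_1}$ is correspondingly large. I would establish a quantitative lower bound on the Berkovich mutual energy at $v_1$ in terms of $|t_1-t_2|_{v_1}$ together with the valuations of $t_i$ and $t_i-1$ that encode the reduction of $E_{t_i}$ at $v_1$, and verify that, summed over all places, the non-archimedean gain absorbs the archimedean loss. A parallel analysis, built on the semistable reduction of $E_t$ at places where $t_i\to 0,1,\infty$ non-archimedeanly, handles the puncture regime; the archimedean puncture regime is subsumed by the compactness argument after a change of Legendre coordinate.

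The main obstacle is the last step: making the non-archimedean energy estimate \emph{genuinely uniform} across all of $\Qbar$, independent of $[\Q(t_1,t_2):\Q]$ and of which place carries the degeneration. Heuristically this requires a global rigidity statement -- that the mutual configuration of $\mu_{t_1,v}$ and $\mu_{t_2,v}$ on Berkovich $\P^1_v$ admits universal lower bounds on mutual energy across all reduction types of $E_{t_1}, E_{t_2}$, and that these bounds recombine adelically via the product formula -- and is the technical heart of the argument.
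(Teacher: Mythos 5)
Your plan correctly identifies the ambient structure (Fili's mutual-energy form of the pairing, non-negativity of the local terms, the product formula as the source of global rigidity), but it misses the key reduction that makes the proof go through, and two of the outlined steps would not work as stated. The paper does \emph{not} attempt a direct ``universal lower bound on local mutual energy across all reduction types''; indeed such a bound is false, since for $t_2$ close to $t_1$ every local energy $E_v(t_1,t_2)$ can simultaneously be made small at all the places one initially looks at. Instead the paper first establishes the asymptotic bound $\hat{h}_{t_1}\cdot\hat{h}_{t_2}\ge \alpha\, h(t_1,t_2)-\beta$ (Theorem~\ref{pairingasymptotics}), and then derives Theorem~\ref{uniformlowerbound} by contradiction: if the pairing could be made arbitrarily small, then at a positive proportion of places (weighted by $r_v$) the local energies would be small, which by the local estimates forces, at each such place, one of the four degeneracies $|t_1-t_2|_v<\delta$, $\min\{|t_i|_v\}<\delta$, $\min\{|t_i-1|_v\}<\delta$, or $\max\{|t_i|_v\}>1/\delta$. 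Pigeonholing over these four options and applying the product formula then forces $h(t_1,t_2)\to\infty$, contradicting Theorem~\ref{pairingasymptotics}. This intermediate asymptotic result is the load-bearing ingredient entirely absent from your sketch.

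Two of your specific steps are also incorrect as stated. First, in the ``diagonal regime,'' a small $|t_1-t_2|_{v_0}$ at an archimedean place does \emph{not} produce a single non-archimedean place with a compensating large local energy: the non-archimedean energy (Theorem~\ref{non-arch energy}) is controlled by $|t_1/t_2|_v$, $|t_1|_v$, $|t_2|_v$, not by $|t_1-t_2|_v$, so a large $|t_1-t_2|_v$ at a finite place need not give any lower bound on $E_v$. Second, the claim that ``the archimedean puncture regime is subsumed by the compactness argument after a change of Legendre coordinate'' is wrong: if $(t_1,t_2)$ tends to $(0,1)$, or if both tend to the same cusp at controlled rates, no choice of Legendre symmetry $t\mapsto 1-t$ or $t\mapsto 1/t$ returns $(t_1,t_2)$ to a fixed compact set. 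Precisely this regime is the technical heart of the paper's Sections~\ref{archsection}--\ref{arch energies}, which use the hybrid-space degeneration estimates (following Favre and Boucksom--Jonsson) to extract the asymptotic constants $\alpha,\beta$. Your proposal concedes this is ``the technical heart'' but offers no mechanism for it, so there is a genuine gap: the proof of Theorem~\ref{pairingasymptotics}, or some equivalent quantitative statement, is missing.
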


\noindent
We also prove an asymptotic lower bound for parameters $t_1$ and $t_2$ with large height:

\begin{theorem} \label{pairingasymptotics} There exist constants $\alpha, \beta > 0$ such that 
$$\hat{h}_{t_1} \cdot \hat{h}_{t_2} \; \geq \; \alpha \, h(t_1, t_2) - \beta$$
for all $t_1 \not= t_2$ in $\overline{\mathbb{Q}} \setminus \{0,1\}$.  Here $h(t_1, t_2)$ is the naive logarithmic height on $\mathbb{A}^2(\Qbar)$.  
\end{theorem}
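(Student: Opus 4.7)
The plan is to decompose $\hat h_{t_1}\cdot\hat h_{t_2}$ as a sum of local contributions and bound each from below in terms of the corresponding local factor of $h(t_1,t_2)$. Fix a number field $K$ containing $t_1,t_2$, and recall that $\hat h_{t_i}$ is the canonical height of the Lattes map $L_{t_i}$ on $\P^1$ induced by doubling on $E_{t_i}$. The underlying adelic metrics have local curvatures $\mu_{t_i,v}$, the equilibrium measures of $L_{t_i}$, so by the Chambert-Loir formula one may write
$$\hat h_{t_1}\cdot \hat h_{t_2}\;=\;\frac{1}{[K:\Q]}\sum_v n_v\,\lambda_v(t_1,t_2),$$
where the local pairings $\lambda_v(t_1,t_2)\geq 0$ vanish at places of common good reduction for which the two Lattes maps agree modulo $v$.

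At a non-archimedean $v$, the curve $E_{t_i}$ has multiplicative reduction exactly when one of $|t_i|_v$, $|t_i|_v^{-1}$, or $|t_i-1|_v^{-1}$ exceeds $1$; in this regime the measure $\mu_{t_i,v}$ on the Berkovich projective line is supported on a segment whose combinatorial length is dictated by the Tate-curve uniformization of $E_{t_i}$. A direct computation of $\lambda_v$ in this tree model, together with the elementary estimate at places of good reduction where $t_1\not\equiv t_2$, yields
$$\lambda_v(t_1,t_2)\;\geq\; c\bigl(\log^+|t_1|_v + \log^+|t_2|_v\bigr)-O(1)_v,$$
with error terms summable over the finite places.

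The archimedean estimate, which I expect to be the main obstacle, is subtler because $\mu_{t_i,v}$ has full support on $\P^1(\C)$. Using the Weierstrass uniformization $E_{t_i}\cong\C/\Lambda_{t_i}$ together with Kronecker's formula for the local N\'eron--Tate height on $E_{t_2}$ in terms of the $\sigma$-function, $\lambda_v(t_1,t_2)$ can be rewritten as an explicit double integral on $(\C/\Lambda_{t_1})\times(\C/\Lambda_{t_2})$, where a point of $\C/\Lambda_{t_1}$ is transferred to $\C/\Lambda_{t_2}$ by matching $x$-coordinates. The key analytic step is to show that as $\operatorname{Im}(\tau_{t_i})\to\infty$, i.e.\ as $t_i$ approaches a cusp (equivalently $|t_i|_v\to\infty$), this integral grows like $c\log^+|t_i|_v$. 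I would derive this by expanding the $\sigma$-function as a $q$-series, isolating the leading contribution from the lattice period, and patching with continuity of $\lambda_v$ away from the cuspidal locus to produce a uniform lower bound.

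Summing the resulting inequalities over all places and comparing with the decomposition $h(t_1,t_2)=[K:\Q]^{-1}\sum_v n_v\max(0,\log|t_1|_v,\log|t_2|_v)$ produces constants $\alpha,\beta>0$ with the desired inequality. The principal difficulty is the uniform archimedean control: verifying that the $\sigma$-function integral really does grow at the claimed rate across the full parameter space, with no unanticipated cancellation near the cusps.
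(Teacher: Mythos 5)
Your proposed local estimate
$$\lambda_v(t_1,t_2)\;\geq\; c\bigl(\log^+|t_1|_v + \log^+|t_2|_v\bigr)-O(1)_v$$
is false, and this is the central gap. The local energy $E_v(t_1,t_2)$ vanishes precisely when $\mu_{t_1,v}=\mu_{t_2,v}$, and more to the point it is \emph{small} whenever $t_1$ and $t_2$ are $v$-adically close, regardless of how large $|t_1|_v$ and $|t_2|_v$ are. For instance, the paper's non-archimedean formula (Theorem \ref{non-arch energy}) gives, when both $|t_1|_v,|t_2|_v>1$,
$$E_v(t_1,t_2)\;\asymp\;\frac{\log^2|t_1/t_2|_v}{\log\max\{|t_1|_v,|t_2|_v\}},$$
which tends to $0$ when $|t_1|_v=|t_2|_v$ even if both exceed $p^{100}$. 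The archimedean story is the same: Theorem \ref{all b} shows $E_\infty(s,t)\approx\frac{1}{6}(1-1/b)^2\log|s|^{-1}$ with $b=\log|s|/\log|t|$, which degenerates as $b\to 1$, i.e.\ when $s$ and $t$ approach the cusp at comparable rates. So your $\sigma$-function/$q$-expansion computation, if carried out correctly, would produce a bound controlled by the discrepancy $|\log|t_1/t_2|_v|$ (or a transformed variant), not by $\log^+|t_i|_v$ alone, and the subsequent summation over places would not yield $\alpha\,h(t_1,t_2)-\beta$.

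What the paper does differently, and what your argument is missing, is a mechanism to convert the global hypothesis $t_1\neq t_2$ into lower bounds at enough places. Each local energy is bounded below by a function of $|\log|t_1/t_2|_v|$; the paper then partitions $M_K$ into ``good'' places where $|\log|t_1/t_2|_v|\geq r\cdot\max\{|\log|t_1|_v|,|\log|t_2|_v|\}$ and ``bad'' places where it is not, and shows the good contribution dominates. Summing $|\log|t_1/t_2|_v|$ over all places gives $2h(t_2/t_1)$ by the product formula, but $h(t_2/t_1)$ by itself need not be comparable to $h(t_1,t_2)$ (e.g.\ $t_2/t_1$ could be a root of unity). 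The extra ingredient is Proposition \ref{one sing}, which says $E_v(t_1,t_2)=E_v(1-t_1,1-t_2)$, so the same estimate applies to $(1-t_1)/(1-t_2)$; and Lemma \ref{height bound}, which uses the birational map $(t_1,t_2)\mapsto(t_2/t_1,(1-t_2)/(1-t_1))$ of $\P^2$ to show $h(t_2/t_1)+h((1-t_2)/(1-t_1))\geq\frac{1}{2}h(t_1,t_2)-C$. Without some such device exploiting $t_1\neq t_2$, the local-to-global step of your plan cannot close, because the pointwise-at-each-place lower bound you want simply does not hold.
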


\noindent
We find an upper bound that depends on the number of common zeroes of $\hat{h}_{t_1}$ and $\hat{h}_{t_2}$ as well as the heights of the parameters $t_1$ and $t_2$:

\begin{theorem} \label{boundingN} For all $\eps > 0$, there exists a constant $C(\eps) > 0$ such that
$$\hat{h}_{t_1} \cdot \hat{h}_{t_2} \leq \left( \eps + \frac{C(\eps)}{N(t_1, t_2)} \right) (h(t_1, t_2) + 1)$$
for all $t_1$ and $t_2$ in $\overline{\mathbb{Q}} \setminus \{0, 1\}$, where $N(t_1, t_2) := |\pi(E_{t_1}^{\mathit{tors}}) \cap \pi(E_{t_2}^{\mathit{tors}})|$.
\end{theorem}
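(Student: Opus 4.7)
The plan is to apply the quantitative equidistribution theorem of Favre and Rivera-Letelier \cite{FRL:equidistribution} to the common-zero set $S := \pi(E_{t_1}^{\mathit{tors}}) \cap \pi(E_{t_2}^{\mathit{tors}})$, viewing it as a test sample against the local equilibrium measures of $\hat{h}_{t_1}$ and $\hat{h}_{t_2}$. Fix local potentials $g_{t_j,v}$ for $\hat{h}_{t_j}$ at each place $v$ of $K := \Q(t_1,t_2)$, normalized by $\int g_{t_j,v}\,d\mu_{t_j,v} = 0$, where $\mu_{t_j,v}$ is the local equilibrium measure. The adelic formula for the Arakelov--Zhang pairing reads
$$
\hat{h}_{t_1}\cdot\hat{h}_{t_2} \;=\; \sum_v \int_{\P^1_{\C_v}} g_{t_1,v}\,d\mu_{t_2,v},
$$
and since every $x \in S$ is a zero of both heights (and $S$ is $\Gal(\Kbar/K)$-invariant), the adelic average $\tfrac{1}{N}\sum_{x \in S} \hat{h}_{t_1}(x) = \sum_v \tfrac{1}{N}\sum_{x \in S} g_{t_1,v}(x)$ vanishes. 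Subtracting yields
$$
\hat{h}_{t_1}\cdot\hat{h}_{t_2} \;=\; \sum_v \int g_{t_1,v}\,d\bigl(\mu_{t_2,v} - [S]_v\bigr),
$$
where $[S]_v$ denotes the empirical probability measure on $S$ at place $v$.

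At each place, the quantitative equidistribution inequality of \cite{FRL:equidistribution}, applied after mollifying $g_{t_1,v}$ at a scale $\epsilon > 0$, gives
$$
\Bigl|\int g_{t_1,v}\,d(\mu_{t_2,v} - [S]_v)\Bigr| \;\leq\; \epsilon\,\|g_{t_1,v}\|_v + \frac{C_v(\epsilon)}{N},
$$
where $\|\cdot\|_v$ is an appropriate $v$-adic energy norm, and $C_v(\epsilon) > 0$ at only finitely many places: the archimedean place(s) and the places of bad reduction for $E_{t_1}$ or $E_{t_2}$. Summing over $v$ reduces the task to bounding both $\sum_v \|g_{t_1,v}\|_v$ and $\sum_v C_v(\epsilon)$ by a constant multiple of $h(t_1,t_2) + 1$.

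Both bounds follow from explicit local analysis of the N\'eron--Tate Green function on $E_{t_1}$. At a non-archimedean place, $g_{t_1,v}$ is a piecewise-linear function on the Berkovich tree whose slopes are controlled by $v$-adic valuations of $t_1$, $t_1 - 1$, and the discriminant of the Weierstrass model; the product formula converts the adelic sum of these local contributions into $O(h(t_1) + 1) \leq O(h(t_1,t_2) + 1)$. At the archimedean place one uses uniform continuity of $g_{t_1,v}$ for $t_1$ in a fundamental region, together with a rescaling analysis near the cusps $t = 0, 1, \infty$ of the Legendre family. Combining these gives the claimed inequality. The main obstacle is making the local equidistribution estimate genuinely uniform in $t_1$ across all places of bad reduction, while arranging that the $\epsilon$-term stays linear in $h(t_1,t_2)$: the mollification scale must be chosen so that the Berkovich discrepancy is fully absorbed into the $C_v(\epsilon)/N$ correction and does not leak into the leading $\epsilon\,\|g_{t_1,v}\|_v$ term, a delicate balancing act that forces $C(\epsilon)$ to blow up as $\epsilon \to 0$.
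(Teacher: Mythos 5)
Your broad plan — regularize the common small-point set against the local equilibrium measures and invoke a quantitative equidistribution bound, with the archimedean uniformity as the hard part — matches the paper's, but two of your steps have genuine gaps that keep this from being a proof.

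First, the source of the $1/N$ factor. You propose to mollify the \emph{potential} $g_{t_1,v}$ and claim a bound of the form $\bigl|\int g_{t_1,v}\,d(\mu_{t_2,v}-[S]_v)\bigr| \leq \eps\,\|g_{t_1,v}\|_v + C_v(\eps)/N$. No such inequality follows from \cite{FRL:equidistribution} as stated; in FRL- and Fili-type estimates, the $1/N$ emerges from regularizing the \emph{discrete measure} $[S]_v$ to a measure $m_{S,\eta,v}$ supported on circles (or Type II Berkovich points) of radius $\eta_v$, bounding the regularized self-energy by $(m_{S,\eta,v},m_{S,\eta,v})_v \leq ([S]_v,[S]_v)_v + (-\log\eta_v)/|S|$, and then — crucially — invoking the product formula to show $\sum_v r_v\,([S]_v,[S]_v)_v = 0$ (this is \eqref{finite sets} in the paper). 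Without that cancellation of the discrete self-energy across places, there is no $1/N$ decay at all; the regularization error alone does not shrink with $N$. Your proposal never touches the pairwise differences of points in $S$ or the product formula in this role, so the claimed $C_v(\eps)/N$ term is unsupported. Relatedly, note $\infty$ must be removed from $S$ before any of this makes sense, since $g_{t_1,v}$ has a logarithmic singularity there.

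Second, the archimedean uniformity, which you flag as "the main obstacle," is acknowledged but not addressed. A "rescaling analysis near the cusps" is not an argument; the potentials degenerate as $t\to 0,1,\infty$, and controlling the modulus of continuity uniformly in $t$ is exactly what requires the hybrid-space degeneration machinery of Sections \ref{archsection} and \ref{arch energies}. The concrete output you need is Proposition \ref{arch regularization}: for $\eta_v \leq c(\eps')\min\{|t|_v^2,|t-1|_v^2,|t|_v^{-2}\}$, the regularization error is at most $\eps'\max\{\log|t|_v^{-1},\log|t-1|_v^{-1},\log|t|_v,1\}$. The fact that $\eta_v$ can be taken to shrink only quadratically in the distance to the cusps is precisely what lets $\sum_v -r_v\log\eta_v$ be bounded by $O(h(t_1,t_2)+1)$ while keeping the error linear in $\eps'$; this is a substantial result, not something uniform continuity on a fundamental domain gives you. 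Finally, a structural remark: the paper sidesteps a place-by-place Cauchy--Schwarz by introducing the adelic height $h_{F,\eta}$ built from the regularized empirical measures and applying Fili's triangle inequality for $\dist(h_1,h_2)=(h_1\cdot h_2)^{1/2}$, which handles $\mu_{t_1}$ and $\mu_{t_2}$ symmetrically in one stroke; if you pursue your direct decomposition, you will end up having to reconstruct essentially that computation anyway.
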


\noindent
The three theorems combine to give a uniform bound on the number $N(t_1, t_2)$ of common zeroes of $\hat{h}_{t_1}$ and $\hat{h}_{t_2}$ for all $t_1 \not= t_2$ in $\Qbar\setminus\{0,1\}$.

Theorems \ref{uniformlowerbound} and \ref{pairingasymptotics} follow from estimates on the local height functions and the local equilibrium measures on the $v$-adic Berkovich projective line at each place $v$ of a number field containing $t_1$ and $t_2$, computed using the dynamical Latt\`es map $f_t: \mathbb{P}^1 \rightarrow \mathbb{P}^1$ induced by multiplication by $2$ on a Legendre curve $E_t$.  The non-archimedean contributions to $\hat{h}_{t_1}\cdot \hat{h}_{t_2}$ turn out to be straightforward to compute for these heights.  Significant technical issues arise when $v$ is archimedean and both parameters $t_i$ are tending to the singularity set $\{0,1,\infty\}$ for this family; we resolve these issues by appealing to the theory of degenerations of complex dynamical systems on $\P^1(\C)$, in which a family of complex rational maps degenerates to a non-archimedean dynamical system acting on a Berkovich space, as in the work of DeMarco-Faber \cite{DF:degenerations} and Favre \cite{Favre:degenerations}, using the formalism of hybrid space as discussed by Boucksom-Jonsson in \cite{Boucksom:Jonsson:hybrid}.  

For Theorem \ref{boundingN}, we expand upon the quantitative equidistribution results of Favre-Rivera-Letelier \cite{FRL:equidistribution} and Fili \cite{Fili:energy} to analyze the rates of convergence of measures supported on finite sets of zeroes of a height $h$ to the associated equilibrium measures at each place $v$.  To do so requires control on the modulus of continuity of the local heights, and again we rely on estimates from the hybrid space to treat the cases where a parameter $t$ is tending to one of the singularities for the family $E_t$.

Although Theorem \ref{uniformlowerbound} alone was not enough to prove Theorem \ref{Legendretheorem}, it implies a uniform bound of a different sort, when combined with Zhang's inequality on the essential minimum of a height function \cite{Zhang:adelic}:

\begin{prop}  \label{uniformZhang}
Choose any $b$ satisfying $0 < b < \delta/2$ for the $\delta$ of Theorem \ref{uniformlowerbound}.  
Then the set 
	$$S(b, t_1, t_2) := \{ x \in \P^1(\Qbar):  \hat{h}_{t_1}(x) + \hat{h}_{t_2}(x) \leq b\}$$
is finite for each pair $t_1 \not= t_2 \in \Qbar\setminus\{0,1\}$.
\end{prop}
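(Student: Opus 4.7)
The plan is to apply Zhang's inequality on the essential minimum to the combined height $h := \hat{h}_{t_1} + \hat{h}_{t_2}$. This $h$ is the height associated to the tensor product of the two adelic semipositive metrized line bundles underlying $\hat{h}_{t_1}$ and $\hat{h}_{t_2}$, giving a continuous semipositive adelic metric on $\mathcal{O}_{\P^1}(2)$. The goal is to bound the essential minimum $e(h) = \sup_{F} \inf_{x\notin F} h(x)$ (with $F$ ranging over finite subsets of $\P^1(\Qbar)$) from below by $\delta/2$, for then $b < e(h)$ forces $S(b, t_1, t_2)$ to be not Zariski dense in $\P^1$, hence finite.

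The key computation is the self-pairing of $h$. Expanding bilinearly,
$$h \cdot h \; = \; \hat{h}_{t_1} \cdot \hat{h}_{t_1} \; + \; 2\, \hat{h}_{t_1} \cdot \hat{h}_{t_2} \; + \; \hat{h}_{t_2} \cdot \hat{h}_{t_2}.$$
Because each $\hat{h}_{t_i}$ is the canonical height for the Latt\`es map $f_{t_i}$, the normalization discussed in Section~\ref{background} (under which $h_1 \cdot h_2 \geq 0$ with equality if and only if $h_1 = h_2$) yields the vanishing $\hat{h}_{t_i} \cdot \hat{h}_{t_i} = 0$. Combined with Theorem \ref{uniformlowerbound}, this gives
$$h \cdot h \; = \; 2\, \hat{h}_{t_1} \cdot \hat{h}_{t_2} \; \geq \; 2\delta.$$

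Zhang's theorem on the essential minimum of a semipositive adelic height on $\P^1$ \cite{Zhang:adelic} then yields
$$e(h) \; \geq \; \frac{h \cdot h}{2 \cdot \deg \mathcal{O}_{\P^1}(2)} \; = \; \frac{\hat{h}_{t_1} \cdot \hat{h}_{t_2}}{2} \; \geq \; \frac{\delta}{2}.$$
Since by hypothesis $b < \delta/2 \leq e(h)$, the sublevel set $S(b, t_1, t_2)$ is not Zariski dense in $\P^1$, hence finite. Beyond Theorem \ref{uniformlowerbound}, the only ingredients are the standard bilinear expansion of the Arakelov--Zhang pairing, the vanishing of the self-pairing of a canonical height, and Zhang's inequality itself; there is no substantial obstacle.
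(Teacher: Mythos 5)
Your proof is correct and takes essentially the same route as the paper: both apply Zhang's essential-minimum inequality to a semipositive adelic metric built from the two canonical metrics, use bilinearity of the arithmetic intersection pairing together with $\hat{h}_{t_i}\cdot\hat{h}_{t_i}=0$ to compute the self-pairing as a multiple of $\hat{h}_{t_1}\cdot\hat{h}_{t_2}$, and then invoke Theorem \ref{uniformlowerbound}. The only cosmetic difference is that the paper works with the metric $(\|\cdot\|_1\|\cdot\|_2)^{1/2}$ on $\mathcal{O}_{\P^1}(1)$, giving the height $\tfrac12(\hat{h}_{t_1}+\hat{h}_{t_2})$, whereas you use the tensor-product metric on $\mathcal{O}_{\P^1}(2)$, giving $\hat{h}_{t_1}+\hat{h}_{t_2}$ directly; these differ by a trivial rescaling and produce the same threshold $\delta/2$.
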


The complete proof of Theorem \ref{Legendretheorem}, however, gives more:  we obtain a uniform bound on the size of the set $S(b,t_1, t_2)$ defined in Proposition \ref{uniformZhang} (see Theorem \ref{LegendreBogomolov}). This in turn provides a uniform version of the Bogomolov Conjecture for the associated family of genus 2 curves.  The Bogomolov Conjecture was proved for each individual curve $X$ over $\Qbar$ in \cite{Ullmo:Bogomolov, Zhang:Bogomolov}.  To state our result precisely, we fix ample and symmetric line bundles on the family of Jacobians $J(X)$ for the genus 2 curves $X$ defined over $\Qbar$ that we consider in Theorem \ref{MMtheorem}.  Specifically, we take $L_X = \Phi^* L_D$ for the isogeny $\Phi: J(X) \to E_1\times E_2$ of Proposition \ref{surface}, with $L_D$ the line bundle associated to the divisor $D = \{O_1\}\times E_2 + E_1 \times \{O_2\}$, where $O_i$ is the identity element of $E_i$.  

\begin{theorem} \label{uniformBogomolov}
There exist constants $B$ and $b>0$ such that 
		$$\big| \{x \in j_P(X)(\Qbar) : \hat{h}_{L_X}(x) \leq b\} \big| \; \leq \; B$$
for all smooth curves $X$ over $\Qbar$ of genus 2 admitting a degree-two map to an elliptic curve and all Weierstrass points $P$ on $X$, where $\hat{h}_{L_X}$ is the N\'eron-Tate canonical height on the Jacobian $J(X)$.
\end{theorem}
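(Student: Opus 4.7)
The plan is to reduce Theorem \ref{uniformBogomolov} to a uniform upper bound on $|S(b, t_1, t_2)|$, where $t_1, t_2$ are the Legendre parameters of the two elliptic quotients of $X$, and then to establish that uniform bound by extending the quantitative equidistribution argument underlying Theorem \ref{boundingN} from common torsion points to common points of small but positive height.

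For the geometric reduction, the bielliptic structure of $X$ provides two elliptic quotients $\phi_i \colon X \to E_i$ together with the isogeny $\Phi \colon J(X) \to E_1 \times E_2$ of Proposition \ref{surface}. Since $P$ is a Weierstrass point, each $\phi_i(P) \in E_i[2]$; after placing each $E_i$ in Legendre form $E_{t_i}$, the composition $\pi_i \circ \phi_i$ factors through the hyperelliptic quotient $\pi \colon X \to \P^1$ up to a coordinate change on the target. Because $L_X = \Phi^* L_D$ with $L_D = \{O_1\} \times E_2 + E_1 \times \{O_2\}$, functoriality of the N\'eron-Tate height yields
$$\hat{h}_{L_X}(j_P(x)) \;=\; \hat{h}_{t_1}(\pi(x)) + \hat{h}_{t_2}(\pi(x))$$
for every $x \in X(\Qbar)$. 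Since $\pi$ has degree two, the set in Theorem \ref{uniformBogomolov} has at most $2\,|S(b, t_1, t_2)|$ elements, so it suffices to bound $|S(b, t_1, t_2)|$ uniformly over all pairs $t_1 \neq t_2$ in $\Qbar \setminus \{0, 1\}$.

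To produce such a bound, I would establish the following strengthening of Theorem \ref{boundingN}: for every $\eps > 0$ there exist $C(\eps), C' > 0$ such that, whenever $S \subset \P^1(\Qbar)$ is a Galois-stable set of $N$ distinct points each satisfying $\hat{h}_{t_1}(x) + \hat{h}_{t_2}(x) \leq b$, one has
$$\hat{h}_{t_1} \cdot \hat{h}_{t_2} \;\leq\; \Bigl(\eps + \tfrac{C(\eps)}{N}\Bigr)\bigl(h(t_1, t_2) + 1\bigr) + C' b.$$
The proof is the same Favre-Rivera-Letelier/Fili empirical-measure discrepancy computation underlying Theorem \ref{boundingN}, applied to $\mu_S = \tfrac{1}{N}\sum_{x \in S} \delta_x$ at each place $v$, with the one new observation that the local contributions of $\mu_S$ to $\hat{h}_{t_1} \cdot \hat{h}_{t_2}$ now differ from their ideal height-zero values by at most the averaged local heights of the points of $S$, which sum globally to at most $b$. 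Combining this inequality with Theorem \ref{uniformlowerbound} when $h(t_1, t_2)$ is bounded, and with Theorem \ref{pairingasymptotics} when $h(t_1, t_2)$ is large, and then choosing $b$ so that $C' b < \delta/2$, forces $N$ to be uniformly bounded and delivers the constant $B$.

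The main obstacle is verifying that the modulus-of-continuity and hybrid-space degeneration estimates from Section \ref{archsection} used in the proof of Theorem \ref{boundingN} remain valid after replacing height-zero points by points of small positive height, uniformly as $t_1, t_2$ approach the singular locus $\{0, 1, \infty\}$. Once this robustness is confirmed, the rest of the argument is a direct combination of the three pairing bounds already proved in the paper, together with the geometric identification above.
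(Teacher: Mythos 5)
Your proposal follows the same route as the paper: reduce to the Legendre parameters via Proposition \ref{surface} and functoriality of canonical heights, then bound $|S(b,t_1,t_2)|$ uniformly by combining Theorems \ref{uniformlowerbound} and \ref{pairingasymptotics} with a small-height extension of Theorem \ref{boundingN} — which is precisely Theorem \ref{boundingNb} as stated and proved in Section \ref{trianglesection}, and the uniform bound on $|S(b,t_1,t_2)|$ is Theorem \ref{LegendreBogomolov}, whose proof the paper carries out exactly along the lines you sketch. The only slip is a missing factor of $\tfrac12$ in your identity $\hat h_{L_X}(j_P(x))=\hat h_{t_1}(\pi(x))+\hat h_{t_2}(\pi(x))$ (the paper has $\hat h_{E_t}(P)=\tfrac12\hat h_t(\pi(P))$), but since $b$ is at your disposal this is harmless.
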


Finally, we mention that we implement this general strategy towards uniform boundedness in a follow-up article \cite{DKY:quadpoly} in another setting, providing a uniform bound on the number of common preperiodic points for distinct polynomials of the form $f_c(z) = z^2+c$ with $c\in \C$.

\begin{remark}
We have chosen to work with the Arakelov-Zhang pairing $\hat{h}_{t_1} \cdot \hat{h}_{t_2}$ to measure proximity of the two height functions, with $t_1 \not= t_2$ in $\Qbar\setminus\{0,1\}$, but there are other choices we could have made.  For example, Kawaguchi and Silverman in \cite{Kawaguchi:Silverman:pairing} study
	$$\delta\left(\hat{h}_{t_1},\hat{h}_{t_2}\right) := \sup_{x \in \P^1(\Qbar)} \left|\hat{h}_{t_1}(x) - \hat{h}_{t_2}(x) \right|.$$
It turns out that the two quantities are comparable for this family of heights.  The upper bound $\hat{h}_{t_1} \cdot \hat{h}_{t_2} \leq \; \delta\left(\hat{h}_{t_1},\hat{h}_{t_2}\right)$ can be seen as a corollary of arithmetic equidistribution and \eqref{equilimit}, and therefore holds for any pair of normalized heights coming from continuous, semipositive adelic metrics on $\mathcal{O}_{\P^1}(1)$.  A lower bound of the form $\hat{h}_{t_1} \cdot \hat{h}_{t_2} \geq C_1 \; \delta\left(\hat{h}_{t_1},\hat{h}_{t_2}\right) - C_2$ for real constants $C_1, C_2$, and for all $t_1 \not= t_2$ in $\Qbar\setminus\{0,1\}$, is a consequence of Theorem \ref{pairingasymptotics}, when combined with \cite[Theorem 1]{Kawaguchi:Silverman:pairing}.  However, such a lower bound does not hold for all pairs of heights coming from metrics on $\mathcal{O}_{\P^1}(1)$.  A comparison of these two pairings is addressed further in \cite{DKY:quadpoly}, for the canonical heights associated to morphisms of $\P^1$.
\end{remark}

\medskip\noindent
{\bf Outline of the paper.} We fix our notation and provide background in Section \ref{background}.   Sections \ref{nonarchsection},  \ref{archsection}, and \ref{arch energies} provide the estimates on local height functions and local measures needed to prove all of our theorems.  Theorem \ref{pairingasymptotics} is proved in Section \ref{pairingsection}, and from it we deduce Theorem \ref{uniformlowerbound} and Proposition \ref{uniformZhang}.  A generalization of Theorem \ref{boundingN} is proved in Section \ref{trianglesection} which treats points of small height, not only of height 0.  We prove Theorem \ref{Legendretheorem} in Section \ref{Legendresection} and finally Theorems \ref{MMtheorem} and \ref{uniformBogomolov} in Section \ref{MMtheoremsection}.  

\medskip\noindent
{\bf Acknowledgements.}  We thank the American Institute of Mathematics, where the initial work for this paper took place as part of an AIM SQuaRE.  Special thanks go to Ken Jacobs, Mattias Jonsson, Curt McMullen, and Khoa Nguyen for helpful discussions.  We would also like to thank the anonymous referees for their many suggestions and careful reading of the article.  During the preparation of this paper, L. DeMarco was supported by the National Science Foundation (DMS-1600718), H. Krieger was supported by Isaac Newton Trust (RG74916), and H. Ye was partially supported by ZJNSF (LR18A010001) and NSFC (11701508).

\bigskip
\section{Heights, measures, and energies}
\label{background}

This section develops the background and notation needed for the proofs that follow.  Throughout, $K$ is a number field and $M_K$  its set of places.

\subsection{The canonical height} \label{canonical height}
Fix $t\in \Qbar\setminus\{0,1\}$.  Let $E_t$ be the Legendre elliptic curve and $\pi: E_t\to \P^1$ the projection defined by $\pi(x,y) = x$.  The multiplication-by-two endomorphism on $E_t$ descends via $\pi$ to a morphism of degree 4 on $\P^1$ given by
\begin{equation} \label{Lattes map}
	f_t(x)=\frac{(x^2-t)^2}{4x(x-1)(x-t)}.
\end{equation} 
The canonical height on the elliptic curve 
	$$\hat{h}_{E_t} : E_t(\Qbar) \to \R$$
can be defined via the projection $\pi$ and the iteration of $f_t$ as $h_{E_t}(P):= \frac12 \hat{h}_t(\pi(P))$
where
	$$\hat{h}_t : \P^1(\Qbar) \to \R$$ 
is the dynamical canonical height defined by 
\begin{equation} \label{dynamical canonical height}
	\hat{h}_t(x) = \lim_{n\to\infty}  \frac{1}{4^n}  h(f_t^n(x)).
\end{equation}
Here, $h$ is the (logarithmic) Weil height on $\P^1(\Qbar)$.  Note that $\hat{h}_t(x) \geq 0$ for all $x \in \P^1(\Qbar)$, and
	$$\hat{h}_t(x) = 0 \iff x \in \pi(E_t^{\mathit{tors}})$$
\cite{Silverman:elliptic}, \cite{Call:Silverman}.

The height $\hat{h}_t$ has a local decomposition as follows:  for any number field $K$ containing $t$, and for each place $v \in M_K$, there exists a local height function $\lambda_{t,v}$ such that
$$\hat{h}_t(x) =  \sum_{v \in M_K} \frac{r_v}{|\Gal(\Kbar/K) \cdot x|} \sum_{y \in \Gal(\Kbar/K) \cdot x} \lambda_{t,v}(y) $$
for all $x \in \Kbar$, where 
	$$r_v := \frac{[K_v: \Q_v]}{[K:\Q]}.$$
The local heights $\lambda_{t,v}$ can be chosen to extend continuously to $\P^1(\C_v)\setminus\{\infty\}$, where $\C_v$ is the completion (w.r.t.~$v$) of an algebraic closure of the completion $K_v$, and to satisfy 
$$\lambda_{t,v}(x) = \log|x|_v + \mathcal{O}(1)$$ 
as $|x|_v \to \infty$.  

\subsection{Local heights and escape rates}
To compute the local heights, we will often express the maps $f_t: \P^1\to \P^1$ of \eqref{Lattes map} in homogeneous coordinates, as 
  $$F_t(z,w):=\left( (z^2-tw^2)^2, 4 zw(z-w)(z-tw) \right)$$
for $z$ and $w$ in $\C_v$.  As observed in \cite[Chapter 10]{BRbook}, its escape-rate function
\begin{equation} \label{escape rate}
  G_{F_t,v}(z,w):=\lim_{n\to \infty}\frac{1}{4^n} \log\|F_t^n(z,w)\|_v,
\end{equation}
where $\|(z,w) \|_v= \max\{|x|_v, |y|_v\}$, satisfies 
$$\hat{h}_t(x) = \sum_{v \in M_K} \frac{r_v}{|\Gal(\Kbar/K) \cdot \tilde{x}|} \sum_{\tilde{y} \in \Gal(\Kbar/K) \cdot \tilde{x}} G_{F_t,v}(\tilde{y})$$
for $x \in \P^1(\Kbar)$ and $\tilde{x}$ any choice of lift of $x$ to $\Kbar^2\setminus\{(0,0)\}$.  In particular, we may take 
\begin{equation} \label{local height definition}
	\lambda_{t,v}(x) = G_{F_t, v}(x,1)
\end{equation}
as a local height for $\hat{h}_t$.

The elliptic curves $E_t$ and $E_{1-t}$ and $E_{1/t}$ are isomorphic, with the following transformation formulas for the local heights:

\begin{prop} \label{G transformation}
Fix any number field $K$ and $v \in M_K$.  Then, for all $t\in K\setminus\{0,1\}$, we have 
	$$G_{F_{1-t}, v}(1-z, 1) = G_{F_t, v}(z,1) = G_{F_{1/t}, v}(z, t) = G_{F_{1/t}, v}(z/t, 1) + \log|t|_v.$$
\end{prop}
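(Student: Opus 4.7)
The plan is to derive all three equalities from two ingredients: (a) the standard isomorphisms $E_{1-t}\cong E_t$ and $E_{1/t}\cong E_t$, which descend to explicit semi-conjugacies between the Latt\`es maps $f_t, f_{1-t}, f_{1/t}$ on the $x$-line, and (b) the homogeneity of the escape-rate function,
\[
G_{F_t,v}(cz, cw) = \log|c|_v + G_{F_t,v}(z, w),
\]
which is immediate from $F_t^n(cz,cw) = c^{4^n}\, F_t^n(z,w)$ and the defining limit \eqref{escape rate}.

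For the first equality, I would verify the polynomial identity $f_{1-t}(1-x) = 1 - f_t(x)$, so that $\varphi(x) := 1-x$ conjugates $f_t$ to $f_{1-t}$. Choosing the homogeneous lift $\Phi(z,w) := (w-z, w)$ of $\varphi$, the same computation performed in two variables gives $F_{1-t}\circ\Phi = -\,\Phi\circ F_t$ on $\A^2$. Since $F_{1-t}$ has even degree, $F_{1-t}(-u) = F_{1-t}(u)$, and a short induction yields $F_{1-t}^n\circ\Phi = -\,\Phi\circ F_t^n$ for all $n\geq 1$. Taking $v$-adic norms kills the sign, and because $\Phi$ is a $K_v$-linear automorphism there exist constants $c_1,c_2>0$ with $c_1\|u\|_v \leq \|\Phi(u)\|_v \leq c_2\|u\|_v$ for all $u\in K_v^2$; dividing $\log\|F_{1-t}^n(\Phi(z,w))\|_v$ by $4^n$ and letting $n\to\infty$ absorbs the effect of $\Phi$ and yields $G_{F_{1-t},v}(\Phi(z,w)) = G_{F_t,v}(z,w)$, which at $w=1$ is the first claim.

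The case $t\mapsto 1/t$ is analogous but carries a nontrivial scaling factor. The isomorphism $(x,y)\mapsto (tx, t^{3/2} y)$ from $E_{1/t}$ to $E_t$ projects to $\varphi(x) := tx$, and one checks $f_t(tx) = t\,f_{1/t}(x)$. I would lift to $\Phi(z,w) := (tz, w)$ and compute directly the identity $F_t\circ\Phi = t^{3}\,\Phi\circ F_{1/t}$ on $\A^2$. Because $F_t$ is homogeneous of degree $4$, this relation telescopes under iteration: by induction $F_t^n\circ\Phi = t^{a_n}\,\Phi\circ F_{1/t}^n$ with $a_{n+1} = 4a_n + 3$ and $a_1 = 3$, so $a_n = 4^n - 1$. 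Taking $v$-adic norms, dividing by $4^n$, and again absorbing the linear isomorphism $\Phi$ into an $O(4^{-n})$ error yields, in the limit,
\[
G_{F_t, v}(tz, w) = \log|t|_v + G_{F_{1/t},v}(z,w).
\]
Setting $w = 1$ and replacing $z$ by $z/t$ produces the rightmost equality $G_{F_t,v}(z,1) = \log|t|_v + G_{F_{1/t},v}(z/t,1)$ of the proposition.

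Finally, the middle equality is pure homogeneity: applying (b) with $c = 1/t$ and $(z,w) = (z,t)$ gives $G_{F_{1/t},v}(z/t, 1) = G_{F_{1/t},v}(z,t) - \log|t|_v$, and combining with the previous display closes the chain. The only real content of the argument is the two polynomial identities $F_{1-t}\circ\Phi = -\Phi\circ F_t$ and $F_t\circ\Phi = t^3\,\Phi\circ F_{1/t}$, together with the careful accounting of the scalar $t^{4^n-1}$ that arises under iteration; I do not foresee any serious obstacle.
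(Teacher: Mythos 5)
Your proof is correct and follows essentially the paper's approach: verify a semi-conjugacy between the homogeneous lifts, then let the linear change of coordinates wash out in the escape-rate limit. The only difference is in the $t\mapsto 1/t$ relation, where you use $\Phi(z,w)=(tz,w)$ and track the scalar $t^{4^n-1}$ accumulating under iteration, while the paper uses $B(z,w)=(z,tw)$, which intertwines $F_t$ and $F_{1/t}$ with no extra factor; since $\Phi = t\,B^{-1}$, the two identities are equivalent, with your version simply redistributing the $\log|t|_v$ term between the intertwining step and the homogeneity step.
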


\begin{proof}
Let $A$ be the automorphism $A(z,w) = (w-z, w)$.  Then 
	$$A\circ F_t^n  = - F_{1-t}^n \circ A$$
for all iterates, proving the first equality.  Similarly, let $B(z,w) = (z, t w)$.  Then
	$$B \circ F_t^n  = F_{1/t}^n \circ B$$
for all iterates, proving the second equality.  The final equality follows from the logarithmic homogeneity of $G$.
\end{proof}

\subsection{The Berkovich projective line} 
Let $K$ be a number field.  For each $v \in M_K$, let $\A^{1,an}_v$ denote the Berkovich affine line over $\C_v$.  For non-archimedean $v$, the points of $\A^{1,an}_v$ come in four types.  The Type I points in $\A_v^{1,an}$ are, by definition, the elements of the field $\C_v$.  The Type II points are in one-to-one correspondence with disks $D(a,r) = \{x \in \C_v:  |x-a|_v \leq r\}$ with $r >0$ rational, and these are the branch points for the underlying tree structure on $\A^{1,an}_v$.  The Type III points correspond to disks $D(a,r)$ with $r$ irrational.  (We will not need the Type IV points in this article.)  A Type II or III point corresponding to $D(a,r)$ will be denoted by $\zeta_{a, r}$.  The {\em Gauss point} $\zeta_{0,1}$ is the Type II point identified with the unit disk.  The Berkovich projective line $\P_v^{1,an}=\A_v^{1,an}\cup \{\infty\}$ is the one-point compactification of $\A_v^{1,an}$, which is a canonically-defined path-connected compact Hausdorff space containing $\P^1(\C_v)$ as a dense subspace.  If $v$ is archimedean, then $\C_v\simeq \C$ and $\P^{1,an}_v=\P^1(\C)$.

For each $v \in M_K$ there is a distribution-valued Laplacian operator $\Delta$ on $\P_v^{1,an}$. The function $\log ^+|z|_v$ on $\P^1(\C_v)$ extends naturally to a continuous real valued function $\P_v^{1,an}\to \R \cup \{\infty\}$, and the Laplacian is normalized such that
  $$\Delta \log ^+|z|_v=\omega_v - \delta_\infty$$ 
on $\P^{1,an}_v$, where $\omega_v = m_{S^1}$ is the Lebesgue probability measure on the unit circle when $v$ is archimedean, and $\omega_v = \delta_G$ is a point mass at the Gauss point of $\P^{1,an}_v$ when $v$ is non-archimedean.  A probability measure $\mu_v$ on $\P^{1,an}_v$ is said to have {\em continuous potentials} if $\mu_v-\omega_v=\Delta g$ with $g: \P^{1,an}_v\to \R$ continuous.  The function $g$ for $\mu_v$ is unique up to the addition of a constant.  See \cite[Chapter 5]{BRbook} for more details.  Note that the Laplacian used here is the negative of the one appearing in \cite{PST:pairing} and \cite{BRbook}, but agrees with the usual Laplacian (up to a factor of $2\pi$) at the archimedean places.

For $v$ non-archimedean, we set
   $$\Hyp:=\A_v^{1,an}\setminus \C_v.$$
The hyperbolic distance $d_{\mathrm{hyp}}$ on $\Hyp$ gives it the structure of a metrized $\R$-tree and satisfies 
   $$d_{\mathrm{hyp}}(\zeta_{a,r_1}, \zeta_{a,r_2}) =  \log (r_1/r_2) $$
for any $a\in \C_v$ and any $r_1 \geq r_2 > 0$.  We will say that a probability measure $\mu_v$ on $\Hyp$ is an {\em interval measure} if it is the uniform distribution on an interval $[\zeta_1, \zeta_2]\subset \Hyp$ with respect to the linear structure induced from the hyperbolic metric $d_{\mathrm{hyp}}$.

\subsection{Canonical measures and good reduction} \label{canonical measures}
For each Legendre elliptic curve $E_t$ with $t$ in a number field $K$ and each $v \in M_K$, the local height $\lambda_{t,v}$ of \eqref{local height definition} extends to define a continuous and subharmonic function on $\A_v^{1,an}$ with logarithmic singularity at $\infty$.  We have
   $$\Delta \lambda_{t, v}=\mu_{t,v} - \delta_\infty$$
on $\P_v^{1,an}$, where $\mu_{t, v}$ is the canonical probability measure for the dynamical system $f_t$ at $v$ \cite{FRL:equidistribution}, \cite[Theorem 10.2]{BRbook}.

For archimedean $v \in M_K$, the measure $\mu_{t,v}$ is the unique $f_t$-invariant measure on $\P^1(\C)$ achieving the maximal entropy $\log 4$.  It is the push-forward of the Haar measure on $E_t(\C)$ via the projection $\pi$ introduced in \S\ref{canonical height}.  See, for example, \cite{Milnor:Lattes} for a dynamical discussion of the maps $f_t$ on the Riemann sphere.

For non-archimedean $v \in M_K$, if the curve $E_t$ and the map $f_t$ have good reduction, the measure $\mu_{t,v}$ is the point mass $\delta_G$ supported on the Gauss point $\zeta_{0,1}$.  The map $f_t$ has potential good reduction, meaning that it has good reduction under a suitable change of coordinates on $\P^1$, if and only if the measure $\mu_{t,v}$ is supported at a single Type II point in $\Hyp$.  In general, the support of $\mu_{t,v}$ is equal to the Julia set of $f_t$ in $\P^{1,an}_v$.  

Recall that the $j$-invariant of the elliptic curve $E_t$ over $\mathbb{C}$ is given by
\begin{equation} \label{j invariant}
   j(t)=\frac{256(1-t+t^2)^3}{(1-t)^2t^2}.
\end{equation}
For $t \in K$ and non-archimedean $v \in M_K$, the map $f_t$ has potential good reduction at $v$ if and only if the curve $E_t$ has potential good reduction at $v$.  This equivalence can be proved via equidistribution of torsion points on $E_t$ at all places \cite[Theorem 1]{Baker:Petsche} (thus implying that the measure $\mu_{t,v}$ will also be supported at a single point of $\P^{1,an}_v$) or via a direct calculation showing that the Julia set of $f_t$ is a singleton if and only if $|j(t)|_v \leq 1$.

\subsection{The height as an adelic metric}
Suppose $t \in K \setminus \{0, 1\}$.  The height $\hat{h}_t$ on $\P^1(\Qbar)$, introduced in \S\ref{canonical height}, is induced from an adelic metric on $\mathcal{O}_{\P^1}(1)$, in the sense of Zhang \cite{Zhang:adelic}.   Fixing coordinates on $\P^1$ and a section $s$ of $\mathcal{O}_{\P^1}(1)$ with $(s) = (\infty)$, then a metric $\|\cdot\|_{t,v}$ can be defined at each place $v$ of $K$ by setting 
	$$-\log \|s(z)\|_{t,v} =  \lambda_{t,v}(z) = G_{F_t,v}(z,1),$$
for the function $G_{F_t,v}$ of \eqref{escape rate}.  The height $\hat{h}_t$ satisfies
	$$\hat{h}_t(x) = \sum_{v \in M_K} \frac{r_v}{|\Gal(\Kbar/K) \cdot x|} \sum_{y \in \Gal(\Kbar/K) \cdot x} (-\log \|s(y)\|_{t,v})$$
for all $x \not= \infty$ in $\P^1(\Qbar)$.  Writing $\lambda_{t,v}(z) = \log|z|_v + c_v + o(1)$ as $|z|_v \to \infty$ with a constant $c_v$ at each place $v$ of $K$, we may compute that
\begin{equation} \label{height at infinity}
	0 = \hat{h}_t(\infty) = \sum_{v \in M_K} r_v \, c_v,
\end{equation}
because $\infty$ is the projection of the origin of $E_t$.  

\subsection{The intersection pairing}
For these heights $\hat{h}_t$ coming from the Legendre family of elliptic curves, with $t \in \Qbar\setminus\{0,1\}$, we have 
\begin{equation} \label{unique heights}
	\hat{h}_{t_1} = \hat{h}_{t_2} \iff t_1 = t_2.
\end{equation}
Indeed, any height coming from an adelic metric on $\mathcal{O}_{\P^1}(1)$ is uniquely determined, up to an additive constant, by the associated curvature distributions; see, for example, the construction of a height function from the measures in \cite{FRL:equidistribution}.  For heights of the form $\hat{h}_t$, at each archimedean place $v$ of a number field containing $t$, the curvature distribution $\mu_{t,v}$ on $\P^1(\C)$ is the push-forward of the Haar measure on $E_t(\C)$ by $\pi$; it therefore has a greater density at the four branch points $\{0,1,t, \infty\}$ of $\pi$, and thus determines $t$.  

There is a well-defined intersection number between any pair of such heights, as in \cite{Zhang:adelic} (see also \cite{ChambertLoir:survey}); more precisely, it is the arithmetic intersection number of the two associated adelically metrized line bundles.  By the non-degeneracy of this height pairing and \eqref{unique heights},
\begin{equation} \label{non-degeneracy}
	\hat{h}_{t_1} \cdot \hat{h}_{t_2} \geq 0 \mbox{ with equality if and only if } t_1 = t_2,
\end{equation}
as computed in \cite{PST:pairing}.  

To define the pairing $\hat{h}_{t_1} \cdot \hat{h}_{t_2}$, we fix sections $s$ and $u$ of $\mathcal{O}_{\P^1}(1)$ such that their divisors do not intersect.  Given $t_1$ and $t_2$ in a number field $K$, and a place $v$ of $K$, we set
	$$\<\hat{h}_{t_1}, \hat{h}_{t_2}\>^{s,u}_v :=  \int \log\|s\|^{-1}_{t_1,v} \; \Delta(\log\|u\|^{-1}_{t_2,v}) = \< \hat{h}_{t_2}, \hat{h}_{t_1} \>^{u,s}_v.$$
The integral is over the Berkovich analytification $\P^{1,an}_v$ of $\P^1$, over the field $\C_v$.  The metrics satisfy 
	$$\Delta(\log\|s\|^{-1}_{t,v}) = \mu_{t,v} - \delta_{(s)},$$
and $\mu_{t,v}$ is the associated curvature distribution.  

The height pairing is then defined as 
\begin{equation} \label{height pairing definition}
\hat{h}_{t_1} \cdot \hat{h}_{t_2} := \hat{h}_{t_1}((u)) + \hat{h}_{t_2}((s)) +  \sum_{v \in M_K} r_v \; \<\hat{h}_{t_1}, \hat{h}_{t_2}\>^{s,u}_v
\end{equation}
which is independent of the choices of $s$ and $u$.  This pairing is easily seen to be symmetric, and since $\hat{h}_t(\infty) = 0$ for all $t$, it can be expressed as
\begin{eqnarray} \label{pairing with infinity}
\hat{h}_{t_1} \cdot \hat{h}_{t_2} &=& \hat{h}_{t_2}(\infty) + \sum_{v \in M_K} r_v \int (\log\|s\|^{-1}_{t_1,v}) \, d\mu_{t_2,v} \; = \; \sum_{v \in M_K} r_v \int \lambda_{t_1,v} \, d\mu_{t_2,v}  \nonumber \\
	&=&  \hat{h}_{t_1}(\infty) + \sum_{v \in M_K} r_v \int (\log\|s\|^{-1}_{t_2,v}) \, d\mu_{t_1,v} \; = \; \sum_{v \in M_K} r_v \int \lambda_{t_2,v} \, d\mu_{t_1,v}
\end{eqnarray}
when $(s) = (\infty)$.  

As $\hat{h}_t \cdot \hat{h}_t = 0$ for all $t \in \Qbar\setminus\{0,1\}$ from \eqref{non-degeneracy}, note that
\begin{equation}  \label{zero sum}
	\sum_{v \in M_K} r_v \int \lambda_{t,v} \, d\mu_{t,v} = 0 =   \sum_{v \in M_K} r_v \, c_v,
\end{equation}
by combining \eqref{pairing with infinity} and \eqref{height at infinity}.  The pairing can be rewritten as:
\begin{eqnarray} \label{pairing definition}
\hat{h}_{t_1} \cdot \hat{h}_{t_2}  &=&  
  \frac12 \left( \hat{h}_{t_2}(\infty) +  \hat{h}_{t_1}(\infty)  +  \sum_{v \in M_K} r_v  \left( \int \lambda_{t_1, v} \, d\mu_{t_2,v} + \int \lambda_{t_2, v} \, d\mu_{t_1,v} \right) \right)  \nonumber \\
  &=& 
   \frac12 \sum_{v \in M_K} r_v  \left(  \int \left(\lambda_{t_1, v} - \lambda_{t_2,v}\right) \, d\mu_{t_2,v} + \int \left( \lambda_{t_2, v} - \lambda_{t_1, v} \right) \, d\mu_{t_1,v} \right). 
\end{eqnarray}
The advantage of working with \eqref{pairing definition} is the following local version of the non-degeneracy property \eqref{non-degeneracy}:  

\begin{prop}  \cite[Propositions 2.6 and 4.5]{FRL:equidistribution} \label{local energy}
Let $K$ be a number field and $v \in M_K$.  For any $t_1, t_2 \in K \setminus\{0,1\}$, the local energy
	$$E_v(t_1, t_2) := \frac{1}{2}\left( \int \left(\lambda_{t_1, v} - \lambda_{t_2,v}\right) \, d\mu_{t_2,v} + \int \left( \lambda_{t_2, v} - \lambda_{t_1, v} \right) \, d\mu_{t_1,v}\right)$$
is non-negative; it is equal to $0$ if and only if $\mu_{t_1,v} = \mu_{t_2,v}$.  
\end{prop}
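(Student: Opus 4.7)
The plan is to rewrite the local energy as a Dirichlet-type pairing and then invoke the positivity of the Arakelov--Green energy on the Berkovich line. Set
$$g := \lambda_{t_1,v} - \lambda_{t_2,v}.$$
Since each $\lambda_{t_i,v}(z) = \log|z|_v + c_v + o(1)$ near $\infty$, the logarithmic singularities cancel and $g$ extends to a continuous bounded function on the whole of $\P^{1,an}_v$. From $\Delta \lambda_{t_i,v} = \mu_{t_i,v} - \delta_\infty$ we get
$$\Delta g = \mu_{t_1,v} - \mu_{t_2,v},$$
which is a signed measure of total mass zero.

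Next, I would rearrange the definition of $E_v(t_1,t_2)$ to pull $g$ out of both integrals:
$$E_v(t_1,t_2) = \tfrac12\!\int g\, d\mu_{t_2,v} - \tfrac12\!\int g\, d\mu_{t_1,v} = -\tfrac12\!\int g\, d(\Delta g).$$
By the Berkovich integration-by-parts formula of Baker--Rumely \cite{BRbook}, the right-hand side equals $\tfrac12$ of the Dirichlet energy of $g$, and can be rewritten as the double integral
$$E_v(t_1,t_2) = \tfrac12 \iint -\log \delta(x,y)_\zeta \, d(\mu_{t_1,v}-\mu_{t_2,v})(x)\, d(\mu_{t_1,v}-\mu_{t_2,v})(y),$$
against the Hsia kernel (the chordal kernel when $v$ is archimedean), with respect to a suitable base point $\zeta$. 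Because the measures $\mu_{t_i,v}$ have continuous potentials, this double integral is finite, and the ambiguity in the base point drops out since $\Delta g$ has total mass zero.

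Non-negativity of $E_v(t_1,t_2)$ then follows from positive semidefiniteness of the Arakelov--Green kernel on signed measures of mass zero, which is standard potential theory in the archimedean case and is the content of the Berkovich energy formalism developed in \cite{BRbook} and \cite{FRL:equidistribution} in the non-archimedean case. For the rigidity statement, if $E_v(t_1,t_2) = 0$ then the energy of the signed measure $\mu_{t_1,v}-\mu_{t_2,v}$ vanishes; since both measures have continuous potentials and the kernel is strictly positive definite on mass-zero signed measures with continuous potentials, this forces $\mu_{t_1,v} = \mu_{t_2,v}$. Conversely, if the two measures coincide then each of the two integrals defining $E_v(t_1,t_2)$ vanishes.

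The main obstacle is justifying the positivity and rigidity of the Dirichlet pairing at non-archimedean places, where the usual complex-analytic arguments are unavailable; this is exactly where one must appeal to the Berkovich potential theory in \cite{BRbook} and to the energy formalism of Favre and Rivera-Letelier cited in the statement, rather than redoing it here.
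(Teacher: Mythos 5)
Your argument is correct and mirrors what the paper actually does: the paper supplies no independent proof but, in \S\ref{Mutualenergy}, identifies $E_v(t_1,t_2)=\tfrac12\,(\mu_{t_1,v}-\mu_{t_2,v},\,\mu_{t_1,v}-\mu_{t_2,v})_v$ as the mutual-energy self-pairing and cites \cite[Propositions 2.6 and 4.5]{FRL:equidistribution} for its non-negativity and rigidity, exactly as you do after rewriting $E_v = -\tfrac12\int g\,d(\Delta g)$ with $g=\lambda_{t_1,v}-\lambda_{t_2,v}$. Your added checks (that $g$ extends continuously since the $\log|z|_v$ singularities cancel, and that the base-point ambiguity in the Hsia kernel drops out because $\Delta g$ has total mass zero) are the right sanity checks and are consistent with the paper's conventions.
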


\begin{prop}\label{one sing}
Let $v \in M_K$, and fix $t_1, t_2 \in K\setminus\{0,1\}$. We have 
  $$E_v(t_2, t_1)=E_v(t_1, t_2)=E_v(1-t_1, 1-t_2)=E_v(1/t_1, 1/t_2).$$
\end{prop}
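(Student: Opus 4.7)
The first equality $E_v(t_2, t_1) = E_v(t_1, t_2)$ is immediate from the definition, since swapping $t_1 \leftrightarrow t_2$ exchanges the two integrals. For the remaining two equalities, my plan is to find a $t$-independent M\"obius involution $\phi$ of $\P^{1,an}_v$ that transforms the relevant canonical measures and local heights, and then to change variables in the integrals defining $E_v$.

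For $E_v(1-t_1, 1-t_2) = E_v(t_1, t_2)$, I would take $A(z) = 1-z$. The proof of Proposition \ref{G transformation} already records $A \circ F_t = -F_{1-t}\circ A$, so $A$ realizes a dynamical conjugacy of $f_t$ to $f_{1-t}$; the identity $\lambda_{1-t,v}(1-z) = \lambda_{t,v}(z)$ from the same proposition then yields $A_*\mu_{t,v} = \mu_{1-t,v}$ at every place upon applying the Laplacian. Substituting $z \mapsto 1-z$ turns each of the integrals in $E_v(1-t_1, 1-t_2)$ into the corresponding integral in $E_v(t_1, t_2)$.

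For $E_v(1/t_1, 1/t_2) = E_v(t_1, t_2)$, the Latt\`es conjugacy $z \mapsto z/t$ from Proposition \ref{G transformation} is $t$-dependent, so I cannot apply a single substitution. Instead I plan to use the $t$-independent involution $\sigma(z) = 1/z$, even though it is not itself a dynamical conjugacy. Two identities will be needed. First, $\sigma_*\mu_{t,v} = \mu_{1/t,v}$: I would factor $\sigma = (z/t) \circ (t/z)$, where the outer factor is the conjugacy of Proposition \ref{G transformation} and therefore pushes $\mu_{t,v}$ to $\mu_{1/t,v}$, while a direct computation gives $f_t(t/z) = f_t(z)$, so the inner factor permutes the fibers of $f_t$ and preserves $\mu_{t,v}$. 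Second,
\[
\lambda_{1/t,v}(1/z) \;=\; \lambda_{t,v}(z) \,-\, \log|z|_v \,-\, \tfrac12\log|t|_v,
\]
which I would derive by combining Proposition \ref{G transformation} with the homogeneous identity $F_t(tw,z) = t^2 F_t(z,w)$ (a short direct calculation); via homogeneity of the escape rate, the latter yields $\lambda_{t,v}(t/z) = \lambda_{t,v}(z) - \log|z|_v + \tfrac12\log|t|_v$, and composition with the transformation law of Proposition \ref{G transformation} gives the displayed formula.

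Substituting $u = 1/z$ in the integrals defining $E_v(1/t_1, 1/t_2)$, the $\log|z|_v$ terms cancel in the difference $\lambda_{1/t_1,v}(1/z) - \lambda_{1/t_2,v}(1/z)$, and the residual constants $\pm\tfrac12 \log|t_1/t_2|_v$ integrate to zero against the probability measures $\mu_{t_i,v}$, giving the desired equality. The hard part is precisely this third equality: since the Latt\`es conjugacy is $t$-dependent, one is forced to work with the non-dynamical substitution $z \mapsto 1/z$, which introduces a logarithmic error into the transformation of the local heights; the crux is verifying that this error cancels cleanly in the energy formula.
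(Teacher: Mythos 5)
Your first two equalities follow the paper exactly, but for the third equality you have noticed a real subtlety that the paper's own proof glosses over. The paper simply says ``Similarly, we have $f_{1/t}(z)=\alpha\circ f_t\circ\alpha^{-1}(z)$ for $\alpha(z)=z/t$, so $E_v(1/t_1,1/t_2)=E_v(t_1,t_2)$,'' but that $\alpha$ depends on $t$; using $\alpha_i(z)=z/t_i$ one gets $\mu_{1/t_i,v}=(\alpha_i)_*\mu_{t_i,v}$ for each $i$, and since $\alpha_1\neq\alpha_2$ the signed measure $\mu_{1/t_1,v}-\mu_{1/t_2,v}$ is not literally the pushforward of $\mu_{t_1,v}-\mu_{t_2,v}$ by a single map, so the argument that worked for $1-t$ does not carry over verbatim. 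Your fix via the $t$-independent involution $\sigma(z)=1/z$ is correct and genuinely fills this gap: the factorization $\sigma=(z/t)\circ(t/z)$, the identity $f_t(t/z)=f_t(z)$ (which I verified directly), and the homogeneity relation $F_t(tw,z)=t^2F_t(z,w)$ (giving $G_{F_t,v}(t,z)=G_{F_t,v}(z,1)+\tfrac12\log|t|_v$, hence $\lambda_{1/t,v}(1/z)=\lambda_{t,v}(z)-\log|z|_v-\tfrac12\log|t|_v$) all check out, and the constants $\pm\tfrac12\log|t_1/t_2|_v$ cancel against each other in the symmetrized energy exactly as you say. One small point worth making explicit: ``permutes the fibers of $f_t$, hence preserves $\mu_{t,v}$'' deserves a one-line justification --- e.g.\ from $f_t\circ\tau=f_t$ one gets $f_t^n\circ\tau=f_t^n$ by induction, hence $\tau^*\mu_{t,v}=\lim_n\frac{1}{4^n}\tau^*(f_t^n)^*\delta_y=\lim_n\frac{1}{4^n}(f_t^n)^*\delta_y=\mu_{t,v}$, and since $\tau$ is an involution $\tau_*=\tau^*$; alternatively the identity $\lambda_{t,v}(t/z)=\lambda_{t,v}(z)-\log|z|_v+\tfrac12\log|t|_v$ that you already derive gives $\tau^*\mu_{t,v}=\mu_{t,v}$ directly upon applying $\Delta$. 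Your version is tighter and more self-contained than the paper's, and should be preferred.
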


\begin{proof}
Given measures $\mu_{t_1, v}$ and $\mu_{t_2, v}$, the local energy $E_v(t_1, t_2)$ can be expressed as 
	$$- \frac12 \int g \;d(\mu_{t_1,v} - \mu_{t_2, v})$$
for any continuous potential $g$ of the signed measure $\mu_{t_1,v} - \mu_{t_2, v}$, because $g = \lambda_{t_1, v} - \lambda_{t_2,v} + c$ for some constant $c$.  We have 
   $$f_{1-t} =\alpha \circ  f_t\circ \alpha^{-1}$$
for $\alpha(z)=1-z = \alpha^{-1}(z)$, such that $\mu_{1-t,v} = \alpha_* \mu_{t,v}$ and $g = (\lambda_{t_1, v} - \lambda_{t_2,v}) \circ \alpha^{-1}$ is a potential for the measure $\mu_{1-t_1, v} - \mu_{1 - t_2, v}$.  Therefore, $E_v(1 - t_1, 1-t_2) = E_v(t_1, t_2)$.  Similarly, we have $f_{1/t}(z)=\alpha\circ  f_t\circ \alpha^{-1} (z)$ for $\alpha(z)=z/t$, so $E_v(1/t_1, 1/t_2) = E_v(t_1, t_2)$.  
\end{proof}

\subsection{Measures and mutual energy}\label{Mutualenergy}
Suppose that $\nu_1$ and $\nu_2$ are signed measures on $\P^1(\C)$ with trace measures $|\nu_i|$ for which the function $\log|z-w| \in L^1(|\nu_1|\otimes |\nu_2|)$ on $\C^2\setminus\mathrm{Diag}$.  The mutual energy of $\nu_1$ and $\nu_2$ is defined in \cite{FRL:equidistribution} by 
\begin{equation} \label{archimedean pairing}
	(\nu_1, \nu_2) := - \int_{\C^2\setminus\mathrm{Diag}} \log|z-w| \, d\nu_1 \otimes d\nu_2.
\end{equation}
This definition extends to the non-archimedean setting by replacing $|z-w|$ with the Hsia kernel $\delta_v(z,w)$ based at the point at $\infty$.  In this way, for $v \in M_K^0$, a pairing is defined similarly as
\begin{equation} \label{non-archimedean pairing}
	(\nu_1, \nu_2)_v := - \int_{\A^{1,an}_v \times \A^{1,an}_v \setminus\mathrm{Diag}} \log \delta_v(z,w) \, d\nu_1 \otimes d\nu_2.
\end{equation}
See \cite[\S4.4]{FRL:equidistribution} and \cite[Chapter 4]{BRbook}.  

For measures $\nu_i$ of total mass 0 with continuous potentials on $\P^{1,an}_v$ we have
	$$(\nu_1, \nu_2)_v = - \int g_1 \, d\nu_2$$
for any choice of continuous potential $g_1$ for $\nu_1$.  Further, $(\nu_1, \nu_2)_v \geq 0$ with equality if and only if $\nu_1 = \nu_2$ \cite[Propositions 2.6 and 4.5]{FRL:equidistribution}.  Note that Proposition \ref{local energy} is a special case of this fact.  Indeed, in this notation, the local energy $E_v(t_1, t_2)$ defined in Proposition \ref{local energy} is given by 
\begin{equation} \label{local energy as measure pair}
	E_v(t_1, t_2) = \frac{1}{2} \, (\mu_{t_1,v} - \mu_{t_2, v}, \; \mu_{t_1,v} - \mu_{t_2, v})_v
\end{equation}
at each place $v$ of a number field containing $t_1$ and $t_2$, for the canonical measures introduced in \S\ref{canonical measures}.
	
The mutual energy $(\cdot, \cdot)_v$ of \eqref{archimedean pairing} and \eqref{non-archimedean pairing} can also be defined for discrete measures.  If $F = \{x_1, \ldots, x_n\}$ is any finite set in a number field $K$, and $v \in M_K$, then denote by $[F]_v$ the probability measure supported equally on the elements of $F\subset \C_v$.  Then 
\begin{equation} \label{finite sets}
	\sum_v\; r_v\; ([F]_v, [F]_v)_v = \sum_v \; r_v \; \frac{1}{|F|^2} \sum_{i \not= j} \log|x_i - x_j|_v = 0
\end{equation}
by the product formula.  

\subsection{A metric on the space of adelic heights}  \label{metric definition}
The height pairing gives rise to a metric on the space of continuous, semipositive, adelic metrics on $\mathcal{O}_{\P^1}(1)$  \cite[Theorem 1]{Fili:energy}.  Given a number field $K$ and any collection of probability measures $\{\mu_v\}_{v \in M_K}$ on $\P^{1,an}_v$ with continuous potentials for which $\mu_v = \omega_v$ at all but finitely many places (where $\omega_v$ is a point mass supported on the Gauss point), then there is a unique metric on $\mathcal{O}_{\P^1}(1)$ with curvature distributions given by $\{\mu_v\}_{v \in M_K}$, normalized such that its associated height function $h: \P^1(\Qbar) \to \R$ satisfies $h\cdot h = 0$ \cite{FRL:equidistribution}.  The height pairing between any two such heights is computed as 
$$h_1 \cdot h_2 = \frac12 \sum_{v \in M_K} \; r_v \; (\mu_{1,v} - \mu_{2,v}, \; \mu_{1,v} - \mu_{2,v})_v.$$
Fili observed that a distance between $h_1$ and $h_2$ can be defined by 
	$$\dist(h_1, h_2) := \left(h_1 \cdot h_2 \right)^{1/2}. $$
Indeed, we have already seen that $h_1 \cdot h_2 = 0$ if and only if $h_1 = h_2$ because of the non-degeneracy of the mutual energy $(\cdot, \cdot)_v$ at each place.  Furthermore, $\dist(\cdot, \cdot)$ satisfies a triangle inequality: at each place, the mutual energy induces a non-degenerate, symmetric, bilinear form on the vector space of measures of mass 0 with continuous potentials on $\P^{1,an}_v$, and so the triangle inequality for $\dist(\cdot, \cdot)$ follows from an $\ell^2$ triangle inequality.

\bigskip
\section{Non-archimedean energy}
\label{nonarchsection}

Throughout this section, we fix a number field $K$ and a non-archimedean place $v \in M_K$, and provide a lower bound on the non-archimedean local energy defined in Proposition \ref{local energy}:

\begin{theorem}\label{non-arch energy}
For $t_1, t_2 \in K \setminus \{0, 1\},$ we have
$$  E_v(t_1,t_2)-\frac{4}{3}\log |2|_v\geq \left\{
\begin{array}{ll}
\dfrac{\log^2 |t_1/t_2|_v}{6\log \max\{|t_2|_v, |t_1|_v\}} & \mbox{for } { \min\{|t_2|_v, |t_1|_v\}>1}\\
& \\
\dfrac{\log^2 |t_1/t_2|_v}{-6\log \min\{|t_2|_v, |t_1|_v\}} & \mbox{for } { \max\{|t_2|_v, |t_1|_v\}<1 }\\
& \\
\dfrac{|\log |t_1/t_2|_v|}{6}  & { \mbox{otherwise.}}
\end{array} 
\right.  $$
Equality holds for $v \nmid 2$ with $\min\{|t_1-1|_v, |t_2-1|_v\} \geq 1$.
\end{theorem}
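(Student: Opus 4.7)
The strategy is to identify the canonical measures $\mu_{t,v}$ explicitly as interval measures on the Berkovich projective line $\P^{1,an}_v$ and then compute the local energy $E_v(t_1, t_2) = \tfrac{1}{2}(\mu_{t_1,v} - \mu_{t_2,v}, \mu_{t_1,v} - \mu_{t_2,v})_v$ as a Dirichlet integral on a tree. By the symmetries $E_v(t_1, t_2) = E_v(1/t_1, 1/t_2) = E_v(1-t_1, 1-t_2)$ from Proposition \ref{one sing}, case (b) reduces to case (a) via $t \mapsto 1/t$, and the remaining cases can be analyzed after normalizing the valuations of $t_1, t_2$.

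The key identification is that for $v \nmid 2$, the measure $\mu_{t,v}$ is the uniform probability measure (with respect to the hyperbolic arclength) on the convex hull in $\Hyp$ of the $2$-torsion images $\{0,1,t,\infty\}$. When $|t|_v = |1-t|_v = 1$ this convex hull collapses to the Gauss point and $\mu_{t,v} = \delta_G$ (potential good reduction). When $|t|_v > 1$, forcing $|1-t|_v = |t|_v$ by strong triangle inequality, the convex hull in $\Hyp$ is the segment $[\zeta_{0,1}, \zeta_{0,|t|_v}]$ of hyperbolic length $\log|t|_v$. This description arises because $\mu_{t,v}$ is the push-forward by $\pi$ of the Haar measure on $E_t^{\mathrm{an}}$, which on an elliptic curve with multiplicative reduction is uniform on the skeleton of its Tate uniformization; the hypothesis $\min\{|t_1-1|_v, |t_2-1|_v\} \geq 1$ in the equality statement is precisely the condition ensuring that neither convex hull acquires an extra branch near the point $1$.

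With the measures so described, in case (a) with $1 < |t_1|_v \leq |t_2|_v$ and $L_i := \log|t_i|_v$, the signed measure $\nu := \mu_{t_1,v} - \mu_{t_2,v}$ is supported on the larger nested segment, and parametrizing by arclength $s \in [0, L_2]$ its density is piecewise constant, equal to $1/L_1 - 1/L_2$ on $[0, L_1]$ and $-1/L_2$ on $[L_1, L_2]$, of total mass $0$. A continuous potential $g$ satisfying $-\Delta g = \nu$ is constant on every branch off the segment, so only the segment contributes to the Dirichlet energy, and with $g'$ computed as the running negative integral of the density one gets
\begin{equation*}
E_v(t_1, t_2) \;=\; \tfrac{1}{2}(\nu,\nu)_v \;=\; \tfrac{1}{2}\int_0^{L_2} \bigl(g'(s)\bigr)^2 \, ds \;=\; \frac{(L_2 - L_1)^2}{6 L_2},
\end{equation*}
which is the stated bound with equality. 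Case (c) follows from an analogous computation on two intervals meeting at the Gauss point, giving $E_v = (|L_1| + L_2)/6 = |\log|t_1/t_2|_v|/6$; case (b) then follows from case (a) by inversion.

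The main obstacle is the case $v \mid 2$, where the factor of $4$ in the denominator of $f_t$ destroys the clean multiplicative-reduction description: the Julia set, and hence the support of $\mu_{t,v}$, may extend beyond the convex hull of $\{0, 1, t, \infty\}$ onto additional branches of the Berkovich tree whose total hyperbolic length is controlled by $|\log|2|_v|$. A similar phenomenon occurs at $v \nmid 2$ when $\min\{|t_1-1|_v, |t_2-1|_v\} < 1$, where the convex hull gains an extra branch near $1$. In either situation one replaces the clean Dirichlet energy computation above with an explicit (but more intricate) one over the enlarged support; the discrepancy from the clean formula is shown to be at most $\tfrac{4}{3}\log|2|_v$ in absolute value, which is exactly the correction appearing in the statement, and yields the inequality rather than equality at these places.
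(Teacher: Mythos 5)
Your Dirichlet-energy route is genuinely different from the paper's computation: the paper computes $E_v(t_1,t_2)$ by first writing down the escape-rate functions $G_{F_t,v}(z,1)$ explicitly as piecewise polynomials in $\log|z|_v$ (Propositions~\ref{big t escape-rate formula}, \ref{small t escape-rate formula}, \ref{escape rate v|2}) and then integrating their differences against the interval measures, whereas you compute $\tfrac12(\nu,\nu)_v$ as $\tfrac12\int_0^{L}G(s)^2\,ds$ with $G(s)=\int_0^s d\nu$. Your numbers for the $v\nmid 2$ cases (a) and (c) do agree with the paper's, and case (b) does reduce correctly to (a) via $t\mapsto 1/t$. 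This is a nice, slightly cleaner way to organize the calculation when $v\nmid 2$, and it makes the non-negativity and the symmetry in $(t_1,t_2)$ manifest.

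However, there is a genuine gap: the entire case $v\mid 2$ is asserted, not proved. This is where all the content of the $\tfrac43\log|2|_v$ correction term lives, and it is not a routine perturbation of the $v\nmid 2$ argument. To carry it out you must first redo the identification of $\mu_{t,v}$ for $v\mid 2$, which turns out to be supported on an interval \emph{shifted} by factors of $|4|_v$ — for instance $[\zeta_{0,|1/4|_v},\zeta_{0,|4t|_v}]$ when $|t|_v>1/|16|_v$ — rather than on the support predicted by the naive convex-hull picture, and the regime of potential good reduction widens to $|16|_v\le|t|_v\le 1/|16|_v$, $|1-t|_v\ge|16|_v$ (Proposition~\ref{nonarch julia 2}). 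Your mechanism — ``additional branches of the tree of total length controlled by $|\log|2|_v|$'' — is not what happens: the support is still a single segment, just translated and of slightly different length, and the measure is still an interval measure. The actual bookkeeping producing exactly $\tfrac43\log|2|_v$ requires writing out the modified escape rates and redoing the integrals in each sub-case; this is the paper's \S3.4 and cannot be waved away. Relatedly, your characterization of $\mu_{t,v}$ for $v\nmid 2$ as ``uniform on the convex hull in $\Hyp$ of $\{0,1,t,\infty\}$'' is imprecise (the convex hull has legs running to infinity), and the remark about the hull ``gaining an extra branch near~$1$'' when $\min\{|t_i-1|_v\}<1$ is again not how the failure of equality arises — the measure remains an interval measure on a segment pointing toward $1$, and those cases are dispatched by the symmetry $E_v(t_1,t_2)=E_v(1-t_1,1-t_2)$ together with the observation that the resulting energy can only exceed $|\log|t_1/t_2|_v|/6$. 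You should state the support precisely (as in Proposition~\ref{non-archi julia 1}) and actually execute the $v\mid 2$ computation before the argument is complete.
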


\subsection{Measure and escape rate for $v \nmid 2$} \label{not2}

\begin{prop}\label{non-archi julia 1}
Suppose $t \in K \setminus \{0, 1\}$ and $v\nmid 2$.  Then $f_t$ has good reduction at $v$ if and only if $|t(t-1)|_v = 1$.  If $|t(t-1)|_v \not=1$, then $f_t$ fails to have potential good reduction at $v$, and the canonical measure $\mu_{t,v}$ on $\P^{1,an}_v$ of $f_t$ is the interval measure supported on 
  $$I= \left\{ \begin{array}{ll}  
		[\zeta_{0,1},\zeta_{0,|t|_v}] & \mbox{for } |t|_v>1 \mbox{ or } |t|_v<1, \\ 
		\textup{$[\zeta_{0,1}, \zeta_{1, |1-t|_v}]$}& \mbox{for } |1-t|_v<1.  \end{array} \right. $$
\end{prop}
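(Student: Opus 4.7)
The plan is to verify the two substantive claims: the good-reduction criterion and the explicit form of the canonical measure $\mu_{t,v}$.

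For the good-reduction criterion, I work with the homogeneous lift $F_t(z,w) = ((z^2-tw^2)^2, 4zw(z-w)(z-tw))$; the coefficient $4$ is a $v$-unit because $v \nmid 2$. If $|t(t-1)|_v = 1$, then modulo the maximal ideal of $K_v$ both components retain degree $4$ and share no common projective zero: a common zero would force $z^2 = \bar{t} w^2$ together with one of $z=0$, $w=0$, $z=w$, $z = \bar{t}w$, hence $\bar{t} \in \{0,1\}$, contradicting $\bar{t}(\bar{t}-1) \neq 0$. Thus $F_t$ reduces to a morphism of degree $4$ and $f_t$ has good reduction. For the converse, and for the stronger claim that potential good reduction fails when $|t(t-1)|_v \neq 1$, I invoke the standard fact that a Latt\`es map inherits potential good reduction from its underlying elliptic curve: $f_t$ has potential good reduction if and only if $E_t$ does, equivalently if and only if $j(t) = 256(1-t+t^2)^3/(t(t-1))^2$ is $v$-integral. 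A quick subcase analysis, using $|256|_v = 1$, yields $|j(t)|_v > 1$ in each of $|t|_v > 1$, $|t|_v < 1$, and $|t-1|_v < 1$.

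To identify $\mu_{t,v}$ I first reduce by symmetry. By Proposition \ref{G transformation}, the M\"obius conjugations $z \mapsto 1-z$ and $z \mapsto z/t$ intertwine $f_t$ with $f_{1-t}$ and $f_{1/t}$; composing them generates the full $S_3$-action permuting the three subcases of the hypothesis and mapping the corresponding claimed intervals to each other. It therefore suffices to treat the case $|t|_v > 1$, where the claimed support is $I = [\zeta_{0,1}, \zeta_{0,|t|_v}]$. In this case I identify $\mu_{t,v}$ as the pushforward under $\pi : E_t \to \P^1$ of the canonical measure on $E_t^{an}$, using that $f_t$ is the Latt\`es map for $[2]$. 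Since $|j(t)|_v = |t|_v^2 > 1$, $E_t$ has multiplicative reduction at $v$ with Tate parameter of valuation $2\log|t|_v$, so its Berkovich skeleton is a circle of that hyperbolic length carrying the canonical measure; the involution $P \mapsto -P$ folds it to an interval of length $\log|t|_v$ in $\P^{1,an}_v$. To pin down the endpoints I compute the retractions of the four $\pi$-branch points $\{0, 1, t, \infty\}$: since $|0-1|_v = 1$ while $|0-t|_v = |1-t|_v = |t|_v > 1$, the pair $\{0,1\}$ retracts to the Gauss point $\zeta_{0,1}$ and the pair $\{t,\infty\}$ retracts to $\zeta_{0,|t|_v}$, identifying $I$.

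The principal technical obstacle is making the Tate-uniformization description of $\mu_{t,v}$ rigorous in the non-archimedean setting. A hands-on alternative is to verify directly that $I$ is forward- and backward-invariant under $f_t$ and that every point of $\P^{1,an}_v \setminus I$ is attracted to $I$ under iteration; a seminorm calculation at the Gauss point gives $|(x^2-t)^2|_{\zeta_{0,1}} = |t|_v^2$ and $|4x(x-1)(x-t)|_{\zeta_{0,1}} = |t|_v$, so $f_t(\zeta_{0,1})$ is a Type II point of radius $|t|_v$ lying on $I$. Uniqueness of the canonical measure as the only probability measure with continuous potentials satisfying $f_t^*\mu = 4\mu$ then pins $\mu_{t,v}$ down as the uniform interval measure on $I$.
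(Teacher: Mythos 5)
Your proposal is correct but takes a genuinely different route from the paper. The paper's proof is terse: it reduces to $|t|_v > 1$ by the symmetries of Proposition \ref{G transformation} (as you do), then cites \cite[\S5.1]{FRL:ergodic} for the fact that the interval $I = [\zeta_{0,1},\zeta_{0,|t|_v}]$ is totally invariant and that $f_t$ acts on it by a degree-two tent map, from which the interval measure follows; the good-reduction claims are left implicit (they are discussed in \S2.4). By contrast, you take an elliptic-curve-centric approach: you identify $\mu_{t,v}$ as the pushforward of Haar measure on the Tate curve $E_t^{an}$ under $\pi$, read off the skeleton circle of circumference $-\log|q|_v = 2\log|t|_v$, fold by $P \mapsto -P$, and locate the endpoints via retractions of the branch points $\{0,1,t,\infty\}$. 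This makes the geometric source of the measure transparent and naturally produces the endpoints, but it imports the non-trivial facts that the canonical measure of a non-archimedean Latt\`es map is $\pi_*$ of Haar measure and that Haar measure on a Tate curve is uniform on the skeleton, neither of which you make precise (you acknowledge this). Your "hands-on alternative" — direct verification of total invariance of $I$ plus uniqueness of the balanced measure — is in fact the route the paper takes via FRL, but you only carry out the computation of $f_t(\zeta_{0,1})$ and not the full invariance; note also that $|f_t|_{\zeta_{0,1}} = |t|_v$ alone pins down $|x|_{f_t(\zeta_{0,1})}$, not yet that the image is $\zeta_{0,|t|_v}$ specifically — one needs to check in addition that $|x - c|_{f_t(\zeta_{0,1})} = |t|_v$ for all $|c|_v \leq |t|_v$. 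Your treatment of the good-reduction criterion (reduction of the homogeneous lift has no common zero when $|t|_v = |t-1|_v = 1$, and $|j(t)|_v > 1$ otherwise) is a clean and complete argument that the paper relegates to a remark, so your write-up is if anything more self-contained on that point.
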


\begin{proof}
By Proposition \ref{G transformation}, it suffices to treat the cases with $|t|_v > 1$.  By \cite[\S5.1]{FRL:ergodic}, $f_t^{-1}(I) = I$ and the action of $f_t$ on $I$ is by a tent map of degree 2.  That is, 
  $$f_t(\zeta_{0,|t|_v^r})= \left\{ \begin{array}{ll}  
		\zeta_{0,|t|_v^{2r-1}} & \mbox{for } 1/2\leq r\leq 1, \\ 
		\zeta_{0,|t|_v^{1-2r}}& \mbox{for } 0 \leq r\leq 1/2.  \end{array} \right. $$
The proposition follows.		
\end{proof}

We may now compute the local height $\lambda_{t,v}(z)=G_{F_t, v}(z,1)$ on $\A_{v}^{1,an}$, which is locally constant away from the interval $[0, \infty) \subset \mathbb{A}^{1, an}_v$.

\begin{prop}\label{big t escape-rate formula}
Suppose $v\nmid 2$ is non-archimedean and $|t(t-1)|_v\geq 1$. The escape-rate function $G_{F_t, v}$ satisfies 
\begin{equation}\label{escape-rate big t}G_{F_t, v}(z, 1)= \left\{  \begin{array}{ll}
				\log |z|_v  & \mbox{for } |z|_v \geq |t|_v \\
				& \\
				\dfrac{1}{2}\left(\dfrac{\log^2|z|_v}{\log|t|_v}+\log |t|_v \right) & \mbox{for } 1 < |z|_v <|t|_v \\
				& \\
				\dfrac12\log |t|_v & \mbox{for } |z|_v\leq 1  \end{array} \right. 
\end{equation}
for all $z\in \C_v$. 				
\end{prop}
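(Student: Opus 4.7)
The plan is to characterize $g(z) := G_{F_t,v}(z,1)$ by two properties: (i) the functional equation
$$g(f_t(z)) = 4g(z) - \log|4z(z-1)(z-t)|_v,$$
which follows from $G_{F_t,v} \circ F_t = 4\, G_{F_t,v}$ together with the logarithmic homogeneity of $G_{F_t,v}$, and (ii) the bound $g(z) - \log^+|z|_v = O(1)$. Any two functions satisfying (i) and (ii) differ by a bounded $\phi$ with $\phi \circ f_t = 4\phi$, hence $\phi \equiv 0$ by iteration. Denoting the right-hand side of \eqref{escape-rate big t} by $g^*$, it therefore suffices to verify (i) and (ii) for $g^*$.

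Property (ii) is immediate since $g^* - \log^+|z|_v$ vanishes outside $\{|z|_v \leq |t|_v\}$ and is bounded there. Continuity of $g^*$ across the boundaries $|z|_v = 1$ and $|z|_v = |t|_v$ (the three pieces agree at the common values $L/2$ and $L$ respectively, where $L := \log|t|_v$) is a direct inspection of the formula. For (i), the hypothesis $|t(t-1)|_v \geq 1$ forces $|t|_v \geq 1$; the case $|t|_v = 1$ collapses the formula to $\log^+|z|_v$, which is the correct escape rate at good reduction, so I focus on $|t|_v > 1$.

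In the outer region $|z|_v \geq |t|_v$, the ultrametric yields $|z-1|_v = |z-t|_v = |z|_v$ and $|z^2-t|_v = |z|_v^2$, so $|f_t(z)|_v = |z|_v$ and $f_t$ preserves the outer region; both sides of (i) equal $\log|z|_v$. In the inner region $|z|_v \leq 1$, one has $|z^2-t|_v = |z-t|_v = |t|_v$, so $|f_t(z)|_v = |t|_v/(|z|_v|z-1|_v) \geq |t|_v$ lands in the outer region, and (i) reduces to the identity $\log|f_t(z)|_v = L - \log(|z|_v|z-1|_v)$, which holds.

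The essential case is the middle region $1 < |z|_v < |t|_v$, where $|z-1|_v = |z|_v$ and $|z-t|_v = |t|_v$ give $\log|4z(z-1)(z-t)|_v = 2s + L$ with $s = \log|z|_v$. Computing $|(z^2-t)^2|_v = \max(|z|_v^4, |t|_v^2)$ and $|4z(z-1)(z-t)|_v = |z|_v^2 |t|_v$ yields $\log|f_t(z)|_v = |L - 2s|$, so $f_t$ preserves the middle region and acts on $\sigma = s/L \in [0,1]$ by the tent map $\sigma \mapsto |1-2\sigma|$. Substituting the quadratic $g^*(z) = \tfrac{1}{2}(s^2/L + L)$ into (i) reduces to the polynomial identity
$$\tfrac{1}{2}\bigl((L-2s)^2/L + L\bigr) \;=\; 4 \cdot \tfrac{1}{2}(s^2/L + L) - (2s+L),$$
both sides evaluating to $L - 2s + 2s^2/L$. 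The main bookkeeping hurdle is the sub-case split at $|z|_v = \sqrt{|t|_v}$ in evaluating $|(z^2-t)^2|_v$; since $f_t(z)$ remains in the middle region in both sub-cases, a single check of (i) handles them uniformly.
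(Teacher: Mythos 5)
Your proof is correct in substance but takes a genuinely different route from the paper. The paper characterizes $G_{F_t,v}(\cdot,1)$ through potential theory on $\P^{1,an}_v$: it observes that the continuous extension $\lambda$ of the right-hand side satisfies $\Delta\lambda = \mu_{t,v} - \delta_\infty$ (using Proposition~\ref{non-archi julia 1} to identify $\mu_{t,v}$ as the interval measure on $[\zeta_{0,1},\zeta_{0,|t|_v}]$), so $\lambda$ and the escape rate differ by a constant, and then pins down that constant by iterating the recursion \eqref{recursive relation} at a single point with $|z_0|_v > |t|_v$. You instead characterize the escape rate by the functional equation $g(f_t(z)) = 4g(z) - \log|4z(z-1)(z-t)|_v$ together with boundedness of $g - \log^+|\cdot|_v$, and verify the candidate formula region by region. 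Your approach is more elementary — it needs no Berkovich Laplacian and no reference to Proposition~\ref{non-archi julia 1} — but buys this at the cost of a three-region case analysis; the paper's argument is shorter given that it has already established the structure of $\mu_{t,v}$.

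One caveat on your middle-region bookkeeping: the assertion that ``$f_t(z)$ remains in the middle region in both sub-cases'' fails at $|z|_v = \sqrt{|t|_v}$. There one only knows $|z^2-t|_v \leq |t|_v$, so $|f_t(z)|_v = |z^2-t|_v^2/|t|_v^2 \leq 1$ and the image can land in the inner region (or strictly inside it when $|z^2-t|_v < |t|_v$). Your polynomial identity still produces the correct value $L/2$ at $s = L/2$ because $g^*$ is constant equal to $L/2$ on $\{|z|_v \le 1\}$ and the middle-region formula continuously attains that same value on the boundary, so the verification goes through — but not for the reason you state. A similar harmless slip occurs in the outer region, where $|z-t|_v = |z|_v$ need not hold when $|z|_v = |t|_v$; the cancellation $|f_t(z)|_v = |z|_v^2/|z-t|_v$ together with $g^*(f_t(z)) = \log|f_t(z)|_v$ still gives equality without needing $|f_t(z)|_v = |z|_v$. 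Neither slip affects the conclusion, but both statements should be corrected in a careful write-up.
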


\begin{proof} Let $\lambda$ be the continuous extension of the expression on the right hand side of the formula \eqref{escape-rate big t} to  $\A_v^{1,an}$. By Proposition \ref{non-archi julia 1}, $\mu_{t,v}$ is the interval measure corresponding to $[\zeta_{0, 1}, \zeta_{0,|t|_v}]$, and a direct computation shows that 
    $$\Delta \lambda =\mu_{t,v} - \delta_\infty.$$
Thus it suffices to show that $G_{F_t, v}(\cdot, 1)$ and $\lambda$ agree at a single point.  For any $z_0 \in \C_v$ with $|z_0|_v>|t|_v$, define $(z_n, w_n):=F_{t}^n(z_0,1)$, so that
\begin{equation}\label{recursive relation}
(z_{n+1}, w_{n+1})=F_t(z_n,w_n)=((z^2_{n}-tw_{n}^2)^2, 4z_{n}w_{n}(z_{n}-w_{n})(z_{n}-tw_{n})).
\end{equation}
Inductively,
   $$|z_{n}|_v=|z_0|_v^{4^n}>|t|_v|w_{n}|_v>|w_n|_v.$$
Consequently, 
   $$G_{F_t, v}(z_0,1)=\lim_{n\to \infty} \frac{1}{4^n}\log \|F_t^n(z_0, 1)\|_v=\log|z_0|_v=\lambda(z_0).$$
\end{proof}
 
A similar application of Proposition \ref{non-archi julia 1} yields
\begin{prop}\label{small t escape-rate formula}
Suppose $v\nmid 2$ is non-archimedean and $|t|_v<1$. The escape-rate function $G_{F_t, v}$ satisfies 
\begin{equation}G_{F_t, v}(z, 1)= \left\{  \begin{array}{ll}
				\log |z|_v  & \mbox{for } |z|_v \geq 1 \\
				&\\
				-\dfrac{\log^2|z|_v}{2\log|t|_v}+\log |z|_v & \mbox{for } |t|_v < |z|_v <1 \\
				&\\
				\dfrac12\log |t|_v & \mbox{for } |z|_v\leq |t|_v  \end{array} \right.
\end{equation}
for all $z\in \C_v$. 				
\end{prop}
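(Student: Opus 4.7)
My plan is to reduce to the already-established Proposition \ref{big t escape-rate formula} via the transformation identity $G_{F_t, v}(z,1) = G_{F_{1/t}, v}(z/t, 1) + \log|t|_v$ from Proposition \ref{G transformation}. Setting $s := 1/t$, the hypothesis $|t|_v < 1$ gives $|s|_v > 1$; moreover $|1-t|_v = 1$ by the ultrametric inequality, hence $|s(s-1)|_v = |s|_v^2 > 1$, so Proposition \ref{big t escape-rate formula} applies at the parameter $s$.

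The remaining task is a case-by-case substitution with $w := z/t$, under which $|w|_v = |z|_v/|t|_v$. The outer region $|z|_v \geq 1$ corresponds to $|w|_v \geq |s|_v$, giving $G_{F_s,v}(w,1) = \log|w|_v$, and adding $\log|t|_v$ recovers $\log|z|_v$. The annulus $|t|_v < |z|_v < 1$ corresponds to $1 < |w|_v < |s|_v$, where the middle branch of \eqref{escape-rate big t} gives a quadratic expression in $\log|w|_v = \log|z|_v - \log|t|_v$; substituting $\log|s|_v = -\log|t|_v$ and then adding $\log|t|_v$ simplifies this to $-\log^2|z|_v/(2\log|t|_v) + \log|z|_v$. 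Finally, $|z|_v \leq |t|_v$ corresponds to $|w|_v \leq 1$, yielding the constant $\log|s|_v/2$, and adding $\log|t|_v$ produces $\log|t|_v/2$.

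As a sanity check and equally valid alternative, one could mirror the direct proof of Proposition \ref{big t escape-rate formula}: let $\lambda$ denote the continuous extension to $\A_v^{1,an}$ of the claimed piecewise formula, verify $\Delta\lambda = \mu_{t,v} - \delta_\infty$ using the description of $\mu_{t,v}$ as the interval measure on $[\zeta_{0,1}, \zeta_{0,|t|_v}]$ from Proposition \ref{non-archi julia 1}, and then fix the additive constant by evaluating at any $z_0 \in \C_v$ with $|z_0|_v > 1$ via direct iteration of $F_t$ (the leading coordinate dominates, forcing $G_{F_t,v}(z_0,1) = \log|z_0|_v$). I expect no real obstacle; the only care needed is with signs, since $\log|t|_v < 0$ throughout, and with continuity at the breakpoints $|z|_v = 1$ and $|z|_v = |t|_v$, which is immediate from the formulas.
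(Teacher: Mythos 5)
Your primary argument is correct and is a genuinely different route from the paper's. The paper's proof of Proposition~\ref{small t escape-rate formula} is a one-line remark instructing the reader to repeat the potential-theoretic argument of Proposition~\ref{big t escape-rate formula}: extend the claimed piecewise formula continuously to $\A_v^{1,an}$, check that its Laplacian equals $\mu_{t,v}-\delta_\infty$ using the interval-measure description of Proposition~\ref{non-archi julia 1} (now on $[\zeta_{0,1},\zeta_{0,|t|_v}]$), and pin down the additive constant by iterating $F_t$ at a point with $|z_0|_v>1$. You instead deduce the small-$t$ formula purely algebraically from the already-proved big-$t$ formula via the identity $G_{F_t,v}(z,1)=G_{F_{1/t},v}(z/t,1)+\log|t|_v$ of Proposition~\ref{G transformation}, after observing that $s=1/t$ satisfies $|s(s-1)|_v=|s|_v^2>1$ (using $|1-t|_v=1$ by the ultrametric inequality). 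The case-by-case substitution with $w=z/t$ checks out: in particular, in the annulus the middle branch of~\eqref{escape-rate big t} gives
\[
\frac{1}{2}\!\left(\frac{(\log|z|_v-\log|t|_v)^2}{-\log|t|_v}-\log|t|_v\right)=-\frac{\log^2|z|_v}{2\log|t|_v}+\log|z|_v-\log|t|_v,
\]
and adding $\log|t|_v$ recovers the claimed expression. Your reduction is shorter and avoids re-running the Laplacian computation, at the cost of relying on Proposition~\ref{G transformation}; the paper's approach is more self-contained and uniform across the two propositions. You even flag the paper's method as your fallback ``sanity check,'' so both routes are covered.

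Two small things worth making explicit if you write this up: (i) the hypothesis $|s(s-1)|_v\geq 1$ of Proposition~\ref{big t escape-rate formula} forces $|s|_v\geq 1$, which is what makes the trichotomy in~\eqref{escape-rate big t} well-posed at the parameter $s$; and (ii) you should record the one-line calculation $|s-1|_v=|1-t|_v/|t|_v=|s|_v$ rather than just asserting $|s(s-1)|_v=|s|_v^2$. Neither is a gap, just bookkeeping.
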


\subsection{Proof of Theorem \ref{non-arch energy} for $v\nmid 2$} 
We compute the local energy $E_v(t_1, t_2)$ by cases.

\medskip
\noindent{\bf Case (1):} $|t_1|_v>1$ and $|t_2|_v<1$. Recall the local energy can be expressed as
   $$2\, E_{v}(t_1, t_2)=\int_{\P^{1,an}_v} \left(\lambda_{t_1, v} - \lambda_{t_2,v}\right) \, d\mu_{t_2,v} + \int_{\P^{1,an}_v} \left( \lambda_{t_2, v} - \lambda_{t_1, v} \right) \, d\mu_{t_1,v}.$$
Therefore by Proposition \ref{non-archi julia 1}, \ref{big t escape-rate formula} and \ref{small t escape-rate formula}, 
\[\begin{split}2E_{v}(t_1, t_2) &= \int_{\P^{1,an}_v} \left(\lambda_{t_1, v} - \lambda_{t_2,v}\right) \, d\mu_{t_2,v} + \int_{\P^{1,an}_v} \left( \lambda_{t_2, v} - \lambda_{t_1, v} \right) \, d\mu_{t_1,v}. \\
&=  \int^0_{\log|t_2|_v} \left(\frac{\log|t_1|_v}{2} -\left(-\frac{x^2}{2\log|t_2|_v}+x\right)\right) \, \frac{dx}{-\log|t_2|_v}\\
&\, \, \, \, \, \, \, + \int^{\log|t_1|_v}_0 \left(x-\frac{1}{2}\left(\frac{x^2}{\log|t_1|_v}+\log|t_1|_v\right)\right) \, \frac{dx}{\log|t_1|_v}\\
&=\frac{\log |t_1/t_2|_v}{3}.
\end{split}
\]

\medskip
\noindent{\bf Case (2):} $|t_1|_v> 1$ and $|t_2|_v> 1$. Without loss of generality, we assume that $|t_1|_v=\max\{|t_1|_v, |t_2|_v\}$. By Proposition \ref{non-archi julia 1} and \ref{big t escape-rate formula}, 
\[\begin{split}2E_{v}(t_1, t_2) &= \int_{\P^{1,an}_v} \left(\lambda_{t_1, v} - \lambda_{t_2,v}\right) \, d\mu_{t_2,v} + \int_{\P^{1,an}_v} \left( \lambda_{t_2, v} - \lambda_{t_1, v} \right) \, d\mu_{t_1,v}. \\
&=  \int_0^{\log|t_2|_v} \left(\frac{1}{2}\left(\frac{x^2}{\log|t_1|_v}+\log|t_1|_v\right) -\frac{1}{2}\left(\frac{x^2}{\log|t_2|_v}+\log|t_2|_v\right)\right) \, \frac{dx}{\log|t_2|_v}\\
&\, \, \, \, \, \, +\int_0^{\log|t_2|_v} \left(\frac{1}{2}\left(\frac{x^2}{\log|t_2|_v}+\log|t_2|_v\right) -\frac{1}{2}\left(\frac{x^2}{\log|t_1|_v}+\log|t_1|_v\right)\right) \, \frac{dx}{\log|t_1|_v}\\
&\, \, \, \, \, \,  + \int^{\log|t_1|_v}_{\log|t_2|_v} \left(x-\frac{1}{2}\left(\frac{x^2}{\log|t_1|_v}+\log|t_1|_v\right)\right) \, \frac{dx}{\log|t_1|_v}\\
&=\frac{\log^2 |t_1/t_2|_v}{3\log \max\{|t_1|_v, |t_2|_v\}}.
\end{split}
\]

\medskip
\noindent{\bf Case (3):} $|t_2(t_2-1)|_v=1$ and $|t_1-1|_v\geq1$. In this case, $f_{t_2}$ has good reduction, so $\mu_{t_2,v}$ is a point mass supported on the Gauss point $\zeta_{0,1}$.  Hence
   $$2E_{v}(t_1, t_2)=\frac{|\log |t_1|_v|}{3}=\frac{|\log |t_1/t_2|_v|}{3}.$$
   
\medskip 
\noindent{\bf Case (4):} The remaining cases reduce to the above three by the symmetry relations of Proposition \ref{one sing}.  This completes the proof of Theorem \ref{non-arch energy} under the assumption that $v \nmid 2$.

\subsection{Measure and escape rate for $v \mid 2$}

\begin{prop}\label{nonarch julia 2}
Suppose $v\mid 2$ is non-archimedean. The canonical measure $\mu_{t,v}$ on $\P^{1,an}_v$ of $f_t$ is the interval measure corresponding to the interval $I$ with
  $$I= \left\{ \begin{array}{ll}  
		[\zeta_{0,|t/4|_v},\zeta_{0,|4|_v}] & \mbox{for } |t|_v < |16|_v, \\ 
		\textup{$[\zeta_{0,|1/4|_v}, \zeta_{0, |4t|_v}]$}& \mbox{for } |t|_v > 1/|16|_v,\\ 
		\textup{$[\zeta_{1,|1-t|_v/|4|_v}, \zeta_{1,|4|_v}]$}  & \mbox{for } |1-t|_v <  |16|_v.\end{array} \right.$$
For $|16|_v\leq |t|_v\leq 1/|16|_v$ with $|1-t|_v\geq |16|_v$, $f_t(z)$ has potential good reduction, and $\mu_{t,v}$ is supported on a single point in $\Hyp$. 
\end{prop}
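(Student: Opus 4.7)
The plan is to first use the symmetries from Proposition~\ref{G transformation} to reduce to the single case $|t|_v<|16|_v$. The conjugacy $\alpha(z)=1-z$ gives $f_{1-t}=\alpha\circ f_t\circ\alpha^{-1}$ and sends $\zeta_{0,r}$ to $\zeta_{1,r}$, so the case $|1-t|_v<|16|_v$ is the image of $|t|_v<|16|_v$ under $\alpha$.  Similarly, $\gamma(z)=z/t$ gives $f_{1/t}=\gamma\circ f_t\circ\gamma^{-1}$ and sends $\zeta_{0,r}$ to $\zeta_{0,r/|t|_v}$, so the case $|t|_v>1/|16|_v$ is the image (via $\gamma^{-1}$) of the case $|s|_v<|16|_v$ with $s=1/t$.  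A direct check shows the three claimed intervals are consistent under these symmetries, so it suffices to treat $|t|_v<|16|_v$.  For the final ``good reduction'' case $|16|_v\leq|t|_v\leq 1/|16|_v$ with $|1-t|_v\geq|16|_v$, I would compute $|j(t)|_v$ directly from \eqref{j invariant} and verify $|j(t)|_v\leq 1$ in each of the subcases $|t|_v<1$, $|t|_v=1$, and $|t|_v>1$; this yields potential good reduction for $E_t$ and hence for $f_t$ as in \S\ref{canonical measures}, so $\mu_{t,v}$ is a point mass on a single Type II point.

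For the main case $|t|_v<|16|_v$, I would work with the Gauss norms of the numerator $P(x)=(x^2-t)^2$ and denominator $Q(x)=4x(x-1)(x-t)$ of $f_t$ at the Berkovich points $\zeta_{0,r}$. For $|t|_v\leq r\leq 1$ a coefficient-wise computation gives $\|Q\|_{\zeta_{0,r}}=|4|_v r^2$ and $\|P\|_{\zeta_{0,r}}=\max(r^4,|t|_v^2)$, whence
\begin{equation*}
\|f_t\|_{\zeta_{0,r}}=\begin{cases} |t|_v^2/(|4|_v r^2),& |t/4|_v\leq r\leq|\sqrt{t}|_v,\\[2pt] r^2/|4|_v,& |\sqrt{t}|_v\leq r\leq|4|_v.\end{cases}
\end{equation*}
Reading this as the action of $f_t$ on the axis $\{\zeta_{0,r}\}\subset\P^{1,an}_v$, we see that $f_t$ carries $I:=[\zeta_{0,|t/4|_v},\zeta_{0,|4|_v}]$ onto itself, with $\zeta_{0,|\sqrt{t}|_v}$ as the unique fold; on each of the two halves, the slope in the logarithmic coordinate $\log r$ has absolute value $2$, giving local degree $2$.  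Since the sum of the two local degrees equals $4=\deg f_t$, every preimage of a point of $I$ already lies in $I$, so $f_t^{-1}(I)=I$.  The restriction $f_t|_I$ is thus a degree-$4$ tent map on an interval, and its unique invariant probability measure of maximal entropy is the uniform interval measure on $I$, which must coincide with $\mu_{t,v}$.

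The main obstacle is accounting for all the critical points of $f_t$ so that only the single fold at $\zeta_{0,|\sqrt{t}|_v}$ disturbs the piecewise-linear structure inside $I$.  Besides $\pm\sqrt{t}$, which both project to $\zeta_{0,|\sqrt{t}|_v}$, the quartic factor $x^4-2(1+t)x^3+6tx^2-2t(1+t)x+t^2$ of the numerator of $f_t'$ contributes four further critical points.  Reading its Newton polygon with respect to $v$ (using $|t|_v<|16|_v$ and $|1+t|_v=1$) locates their absolute values at $|t/2|_v$, $|\sqrt{t}|_v$, $|\sqrt{t}|_v$, and $|2|_v$; since $|2|_v<1$ forces $|t/2|_v<|t/4|_v$ and $|2|_v>|4|_v$, the two extremal critical points lie strictly outside $I$ while the two middle ones coincide with the fold already identified.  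This verifies the tent-map description on $I$ and completes the proof.
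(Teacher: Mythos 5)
Your proof follows the paper's route: reduce by the symmetries of Proposition~\ref{G transformation} to a single base case, establish the tent-map action of $f_t$ on the relevant segment of the axis $[0,\infty]$, and handle the middle range of $|t|_v$ by checking $|j(t)|_v\leq 1$. The paper's base case is $|t|_v>1/|16|_v$ rather than your $|t|_v<|16|_v$ (equivalent under $t\leftrightarrow 1/t$), and it cites \cite{FRL:ergodic} for the tent-map computation rather than rederiving it; the Gauss-norm calculation you supply is precisely the content deferred to that reference. Two small remarks on your write-up. First, the slope of $\log\|f_t\|_{\zeta_{0,r}}$ in $\log r$ equals the number of zeros minus poles of $f_t$ inside $D(0,r)$, and this already pins down the unique fold at $\zeta_{0,|\sqrt{t}|_v}$ together with local degree $2$ on each half of the interval; your Newton-polygon count of the roots of the critical quartic is therefore a pleasant cross-check rather than an independently necessary step. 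Second, like the paper, you pass silently from $\|f_t\|_{\zeta_{0,r}}=s$ to $f_t(\zeta_{0,r})=\zeta_{0,s}$. That inference requires knowing $f_t$ carries the relevant part of the axis into itself (the Gauss norm alone does not locate the image point), which must be argued separately — for instance from the commutation $f_t(t/x)=f_t(x)$ together with the placement of the zeros and poles $\{0,1,t,\pm\sqrt{t},\infty\}$ — and is part of what the appeal to \cite{FRL:ergodic} absorbs. With that acknowledged, the argument is correct.
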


\begin{proof} 
We proceed as in the computations of \cite[\S5.1]{FRL:ergodic}, though the authors had assumed for simplicity that the residue characteristic of their field is not 2.  If $|t|_v > |1/16|_v$, the interval $[\zeta_{0,|1/4|_v}, \zeta_{0, |4t|_v}]$ is totally invariant by $f_t$, and 
$$f_t(\zeta_{0,|4t|_v|16t|_v^{-r}})=\zeta_{0,|4t|_v|16t|_v^{-2r}} \; \mbox{ and } \; f_t(\zeta_{0,|4t|_v|16t|_v^{r-1}})=\zeta_{0,|4t|_v|16t|_v^{-2r}}$$
for $r\in [0,1/2]$.  Thus $\mu_{t,v}$ is the interval measure on $[\zeta_{0,|1/4|_v}, \zeta_{0, |4t|_v}]$. The cases $|t|_v < |16|_v$ or $|1-t|_v < |16|_v$ can then be deduced from Proposition \ref{G transformation}.

For all $|16|_v\leq |t|_v\leq 1/|16|_v$ with $|1-t|_v\geq |16|_v$, we have $|j(t)|_v\leq 1$, so $f_t$ has potential good reduction.
\end{proof}

Following the proofs of Propositions \ref{big t escape-rate formula} and \ref{small t escape-rate formula}, from Proposition \ref{nonarch julia 2} we obtain

\begin{prop}\label{escape rate v|2}
Suppose $v\mid 2$ is non-archimedean.  We have
\begin{equation}
G_{F_t, v}(z, 1)= \left\{  \begin{array}{ll}
				\log |z|_v  & \mbox{for } |z|_v \geq |4t|_v \\
				&\\
				\dfrac{1}{2}\left(\dfrac{\log^2|4z|_v}{\log|16t|_v}+\log |t|_v\right) & \mbox{for } 1/|4|_v< |z|_v<|4t|_v  \\
				&\\
				\dfrac{1}{2}\log |t|_v & \mbox{for } |z|_v\leq1/|4|_v\end{array} \right.
\end{equation}
for $t$ with $|t|_v\geq 1/|16|_v$, and 
\begin{equation}G_{F_t, v}(z, 1)= \left\{  \begin{array}{ll}
				\log |z|_v  & \mbox{for } |z|_v \geq |4|_v \\
				&\\
				\dfrac{1}{2}\left(\dfrac{\log^2|4z/t|_v}{\log|16/t|_v}+\log |t|_v\right) & \mbox{for } |4|_v< |z|_v<|t/4|_v  \\
				&\\
				\dfrac{1}{2}\log |t|_v & \mbox{for } |z|_v\leq |t/4|_v\end{array} \right.
\end{equation}
for $t$ with $|t|_v\leq |16|_v$.
\end{prop}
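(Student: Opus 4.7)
The plan is to mimic the proofs of Propositions \ref{big t escape-rate formula} and \ref{small t escape-rate formula}, replacing the interval description of the canonical measure from Proposition \ref{non-archi julia 1} with its $v\mid 2$ counterpart from Proposition \ref{nonarch julia 2}. I would handle only the first case $|t|_v \geq 1/|16|_v$ directly; the second case would then follow by the scaling relation of Proposition \ref{G transformation}.

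For the first case, let $\lambda$ be the continuous extension to $\A_v^{1,an}$ of the piecewise-defined function on the right-hand side. The breakpoints $|z|_v = |4t|_v$ and $|z|_v = 1/|4|_v$ match the endpoints $\zeta_{0,|4t|_v}$ and $\zeta_{0,|1/4|_v}$ of the support interval for $\mu_{t,v}$ provided by Proposition \ref{nonarch julia 2}. A direct computation of the Laplacian on each linear segment in the interval $[\zeta_{0,|1/4|_v}, \zeta_{0,|4t|_v}]$ and on the two unbounded branches confirms
$$\Delta \lambda = \mu_{t,v} - \delta_\infty.$$
Since $G_{F_t,v}(\cdot, 1)$ is a potential for the same signed measure, the two functions differ by a constant. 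To pin down the constant, I would verify $G_{F_t,v}(z_0,1) = \lambda(z_0) = \log|z_0|_v$ at a single point $z_0 \in \C_v$ with $|z_0|_v$ sufficiently large. Writing $(z_n, w_n) := F_t^n(z_0, 1)$, an induction using the recursion \eqref{recursive relation} shows that if $|z_0|_v$ is chosen large enough that $|z_0|_v > |4t|_v$ and the ratio $|z_n/w_n|_v$ stays above $|4t|_v$ at every step, then $|z_{n+1}|_v = |z_n|_v^4$ dominates $|w_{n+1}|_v$, so that
$$G_{F_t,v}(z_0,1) = \lim_{n\to\infty} \frac{1}{4^n} \log\|F_t^n(z_0,1)\|_v = \log|z_0|_v.$$

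For the second case $|t|_v \leq |16|_v$, I would not redo the Laplacian computation. Instead, since $|1/t|_v \geq 1/|16|_v$, apply the formula already proved for $F_{1/t}$ and invoke the identity $G_{F_t,v}(z,1) = G_{F_{1/t},v}(z/t, 1) + \log|t|_v$ from Proposition \ref{G transformation}. Substituting $z' = z/t$ into the three-piece formula for $G_{F_{1/t},v}$ and adding $\log|t|_v$ gives the stated three-piece formula for $G_{F_t,v}$, with the breakpoints transformed accordingly (the critical radii $|4/t|_v$ and $1/|4|_v$ for $F_{1/t}$ become $|4|_v$ and $|t/4|_v$ after rescaling by $t$).

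The main technical obstacle is controlling the inductive step in the first case: unlike the $v\nmid 2$ setting, the factor of $4$ in the denominator of $f_t$ satisfies $|4|_v < 1$, so the coordinate $w_n$ of the homogeneous iterate is not naively bounded. One must choose the starting radius $|z_0|_v$ large enough not just to exceed $|4t|_v$, but to ensure that even after accounting for the growth of $|w_n|_v$ caused by the shrinking of $|4|_v$, the ratio $|z_n/w_n|_v$ remains above the threshold that forces $|(z_n^2 - t w_n^2)^2|_v = |z_n|_v^4$ at every step. Once this uniform domination is established, the rest of the argument is routine.
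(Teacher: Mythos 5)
Your proposal follows exactly the route the paper indicates: the paper's ``proof'' of this proposition is the one-line remark that it follows ``from the proofs of Propositions \ref{big t escape-rate formula} and \ref{small t escape-rate formula}, using Proposition \ref{nonarch julia 2},'' and your Laplacian-plus-normalization argument for the first case together with the Proposition~\ref{G transformation} pullback for the second case is a faithful fleshing-out of that. Using the scaling symmetry to get the $|t|_v \leq |16|_v$ case from the $|t|_v \geq 1/|16|_v$ case, rather than redoing the Laplacian computation, is a small economy but fully in the spirit of Proposition~\ref{small t escape-rate formula}, which is itself presented as following similarly. Note also that your substitution correctly produces the middle range $|t/4|_v < |z|_v < |4|_v$; the inequalities in the paper's displayed statement are reversed there (a typo, since $|t/4|_v < |4|_v$ when $|t|_v < |16|_v$), and your derivation silently fixes it.

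One point of your ``technical obstacle'' paragraph has the effect of $|4|_v < 1$ backwards. With $|z_n/w_n|_v > |4t|_v$ one gets $|z_n - w_n|_v = |z_n - t w_n|_v = |z_n|_v$, so $|w_{n+1}|_v = |4|_v\,|z_n|_v^3\,|w_n|_v$ while $|z_{n+1}|_v = |z_n|_v^4$, hence
\[
\left|\frac{z_{n+1}}{w_{n+1}}\right|_v \;=\; \frac{1}{|4|_v}\left|\frac{z_n}{w_n}\right|_v \;>\; \left|\frac{z_n}{w_n}\right|_v .
\]
The factor $4$ appears in $w_{n+1}$, not $z_{n+1}$, and since $|4|_v<1$ it suppresses $|w_{n+1}|_v$; the ratio $|z_n/w_n|_v$ therefore grows monotonically once $|z_0|_v > |4t|_v$ and no extra margin in the starting radius is required. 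The induction is if anything easier than in the $v\nmid 2$ case, and the step you flag as delicate is automatic. This does not create a gap — you correctly assert that ``once this uniform domination is established, the rest is routine'' and it is indeed established — but the stated worry is spurious.
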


\subsection{Proof of Theorem \ref{non-arch energy} for $v\mid 2$}

We compute as in the case where $v\nmid 2$.

\medskip
\noindent{\bf Case (1):} $\{t_1, t_2\}$ with $\min\{|t_1|_v, |t_2|_v\}\geq 1/|16|_v$ and $\max\{|t_1|_v, |t_2|_v\}> 1/|16|_v$. Proposition \ref{escape rate v|2} yields
  $$2\, E_v(t_1,t_2)=\frac{\log^2|t_1/t_2|_v}{3\log \max\{|16t_1|_v, |16t_2|_v\}}\geq \frac{\log^2|t_1/t_2|_v}{3\log \max\{|t_1|_v, |t_2|_v\}}.$$
  
 \medskip
\noindent{\bf Case (2):} $|t_1|_v\geq 1/|16|_v$ and $|t_2|_v\leq |16|_v$.  Again by Proposition \ref{escape rate v|2},
$$2\, E_v(t_1,t_2)=\frac{\log|16t_1|_v-\log |t_2/16|_v}{3}-\log |16|_v\geq \frac{\log|t_1/t_2|_v}{3}.$$

\medskip
\noindent{\bf Case (3):} $|t_1|_v> 1/|16|_v$, $|16|_v\leq |t_2|_v\leq 1/|16|_v$ and $|1-t_2|_v\geq|16|_v$.  Let $\zeta_{t_2}\in \Hyp$ be the support of $\mu_{t_2, v}$. For any $z\in \C_v$ with $|z|_v>1/|4|_v$, 
   $$|f_{t_2}(z)|_v=\frac{|(z^2-t_2)^2|_v}{|4z(z-1)(z-t_2)|_v}>|z|_v.$$
Hence $\zeta_{0, 1/|4|_v}\in [\zeta_{t_2}, \infty)$. Let $z_0\in \C_v$ with $|z_0|_v>1/|4|_v$, and let $(z_n, w_n):=F_{t_2}^n(z_0, 1)$.
From the recursive formula \eqref{recursive relation}, inductively we have $|z_n|=|z_0|_v^{4^n}>|w_n|_v/|4|_v$. Consequently  
  $$\lambda_{t_2, v}(z)=G_{F_{t_2}, v}(z,1)=\lim_{n\to \infty}\frac{\log \|F^n_{t_2}\|_v}{4^n}=\log |z|_v$$
for $z$ with $|z|_v>1/|4|_v$, and then $\lambda_{t_2,v}(\zeta_{0, r})=\log r$ for $r\geq 1/|4|_v$. Moreover, as $\Delta \lambda_{t_2, v}=\delta_{\zeta_{t_2}} - \delta_{\infty}$, the function $\lambda_{t_2,v}$ is increasing at a constant rate along the ray $[\zeta_{t_2}, \infty)$, with respect to the hyperbolic metric. Therefore $\lambda_{t_2, v}(\zeta_{t_2})\leq \lambda_{t_2,v}(\zeta_{0, 1/|4|_v})=-\log|4|_v$. Hence by Propositions \ref{nonarch julia 2} and \ref{escape rate v|2}, 
\begin{eqnarray*} 
2\, E_v(t_1, t_2) &=&  \int_{\P^{1,an}_v} \left(\lambda_{t_1, v} - \lambda_{t_2,v}\right) \, d\mu_{t_2,v} + \int_{\P^{1,an}_v} \left( \lambda_{t_2, v} - \lambda_{t_1, v} \right) \, d\mu_{t_1,v}. \\
&=&  \left(\lambda_{t_1,v}(\zeta_{t_2}) -\lambda_{t_2, v}(\zeta_{t_2})\right) \, \\
&& + \int^{\log|4t_1|_v}_{\log|1/4|_v} \left(x-\frac{1}{2}\left(\frac{(x+\log|4|_v)^2}{\log|16t_1|_v}+\log|t_1|_v\right)\right) \, \frac{dx}{\log|16t_1|_v}\\
&\geq& \frac{\log |16t_1|_v}{3}.
\end{eqnarray*}
Here we have used $\lambda_{t_2, v}(\zeta_{t_2})\leq-\log|4|_v$ and $\lambda_{t_1,v}(\zeta_{t_2})=\frac{1}{2}\log|t_1|_v$ for the last inequality. 
   
Of course, for  $|16|_v\leq |t_i|_v\leq1/|16|_v$ and $|1-t_i|_v\geq |16|_v$ for $i=1,2$, we have 
   $$2\, E_v(t_1, t_2)\; \geq\;  0\; \geq \; \frac{|\log |t_1/t_2|_v|}{3}+\frac{8}{3}\log|2|_v.$$
\medskip \noindent{\bf Case (4):} The remaining cases reduce to the above three by the symmetry relations of Proposition \ref{one sing}. This completes the proof of Theorem \ref{non-arch energy}.

\bigskip
\section{Archimedean places and the hybrid space} \label{archsection}

In this section, we provide some of the estimates we need to control the archimedean contributions to the height pairings.  Throughout this section, we assume our parameter $t \in \C\setminus\{0,1\}$ is complex.  We let $\mu_t$ denote the probability measure on $\P^1(\C)$ which is the push forward of the Haar measure on the Legendre elliptic curve $E_t(\C)$ via $\pi(x,y) = x$.  This measure is also the unique measure of maximal entropy for the dynamical system defined by the Latt\`es map 
	  $$f_t(z)=\frac{(z^2-t)^2}{4z(z-1)(z-t)},$$
as noted in \cite[\S7]{Milnor:Lattes}.
We study degenerations of the probability measures $\mu_t$ and their potentials as $t\to 0$.  (The cases of $t\to 1$ and $t\to \infty$ are similar.)  To this end, we consider the action of $f_t$ sending $(t,z)$ to $(t, f_t(z))$ on the complex surface $X = \D^* \times \P^1(\C)$, where $\D^*$ is the punctured unit disk.  We make use of the hybrid space $X^{hyb}$, in which the Berkovich projective line over the field of formal Laurent series $\C((t))$ creates a central fiber of $X$ over $t=0$ in the unit disk $\D$.  We appeal to the topological description of the hybrid space from \cite{Boucksom:Jonsson:hybrid} and the associated dynamical degenerations described in \cite{Favre:degenerations}.  

\subsection{The family of Latt\`es maps and their escape rates}  \label{Berkovich limit}
In homogeneous coordinates on $\C^2$, recall that the maps $f_t$ may be presented as 
  $$F_t(z,w):=\left( (z^2-tw^2)^2, 4 zw(z-w)(z-tw) \right),$$
for $t\in \C\setminus\{0,1\}$.  They have escape-rate functions
\begin{equation} \label{arch escape rate}
  G_{F_t}(z,w):=\lim_{t\to \infty}\frac{1}{4^n} \log\|F_t^n(z,w)\|,
\end{equation}
as in (\ref{escape rate}).

View the families $f_t$ and $F_t$ as maps $f = f_T$ and $F = F_T$ defined over the field $k = \C(T)$, and consider the non-archimedean absolute value $|\cdot|_0$ on $k$ satisfying $|g(T)|_0 = e^{-\ord_0 g}$.  Let $k_0 = \C((T))$ denote the completion of $\C(T)$ with respect to this absolute value.  Let $\mathbb{L}$ denote a (minimal) complete and algebraically closed field containing $k_0$.  The non-archimedean escape rate $\hat{G}_F$ on $\mathbb{L}^2$ is defined as in \eqref{escape rate}.  Since $|T|_0 < 1,$ it is given for $x \in \mathbb{L}$ by the following formula, exactly as in Proposition \ref{small t escape-rate formula}:
\begin{eqnarray} \label{g hat}
\hat{g}_f(x) := \hat{G}_F(x, 1) &=&  \left\{ \begin{array}{ll}
				\log|x|_0 & \mbox{for }  |x|_0 \geq 1 \\
				\log|x|_0 - \dfrac{(\log |x|_0)^2}{2\log|T|_0}  &  \mbox{for } |T|_0 <  |x|_0 <1  \\
				\dfrac12 \log |T|_0 & \mbox{for } |x|_0 \leq  |T|_0
				\end{array} \right.  \\
	&=& \left\{ \begin{array}{ll}
				-a & \mbox{for }  |x|_0 = |T|^a_0 \mbox{ with } a \leq 0  \\
				-a + \dfrac12 \, a^2  &  \mbox{for } |x|_0 = |T|^a_0 \mbox{ with }  0 \leq a \leq 1  \\
				-\dfrac12  & \mbox{for }  |x|_0  = |T|^a_0 \mbox{ with } a \geq 1
				\end{array} \right.   \nonumber
\end{eqnarray}
The function $\hat{g}_f$ extends naturally to the Berkovich space ${\P}^{1, an}_{\mathbb{L}}$; away from the point at $\infty$, it is a continuous potential for the equilibrium measure $\hat{\mu}_f$ of $f$.  

The potential $\hat{g}_f$ and the measure $\hat\mu_f$ are invariant under the action of $\Gal(\mathbb{L}/k_0)$ on $\P^{1,an}_{\mathbb{L}}$.  They descend to define a function and probability measure  -- that we will also denote by $\hat{g}_f$ and $\hat\mu_f$ -- on the quotient Berkovich line ${\P}^{1, an}_{k_0}$ (see \cite[\S4.2]{Berkovichbook} for details on this quotient map).  As computed in Proposition \ref{non-archi julia 1}, the measure $\hat{\mu}_f$ is supported on the interval $[\zeta_{0, |T|_0}, \zeta_{0,1}]$, and it is uniform with respect the linear structure from the hyperbolic metric.

\subsection{Convergence of measures} \label{measures}
The family $f_t$ acts on the product space $\D^*\times \P^1$ sending $(t,z)$ to $(t, f_t(z))$.  It extends meromorphically to $X_0 := \D\times\P^1$, or indeed to any model complex surface $X \to \D$ which is isomorphic to $\D^*\times\P^1$ over $\D^*$ and has a simple normal crossings divisor as its central fiber.  

Fixing a surface $X \to \D$ and letting $t\to 0$, the degeneration of the measures $\mu_t$ of maximal entropy for $f_t$ -- or indeed for any meromorphic family of rational maps on $\P^1$ -- to the central fiber of $X$ is now well understood.  In \cite{DF:degenerations, DF:degenerations2}, the limit of the measures $\mu_t$ is computed for any choice of model $X$, and a relation is shown between these limits and the non-archimedean measure $\hat{\mu}_f$.  In particular, if we define the annulus
	$$A_t(a,b,C) := \{z \in \C:  C^{-1} |t|^a \leq |z| \leq C |t|^b \}$$
for $t\in \D^*$, $C>1$, and real numbers $a \geq b$, then 
\begin{equation} \label{measures on annuli}
	\mu_t(A_t(a,b,C)) \to \hat\mu_f([\zeta_{0, |T|_0^a}, \zeta_{0, |T|_0^b}]) = \mathrm{length}_{\R}([0,1]\cap[b,a])
\end{equation}
as $t\to 0$.  This follows from \cite[Theorem B]{DF:degenerations} (allowing for changes of coordinates on $\P^1$ and base changes, passing to covers of the punctured disk $\D^*$) or from the computations described in \cite[Theorem D]{DF:degenerations2} (taking $\Gamma$ to be a vertex set in the interval $[\zeta_{0,1}, \zeta_{0, |T|_0}]$).  Another proof is described below in \S\ref{t near 0}.  In particular, this convergence implies:

\begin{lemma} \label{measure decomposition}
Given any $\eps>0$ and integer $n\geq 1$, there exists $\delta>0$ such that 
	$$\frac{1}{n} - \eps < \mu_t( \{ |t|^{(i+1)/n} \leq |z| \leq |t|^{i/n} \}) < \frac{1}{n} + \eps$$
for all $0 < |t| < \delta$ and $i = 0, \ldots, n-1$.
\end{lemma}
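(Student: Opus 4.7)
The statement asserts that the measures $\mu_t$ distribute asymptotically uniformly among the $n$ logarithmic shells partitioning $\{|t| \leq |z| \leq 1\}$ on the $|t|$-adic scale, and this is essentially a direct unpacking of the convergence \eqref{measures on annuli}.

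The plan is to identify the annulus $B_{t,i} := \{|t|^{(i+1)/n} \leq |z| \leq |t|^{i/n}\}$ with $A_t((i+1)/n, i/n, 1)$ and apply \eqref{measures on annuli}. For $i \in \{0, \ldots, n-1\}$ the endpoints satisfy $0 \leq i/n \leq (i+1)/n \leq 1$, so $[0,1] \cap [i/n, (i+1)/n]$ has length exactly $1/n$. Hence for any fixed $C > 1$, \eqref{measures on annuli} gives $\mu_t(A_t((i+1)/n, i/n, C)) \to 1/n$ as $t \to 0$. The issue is that $B_{t,i}$ corresponds to the boundary case $C=1$, which is not covered directly; I would resolve this by a two-sided sandwich.

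For the upper bound, $B_{t,i} \subseteq A_t((i+1)/n, i/n, C)$ for every $C > 1$ and every $t$, so $\limsup_{t \to 0} \mu_t(B_{t,i}) \leq 1/n$. For the lower bound, fix a small $\eta \in (0, 1/(2n))$ and a fixed $C > 1$. The shrunken annulus $A_t((i+1)/n - \eta, i/n + \eta, C)$ equals $\{C^{-1} |t|^{-\eta} \cdot |t|^{(i+1)/n} \leq |z| \leq C |t|^{\eta} \cdot |t|^{i/n}\}$, which lies inside $B_{t,i}$ as soon as $C |t|^{\eta} \leq 1$, i.e., for $|t|$ small enough (depending on $\eta$ and $C$). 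By \eqref{measures on annuli} applied with parameters $((i+1)/n - \eta, i/n + \eta)$, the measure of this smaller annulus converges to $1/n - 2\eta$, so $\liminf_{t \to 0} \mu_t(B_{t,i}) \geq 1/n - 2\eta$.

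Given $\eps > 0$, I would choose $\eta < \eps/2$, and then for each of the finitely many indices $i = 0, \ldots, n-1$ pick a threshold $\delta_i > 0$ on which both the upper and lower convergences above deliver error less than $\eps$. Setting $\delta := \min_i \delta_i$ gives the uniform estimate claimed. The argument is straightforward; the only mildly delicate point is the boundary behavior at $C = 1$, handled by the sandwich above, and there is no substantive obstacle.
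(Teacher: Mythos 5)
Your proof is correct and follows the same route as the paper, which presents Lemma \ref{measure decomposition} as an immediate consequence of \eqref{measures on annuli} without spelling out the details. Your sandwich argument to handle the boundary case $C=1$ is a careful and accurate unpacking of that implication.
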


Taking $\eps = 1/n^2$ in Lemma \ref{measure decomposition}, we observe that for any given $n$, there is a $\delta>0$ such that we also have
\begin{equation} \label{small measure disks}
	\mu_t\bigg( \{|z|\geq 1\} \cup \{|z| \leq |t| \} \bigg) < \frac{1}{n}
\end{equation}
for all $0 < |t| < \delta$.

\subsection{Convergence in the hybrid space} \label{t near 0}

In \cite{Favre:degenerations}, Favre gives an alternate proof of \eqref{measures on annuli} by showing that 
\begin{equation} \label{measure convergence}
	\mu_t \to \hat\mu_f
\end{equation}
weakly in the hybrid space $X^{hyb}$ \cite[Theorem B]{Favre:degenerations}.  The hybrid space consists of replacing the central fiber in the models $X$ above with the Berkovich line $\P^{1, an}_{k_0}$, carrying an appropriate topology.  The convergence of measures follows from the convergence of their potentials to the potential of the measure $\hat\mu_f$ in the Berkovich line.  We describe this convergence here, as we will use it for proving our main result.

Let $m_1$ denote the Lebesgue measure on the unit circle in $\C$, normalized to have total length 1.  Let $\Phi_t(z)$ denote a continuous potential on  $\P^1(\C)$ for the measure $\mu_t - m_1$.  Explicitly, in local coordinates $z\in\C \subset \P^1$, we can take
\begin{equation} \label{Phi}
	\Phi_t(z) = G_{F_t}(z, 1) - \log^+|z|
\end{equation}
with $G_{F_t}$ as in \eqref{arch escape rate}.  
In \cite{Favre:degenerations}, Favre proves that the function 
\begin{equation} \label{continuous potential}
	\phi(t, z) := \frac{\Phi_t(z)}{\log |t|^{-1}}
\end{equation}
extends to define a continuous function on $X^{hyb}$, taking the values of a potential of the limiting measure $\hat\mu_f - \omega_0$ on the central fiber.  Here $\omega_0$ is the delta mass on the Gauss point $\zeta_{0,1}$ of the Berkovich line $\P^{1,an}_{k_0}$.  More precisely, we consider the function
\begin{eqnarray} \label{phi hat}
\hat{\phi}_f(x) &:=&  \left\{ \begin{array}{ll}
				0 & \mbox{for }  |x|_0 \geq 1 \\
				\log|x|_0 - \dfrac{(\log |x|_0)^2}{2\log|T|_0}  &  \mbox{for } |T|_0 <  |x|_0 <1  \\
				\dfrac12 \log |T|_0 & \mbox{for } |x|_0 \leq  |T|_0
				\end{array} \right.  \\
	&=& \left\{ \begin{array}{ll}
				0 & \mbox{for }  |x|_0  = |T|^a_0 \mbox{ with } a \leq 0  \\
				-a + \dfrac12 \, a^2  &  \mbox{for } |x|_0 = |T|_0^a \mbox{ with }  0 \leq a \leq 1  \\
				-\dfrac12  & \mbox{for } |x|_0  = |T|_0^a \mbox{ with } a \geq 1
				\end{array} \right.   \nonumber
\end{eqnarray}
for $x \in \mathbb{L}$, similar to the formula for $\hat{g}_f$ in \eqref{g hat}.  This function $\hat\phi_f$ extends continuously to all of $\P^{1, an}_{\mathbb{L}}$; it is Galois invariant over $k_0$; and it descends to the quotient $\P^{1, an}_{k_0}$.  Favre's theorem implies that the function $\phi$ of \eqref{continuous potential} extends continuously to $X^{hyb}$, coinciding with $\hat{\phi}_f$ over $t=0$:

\begin{prop} \label{uniform continuity}
Given any $\eps>0$, there exists $\delta >0$, such that 
	$$\left| \phi(t,z) - \hat{\phi}_f(\zeta_{0, |T|^a_0}) \right| < \eps$$
for all $0 < |t| < \delta$, for all $a \in \R$, and all $z$ for which 
	$$\left| \frac{\log|z|}{\log|t|} - a \right| < \delta.$$
\end{prop}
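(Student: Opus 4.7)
The plan is to deduce this proposition from Favre's continuity theorem \cite[Theorem B]{Favre:degenerations}, which ensures that the function $\phi(t,z) = \Phi_t(z)/\log|t|^{-1}$ of \eqref{continuous potential} extends continuously to the hybrid space $X^{hyb}$, taking the values $\hat\phi_f$ on the central Berkovich fiber $\P^{1,an}_{k_0}$. The proposition then becomes a uniform continuity statement for this extension. First, I would unpack the hybrid topology of Boucksom--Jonsson \cite{Boucksom:Jonsson:hybrid} to verify the following convergence: for any sequence $(t_n, z_n) \in \D^* \times \C$ with $t_n \to 0$ and $\log|z_n|/\log|t_n| \to a \in \R$, the points $(t_n, z_n)$ converge in $X^{hyb}$ to $\zeta_{0, |T|_0^a}$ when $a \in [0,1]$, to $\zeta_{0,1}$ when $a \leq 0$, and to $\zeta_{0, |T|_0}$ when $a \geq 1$. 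This identification is the standard matching of the rescaled archimedean value $|z|^{1/\log|t|^{-1}}$ to the non-archimedean value $|T|_0^a$ defining the relevant Type II point on the central fiber.

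Once this convergence is in hand, the preimage in $X^{hyb}$ of the closed disk $\{|t|\leq 1/2\}\subset\D$ is compact, so the continuous extension of $\phi$ is uniformly continuous there. This directly yields the $\delta$ in the statement for $a$ ranging over any fixed compact interval $[-M, M]$. For the tails $|a|\geq M$ with $M$ large, the target value $\hat\phi_f(\zeta_{0, |T|_0^a})$ is constant, equal to $0$ for $a\leq 0$ and to $-\frac{1}{2}$ for $a\geq 1$, so it suffices to estimate $\phi(t,z)$ directly in the corresponding regions. On $\{|z|\geq 1\}$, the coefficients of $F_t$ are bounded uniformly for $|t|\leq 1/2$, hence $G_{F_t}(z,1) = \log|z| + O(1)$ uniformly in $t$, which gives $\phi(t,z) = O(1/|\log|t||) \to 0$ uniformly as $t\to 0$. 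For $\{|z|\leq |t|\}$ I would invoke the Latt\`es symmetry $f_t(t/z) = f_t(z)$, which implies $\lambda_t(z) = \lambda_t(t/z) + c_t$ for an explicit constant $c_t$ (arising from the mismatch of logarithmic singularities at $0$ and $\infty$); this reduces the analysis to the previous case and gives the uniform limit $-\frac{1}{2}$.

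The main obstacle is the careful verification of the hybrid convergence at the boundary values $a=0$ and $a=1$, where $\hat\phi_f$ transitions between its three formulas in \eqref{phi hat}, together with the precise matching of Favre's abstract continuity to the quantitative formulation in terms of the parameter $a$. This is essentially bookkeeping with the definitions in \cite{Boucksom:Jonsson:hybrid, Favre:degenerations}, but it is where the technical content lies; beyond this, the proposition follows cleanly by compactness and uniform continuity, using that both $\phi$ and $\hat\phi_f$ are explicit (from \eqref{Phi} and \eqref{phi hat}) and that the parabolic profile of $\hat\phi_f$ on $a\in[0,1]$ reflects the degree-$2$ tent-map action of $f_t$ on the corresponding annular region of $\P^1(\C)$ described in \S\ref{measures}.
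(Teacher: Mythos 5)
Your overall strategy matches the paper's: invoke Favre's result that $\phi$ extends continuously to the hybrid space, then use compactness to obtain a uniform $\delta$. The difference is in how the unbounded range of $a$ is handled. The paper applies compactness directly to the closed segment $L$ joining $0$ to $\infty$ in $\P^{1,an}_{k_0}$ and covers it by finitely many annular neighborhoods together with two disk-like neighborhoods of $0$ and $\infty$; those disk-like neighborhoods are precisely the sets $\{a > a_0\}$ and $\{a < -a_0\}$ and automatically absorb the tails because $\hat\phi_f$ is constant there, so no supplementary estimates are needed. You instead extract a $\delta$ only for $a$ in a compact interval $[-M,M]$ and then prove the tail behavior by hand. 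That can be made to work, but two details in your tail argument need correction. First, your description of the hybrid limit is off: a sequence $(t_n,z_n)$ with $\log|z_n|/\log|t_n|\to a$ converges to $\zeta_{0,|T|_0^a}$ for \emph{every} real $a$, not to $\zeta_{0,1}$ when $a\leq 0$ or to $\zeta_{0,|T|_0}$ when $a\geq 1$ (the hybrid topology is Hausdorff, and the annular neighborhoods of $\zeta_{0,|T|_0^a}$ already capture these sequences). Only the \emph{value} $\hat\phi_f(\zeta_{0,|T|_0^a})$ is constant for $a\leq 0$ and $a\geq 1$, which is the fact actually used. Second, the asserted uniform bound $G_{F_t}(z,1)=\log|z|+O(1)$ on $\{|z|\geq 1\}$ for $|t|\leq 1/2$ is stronger than what is needed and not clearly correct near $|z|=1$ as $t\to 0$ (the family $f_t$ degenerates there); the statement you actually need, $\phi(t,z)\to 0$ uniformly on $\{|z|\geq 1\}$, follows already from the hybrid-space continuity via the disk-like neighborhood of $\infty$, which is exactly what the paper uses and what sidesteps this difficulty. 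Your Latt\`es-symmetry reduction $f_t(t/z)=f_t(z)$ for the region $|z|\leq |t|$ is correct and yields $\lambda_t(z)=\lambda_t(t/z)+\log|z|-\tfrac12\log|t|$, giving the uniform limit $-\tfrac12$ there, so that part of the tail argument is sound.
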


\begin{proof}    
Recall that the absolute value $|\cdot |_0$ on $\mathbb{L}$ induces a continuous function on the Berkovich space that we will also denote by $|\cdot |_0 :  \P^{1,an}_{k_0} \to \R_{\geq 0}\cup\{\infty\}$.  We use the standard absolute value $|\cdot|$ on $\C$, extended to a continuous function $\P^1(\C) \to \R_{\geq 0} \cup \{\infty\}$.

The topology on $\P^{1, an}_{k_0}$ is such that annuli of the form
	$$A(r_1, r_2) := \{x \in \P^{1,an}_{k_0} :  r_1 < |x|_0 < r_2\}$$
are open for any choice of $0\leq r_1 < r_2 \leq \infty$, as are the Berkovich disks of the form 
	$$D_0(r) :=  \{x \in \P^{1,an}_{k_0} :  |x|_0 < r\} \mbox{ and } D_\infty(r) := \{x \in \P^{1,an}_{k_0} :  |x|_0 > r\}$$
for any $0 < r < \infty$.  The topology on $X^{hyb}$ is such that an annular set of the form
	$$\{(t,z) \in \D^*\times\P^1(\C):  |t|^{a+\delta} < |z| < |t|^{a-\delta} \mbox{ and } 0 < |t| < \delta\} \cup A(|T|^{a+\delta}_0, |T|^{a-\delta}_0)$$
is an open neighborhood of $\zeta_{0,|T|_0^a}$ on the central fiber for any $a$ and any $\delta > 0$.  Similarly, the disk-like sets 
	$$\{(t,z) \in \D^*\times\P^1(\C):  |z| < |t|^a \mbox{ and } 0 < |t| < \delta\} \cup D_0( |T|^a_0)$$
and
	$$\{(t,z) \in \D^*\times\P^1(\C):  |z| > |t|^a \mbox{ and } 0 < |t| < \delta\} \cup D_\infty(|T|^a_0)$$
are open for any $a\in \R$, and allowing $a$ and $\delta$ to vary provides open neighborhoods at $0$ and $\infty$ respectively in the central fiber.  See \cite[\S2.2 and Definition 4.9]{Boucksom:Jonsson:hybrid} for details on the hybrid topology.  Note in particular that the hybrid topology restricted to the central fiber induces the usual (weak) Berkovich topology.
	
By the continuity statement of \cite[Theorem 2.10]{Favre:degenerations} and exhibiting $\phi$ as a uniform limit of model functions (\cite[Section 4.3]{Favre:degenerations} provides the details in the dynamical case) the function $\phi$ extends to define a continuous function on $X^{hyb}$, taking the values of $\hat{\phi}_f$ on the central fiber.  Let $L$ denote the closed segment in $\mathbb{P}^{1,an}_{k_0}$ between $0$ and $\infty$.  We may by compactness cover $L$ by finitely many neighborhoods on which $|\phi(x) - \phi(y)| \leq \epsilon.$  As the values of $\hat{\phi}_f$ depend only on the values of $\phi$ on $L$, each open neighborhood of a point in the interior of $L$ contains an open interval in $L$, and $\hat{\phi}$ is constant near $0$ and $\infty$, we may assume these neighborhoods are annular or disk-like as defined above.  Thus we obtain a uniform $\delta$ as claimed.
\end{proof}

As $\Phi_t(z) = G_{F_t}(z,1) - \log^+|z|$, we also have a uniform continuity statement for $G$ when $|z|$ is bounded from above:

\begin{prop} \label{uniform continuity for G}
Given any $\eps>0$ and $M > 1$, there exists $\delta >0$ such that 
	$$\left| \frac{G_{F_t}(z,1)}{\log|t|^{-1}} - \hat{g}_f(\zeta_{0, |T|^a_0}) \right| < \eps$$
for all $0 < |t| < \delta$, for all $a \in \R$, and all $|z|\leq M$ for which 
	$$\left| \frac{\log|z|}{\log|t|} - a \right| < \delta.$$
\end{prop}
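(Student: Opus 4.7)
The plan is to reduce the statement to Proposition \ref{uniform continuity} via compatible decompositions on the archimedean and non-archimedean sides. Dividing the defining identity $\Phi_t(z) = G_{F_t}(z,1) - \log^+|z|$ from \eqref{Phi} by $\log|t|^{-1} > 0$ gives
\[
\frac{G_{F_t}(z,1)}{\log|t|^{-1}} = \phi(t,z) + \frac{\log^+|z|}{\log|t|^{-1}},
\]
while a direct termwise comparison of the case-by-case formulas \eqref{g hat} and \eqref{phi hat}, together with the observation $\log|\zeta_{0,|T|^a_0}|_0 = -a$, yields the parallel identity
\[
\hat{g}_f\bigl(\zeta_{0, |T|^a_0}\bigr) = \hat{\phi}_f\bigl(\zeta_{0, |T|^a_0}\bigr) + \max(-a, 0).
\]

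Setting $b := \log|z|/\log|t|$, so that $\log^+|z|/\log|t|^{-1} = \max(-b, 0)$, the quantity to be estimated splits as
\[
\bigl(\phi(t,z) - \hat{\phi}_f(\zeta_{0, |T|^a_0})\bigr) + \bigl(\max(-b, 0) - \max(-a, 0)\bigr).
\]
Given $\eps > 0$, we apply Proposition \ref{uniform continuity} with $\eps/2$ in place of $\eps$ to obtain a $\delta_1 > 0$ bounding the first summand by $\eps/2$ whenever $|t| < \delta_1$ and $|b - a| < \delta_1$. For the second summand, the function $s \mapsto \max(-s, 0)$ is $1$-Lipschitz, so it is bounded by $|b - a|$. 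Taking $\delta := \min(\delta_1, \eps/2)$ and invoking the triangle inequality then completes the argument.

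The reduction is essentially formal: the only step requiring real work is Proposition \ref{uniform continuity} itself, which in turn relies on Favre's continuity theorem for the hybrid space $X^{hyb}$. The upper bound $|z| \leq M$ does not enter the argument above in an essential way, but is retained in the statement because the result will be applied in Section \ref{arch energies} in regimes where $z$ is kept in a bounded region of $\P^1(\C)$; in this bounded regime no complication arises from the behavior of $G_{F_t}(z,1)$ as $z \to \infty$.
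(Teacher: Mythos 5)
Your proof is correct and follows the same route the paper takes implicitly: the paper introduces the proposition with the one-line remark ``As $\Phi_t(z) = G_{F_t}(z,1) - \log^+|z|$, we also have a uniform continuity statement for $G$ when $|z|$ is bounded from above,'' and you are simply filling in that reduction to Proposition \ref{uniform continuity}. Your pairing of $\log^+|z|/\log|t|^{-1} = \max(-b,0)$ with the case-by-case discrepancy $\hat{g}_f - \hat{\phi}_f = \max(-a,0)$, and the $1$-Lipschitz estimate, are verified correctly; your observation that the hypothesis $|z|\leq M$ is not actually needed (since $z=\infty$ is automatically excluded by requiring $a\in\R$) is a sound bonus, though harmless to omit.
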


\subsection{Discrete measures and regularizations}
Let $F$ be any finite set in $\C$.  Denote by $[F]$ the probability measure supported equally on the elements of $F$, and for $r>0$, denote by $[F]_r$ the probability measure supported equally and uniformly on circles of radius $r$ about each element of $F$.  

\begin{prop}\label{arch regularization}
For every $\eps>0$, there exists $c = c(\eps) >0$ such that
	$$\left| (\mu_t, [F]) - (\mu_t, [F]_r) \right| \; < \; \eps \; \max\{ \log |t|^{-1}, \log |t-1|^{-1}, \log|t|, 1 \}$$
for all $t\in \C\setminus\{0,1\}$ and any finite set $F$ in $\C$ and any
	$$r \leq c \, \min\{|t|^2, |t-1|^2, |t|^{-2}\}.$$
\end{prop}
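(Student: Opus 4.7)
The approach is to convert the difference $(\mu_t,[F])-(\mu_t,[F]_r)$ into a modulus-of-continuity estimate for the local height $\lambda_t$, then to apply the hybrid-space continuity of Propositions~\ref{uniform continuity} and~\ref{uniform continuity for G}, together with the symmetries of Proposition~\ref{G transformation}, to control that modulus uniformly as $t$ approaches any of the singular parameters $0,1,\infty$.

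First, the classical identity $\int\log|z-\zeta|\,dm_r^w(\zeta)=\log\max(|z-w|,r)$, where $m_r^w$ is the uniform probability on the circle of radius $r$ about $w$, gives
\[
(\mu_t,[F]_r)-(\mu_t,[F])=\frac{1}{|F|}\sum_{w\in F}\int_{D(w,r)}\log\frac{|z-w|}{r}\,d\mu_t(z).
\]
Since $\Delta\lambda_t=\mu_t-\delta_\infty$ and $\infty\notin D(w,r)$, Poisson--Jensen yields
\[
\int_{D(w,r)}\log\frac{r}{|z-w|}\,d\mu_t(z)=\frac{1}{2\pi}\int_0^{2\pi}\lambda_t(w+re^{i\theta})\,d\theta-\lambda_t(w),
\]
which is bounded in absolute value by $\omega_t(w,r):=\sup_{|z-w|=r}|\lambda_t(z)-\lambda_t(w)|$. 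It therefore suffices to show that for each $\eps>0$ there exists $c>0$ so that $\omega_t(w,r)\leq \eps M(t)$ for every $w\in\C$ and $r\leq c\min\{|t|^2,|t-1|^2,|t|^{-2}\}$, where $M(t):=\max\{\log|t|^{-1},\log|t-1|^{-1},\log|t|,1\}$.

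Next, we reduce to the case $|t|\to 0$ using Proposition~\ref{G transformation}, which yields $\lambda_t(z_1)-\lambda_t(z_2)=\lambda_{1-t}(1-z_1)-\lambda_{1-t}(1-z_2)=\lambda_{1/t}(z_1/t)-\lambda_{1/t}(z_2/t)$. The first identity sends $D(w,r)$ isometrically to $D(1-w,r)$, swapping $t=0$ and $t=1$. For the second, when $|t|\geq 1$ the condition $r\leq c|t|^{-2}$ forces $r/|t|\leq c|t|^{-3}\leq c|1/t|^2$, so the $|t|\to\infty$ regime reduces to $|1/t|\to 0$. In the compact regime (where $t$ is bounded away from $\{0,1,\infty\}$) joint continuity of $(t,z)\mapsto\lambda_t(z)-\log^+|z|$, together with $\lambda_t(z)-\log|z|\to c_\infty(t)$ uniformly for $|t|$ bounded, gives $\omega_t(w,r)<\eps$ for $r$ sufficiently small. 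It remains to prove $\omega_t(w,r)<\eps\log|t|^{-1}$ for $|t|\leq\delta_0$ small. Fixing $\eps>0$, apply Propositions~\ref{uniform continuity for G} (with $M=4$) and~\ref{uniform continuity}, both with accuracy $\eps/6$, to obtain a common $\delta>0$, and choose $c,\delta_0$ small. For $w\in\C$ and $|z-w|\leq r\leq c|t|^2$, we split according to $|w|$. \emph{Case (i):} If $|w|\leq|t|$, then $|z|\leq 2|t|\leq 2$, and both $a_z,a_w:=\log|\cdot|/\log|t|$ are at least $1-O(1/\log|t|^{-1})$; since $\hat g_f\equiv -\tfrac12$ on $[1,\infty)$ and is $1$-Lipschitz near $1$, Proposition~\ref{uniform continuity for G} applied at $z$ with $a=a_z$ and at $w$ with $a=a_w$ gives $|\lambda_t(z)-\lambda_t(w)|<(\eps/2)\log|t|^{-1}$. \emph{Case (ii):} If $|t|\leq|w|\leq 3$, then $|\log(|z|/|w|)|\leq 2r/|w|\leq 2c|t|$, so $|a_z-a_w|\leq 2c|t|/\log|t|^{-1}<\delta$; Proposition~\ref{uniform continuity for G} at both $z,w$ with the single choice $a=a_w$ gives $|\lambda_t(z)-\lambda_t(w)|<(\eps/3)\log|t|^{-1}$. \emph{Case (iii):} If $|w|\geq 3$, then $|z|,|w|\geq 1$ and $a_z,a_w\leq 0$; applying Proposition~\ref{uniform continuity} at $a=a_z$ and $a=a_w$ and using $\hat\phi_f\equiv 0$ on $\{a\leq 0\}$ yields $|\Phi_t(z)|,|\Phi_t(w)|<(\eps/6)\log|t|^{-1}$, so with $\lambda_t=\Phi_t+\log^+|\cdot|$ we obtain $|\lambda_t(z)-\lambda_t(w)|\leq|\Phi_t(z)-\Phi_t(w)|+|\log|z|-\log|w||<(\eps/3)\log|t|^{-1}+r<\eps\log|t|^{-1}$.

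The main obstacle is obtaining uniformity in $w$: since $f_t$ is a Latt\`es map whose Julia set is all of $\P^1(\C)$, the measure $\mu_t$ may place mass on disks centered at arbitrarily large $w$, which falls outside the hypothesis $|z|\leq M$ of Proposition~\ref{uniform continuity for G}. This is exactly why Case (iii) is handled via Proposition~\ref{uniform continuity} for the renormalized potential $\phi$ on the full hybrid space $X^{hyb}$: the limit function $\hat\phi_f$ is identically zero on the Berkovich neighborhood of $\infty$ corresponding to the ray $\{a\leq 0\}$, giving control without any bound on $|z|$. With Cases (i)--(iii) in hand, the three singular regimes combine through the symmetries of the preceding paragraph to give the bound.
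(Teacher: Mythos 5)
Your proof is correct and follows essentially the same route as the paper: both reduce the quantity $\bigl|(\mu_t,[F])-(\mu_t,[F]_r)\bigr|$ to a modulus-of-continuity estimate for the local potential $\lambda_t = G_{F_t}(\cdot,1)$, both invoke Propositions~\ref{uniform continuity} and~\ref{uniform continuity for G} to get the required uniformity as $t$ degenerates, and both use the symmetries from Proposition~\ref{G transformation} to handle the three cusps $\{0,1,\infty\}$. The paper passes directly from $(\mu_t,[F]_r)-(\mu_t,[F])=\frac{1}{|F|}\sum_x\bigl(G_{F_t}(x,1)-\int G_{F_t}\,dm_{x,r}\bigr)$ and exploits harmonicity of $\log^+|\cdot|$ to replace $G_{F_t}$ by $\Phi_t$ for $|x|>2$; you make the same object explicit via Poisson--Jensen as an oscillation $\omega_t(w,r)$, which is a tidier bookkeeping device but not a different mechanism. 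Your case split is by $|w|$ at thresholds $|t|$ and $3$, while the paper splits by $|z|$ at $|t|^{1+\delta}$ and $2$; these are equivalent after shrinking $c$, and your handling of the two edge regimes (where $\hat g_f$ or $\hat\phi_f$ is constant) matches the paper's. One small point you should tighten: in Case~(i) the center $w$ may satisfy $|w|\le r$, in which case the point $z$ on the circle $|z-w|=r$ can have $|z|$ arbitrarily small (or even $z=0$), so $a_z$ is unbounded above and Proposition~\ref{uniform continuity for G} cannot be invoked literally with $a=a_z$ when $z=0$; this is harmless because $\hat g_f$ is exactly $-\tfrac12$ for all $a\ge1$ and $G_{F_t}(0,1)=\tfrac12\log|t|$, but the argument is cleaner if you apply the proposition with a single $a\ge1$ and note that both $\log|z|/\log|t|$ and $\log|w|/\log|t|$ exceed $1-O(1/\log|t|^{-1})$, which the paper avoids by taking the cutoff at $|t|^{1+\delta}$.
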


\begin{proof}
For any $x \in \C$ and any $r>0$, let $m_{x,r}$ be the probability measure supported on the circle of radius $r$ around $x$.  Recall that 
$$(\rho, \sigma) := - \iint_{\C\times\C \setminus \Delta} \log |z - w| \ d \rho(z) \ d \sigma(w).$$
For each fixed $t$, the function $G_{F_t}(\cdot, 1)$ is a potential for $\mu_t$ in $\C$, and therefore, there exists a constant $C_t$ such that 
	$$\int_\C \log|z-w| \, d\mu_t(z) = G_{F_t}(w,1) + C_t.$$
Now let $F$ be any finite set in $\C$.   Then, assuming $r < 1$, we have
\begin{eqnarray*}
(\mu_t, [F]_r) - (\mu_t, [F]) 
	&=& \frac{1}{|F|} \sum_{x \in F} \left( G_{F_t}(x, 1) - \int G_{F_t}(\zeta, 1) \, dm_{x,r}(\zeta) \right) \\
	&=& \frac{1}{|F|}  \sum_{\{x \in F: |x| > 2\}} \left( \Phi_t(x) - \int \Phi_t(\zeta) \, dm_{x,r}(\zeta) \right) \\
	&& \; + \quad \frac{1}{|F|}  \sum_{\{x \in F:  |x| \leq 2\}} \left( G_{F_t}(x, 1) - \int G_{F_t}(\zeta, 1) \, dm_{x,r}(\zeta) \right)
\end{eqnarray*}
because the function $\log^+|z|$ is harmonic away from the unit circle on $\C$.  

By Proposition \ref{uniform continuity for G}, there exists $\delta > 0$ such that 
\begin{equation} \label{G limit a}
	\left| \frac{G_{F_t}(z,1)}{\log|t|^{-1}} - \hat{g}_f(\zeta_{0, |T|^a_0}) \right| < \eps/2
\end{equation}
for all $|z| \leq 2$ satisfying 	
	$$\left| \frac{\log|z|}{\log|t|} - a \right| < \delta$$
and all $|t| < \delta$ and any $a\in \R$.  Shrinking $\delta$ if needed, we have 
\begin{equation} \label{Phi limit 0}
	\left| \phi(t,z) \right| < \eps/2
\end{equation}
for $|z| \geq 1$ and all $|t| < \delta$, by Proposition \ref{uniform continuity}.  

Let $C_\delta$ be the compact subset of $\C\setminus\{0,1\}$ consisting of all $t$ with $|t| \geq \delta$ and $|t-1| \geq \delta$ and $|1/t|\geq \delta$.  Over $C_\delta \times \mathbb{P}^1$, the family of potentials $\{\Phi_t\}$ is uniformly continuous.  So there exists $c_1 = c_1(\delta)$ such that 
	$$|\Phi_t(z) - \Phi_t(z')| < \eps$$
whenever $\mathrm{dist}(z,z') < c_1$ and for all $t \in C_\delta$.  Here, $\mathrm{dist}$ represents the chordal distance on $\P^1$.  Furthermore, we may take $c_1$ such that we also have 
	$$|G_{F_t}(z,1) - G_{F_t}(z',1)| < \eps$$
for all $|z-z'| < c_1$ with $|z| \leq 2$, and all $t \in C_\delta$.  Thus
	$$|(\mu_t, [F]_r) - (\mu_t, [F])| < \eps$$
for any choice of finite set $F$, $t\in C_\delta$, and $r < c_1$.  

Now assume that $|t| < \delta$. We will consider three cases.  First, suppose $|t|^{1+\delta} \leq |z| \leq 2$.  Choose any $c_2 = c_2(\delta)$ such that 
\begin{equation} \label{choice of c2}
	\left|  \log(1 \pm c_2 \delta^{1-\delta}) \right| <  \delta (\log \delta^{-1}).
\end{equation}
Then
\begin{eqnarray*}
\left| \log|z'/z| \right| &=& \left| \log \left|  \frac{z'-z}{z} + 1 \right| \right| \\
	&\leq& \max\left\{ \log \left(1 + \left| \frac{z'-z}{z}\right| \right), \left| \log \left(1 - \left| \frac{z'-z}{z}\right| \right) \right| \right\}  \\
	&\leq& \max \left|  \log \left( 1 \pm \frac{ c_2 |t|^2 }{|z|}  \right)  \right| \\
	&\leq&  \max  \left|  \log \left( 1 \pm c_2 \delta^{1-\delta} \right) \right|  \\
	&<&  \delta (\log \delta^{-1}) \; \leq \; \delta \log|t|^{-1}
\end{eqnarray*}
for all $|t| < \delta$. This is equivalent to

\begin{equation} \label{a and a'}
	\left| \frac{\log|z|}{\log|t|} - \frac{\log|z'|}{\log|t|} \right| < \delta
\end{equation}
for all $|z-z'| < c_2 |t|^2$ with $|t|^{1+\delta} \leq |z| \leq 2$.  Combined with \eqref{G limit a} and setting $a = (\log|z|)/(\log|t|)$, this implies that 
	$$|G_{F_t}(z,1) - G_{F_t}(z',1)| < \eps \log|t|^{-1}$$
for such pairs $z$ and $z'$.  

Second, suppose that $|z| \leq |t|^{1+\delta}$.  By shrinking $c_2$ further if necessary, we have $c_2 < (1 - \delta^\delta)/\delta$, and therefore if $|z| \leq |t|^{1+\delta}$ and $|z- z'| < c_2 |t|^2$, with $|t| < \delta$, we also have $|z'| \leq |t|$.  Applying the convergence \eqref{G limit a} where $\hat{g}_f = -1/2$, for all $|z| \leq |t|^{1+\delta}$ we have 
	$$|G_{F_t}(z,1) - G_{F_t}(z',1)| < \eps \log|t|^{-1}$$
for $z'$ satisfying $|z- z'| < c_2 |t|^2$ and for all $|t| < \delta$.  

Third, for $|z|\geq 2$, by the convergence \eqref{Phi limit 0},
	$$|\Phi_t(z) - \Phi_t(z')| < \eps \log|t|^{-1}$$
for all $|z| \geq 2$ and $|z-z'| < c_2 |t|^2$ and $|t|< \delta$.  

Together these three cases yield 
	$$|(\mu_t, [F]_r) - (\mu_t, [F])| < \eps \log|t|^{-1}$$
for any choice of finite set $F$ and all $|t| < \delta$, with $r < c_2 |t|^2$.  

If $|t-1| < \delta$, the arguments above go through by replacing $z$ with $1-z$, as
		$$G_{F_{1-t}}(1-z, 1) = G_{F_t}(z,1)$$
by Proposition \ref{G transformation}.  It follows that 
	$$|(\mu_t, [F]_r) - (\mu_t, [F])| < \eps \log|t-1|^{-1}$$
for any choice of finite set $F$, $|t-1| < \delta$, and $r < c_2 \,  |t-1|^2$, with $\delta$ and $c_2$ as above.

For $t$ near $\infty$, more care is needed, as
	$$G_{F_t}(z,1) =  G_{F_{1/t}}(z, t) = G_{F_{1/t}}(z/t, 1) + \log|t|$$
by Proposition \ref{G transformation}.   Setting $s = 1/t$,
	$$\frac{G_{F_t}(z,1)}{\log|t|} = \frac{G_{F_s}(s\, z, 1) - \log |s|}{\log |s|^{-1}} = \frac{G_{F_s}(s \, z, 1)}{\log |s|^{-1}} + 1.$$
From \eqref{G limit a}, we have 
	$$	\left| \frac{G_{F_s}(s\,z,1)}{\log|s|^{-1}} - \hat{g}_f(\zeta_{0, |T|^{1-a}_0}) \right| < \eps/2 $$
for $|sz| \leq 2$, $|s| < \delta$, and
	$$\left| \frac{\log|sz|}{\log|s|} - (1-a) \right| < \delta,$$
for any choice of $a\in \R$.  Thus,
\begin{equation} \label{G limit infinity}
	\left| \frac{G_{F_t}(z,1)}{\log|t|} - \left(\hat{g}_f(\zeta_{0, |T|^{1-a}_0}) + 1\right) \right| < \eps/2
\end{equation}
for all $|z| \leq 2|t|$ satisfying 	
	$$\left| \frac{\log|z|}{\log|t|} - a \right| < \delta$$
with $|t| > \delta^{-1}$ and any $a\in \R$.  As in \eqref{Phi limit 0}, we also have 
	$$	\left| \phi(t,z) \right| < \eps/2 $$
for $|z| \geq |t|$ and $|t| > \delta^{-1}$, because $\hat{g}_f(\zeta_{0, |T|^{1-a}_0}) + 1 = a$ for all $a\geq 1$, from the formula given in \eqref{g hat}.  The choice of $c_2$ in \eqref{choice of c2} is similar.    It follows that 
	$$|(\mu_t, [F]_r) - (\mu_t, [F])| < \eps \log|t|$$
for any choice of finite set $F$ and all $|t| > 1/\delta$, with $r < c_2 \,  |t|^{-2}$.  

Let $c := \min \{ c_1, c_2 \}$ to complete the proof.
\end{proof}

\bigskip
\section{Archimedean energy}
\label{arch energies}

As in Section \ref{archsection}, assume $t \in \C\setminus\{0,1\}$ is a complex parameter, with $\mu_t$ on $\P^1(\C)$ the push-forward of the Haar measure on $E_t(\C)$, and $\lambda_t(z) = G_{F_t}(z,1)$ a potential for $\mu_t - \delta_\infty$ on $\P^1(\C)$.  In this section we provide estimates on the archimedean local energy (introduced in Proposition \ref{local energy})
	$$E_\infty(s,t) := \frac12 \left( \int ( \lambda_s - \lambda_t)  \, d\mu_{t} + \int (\lambda_t - \lambda_s) \, d\mu_{s} \right),$$
for $s, t \in \C\setminus \{0,1\}$ as one or both of the parameters tends to 0, 1, or $\infty$.    We treat three cases separately:  where only one parameter escapes into a cusp, where both parameters escape into a cusp, and where the two parameters head to two different cusps.  By the symmetry established in Proposition \ref{one sing}, we focus on the case where $s$ tends to 0.  

Throughout, we work in hybrid space and make use of the convergence of potentials to $\hat{g}_f$ and measures to $\hat{\mu}_f$ as $t\to 0$, as proved in \cite[Theorem B]{Favre:degenerations} and explained in \S\ref{t near 0}.

\subsection{A single escaping parameter}

\begin{theorem} \label{s to 0}
Given $\eps>0$ and any compact set $C \subset \C\setminus\{0,1\}$, there exists $\delta>0$ such that
$$\left( \frac{1}{6} - \eps \right)  \log |s|^{-1} \leq E_\infty(s,t) \leq \left( \frac{1}{6}  + \eps \right)  \log |s|^{-1}$$
for all $s$ satisfying $0 <  |s|  < \delta$ and all $t\in C$.
\end{theorem}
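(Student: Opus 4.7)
The plan is to expand
\[
2E_\infty(s,t) \;=\; J(s,t) + K(s,t) - I(s) - M(t),
\]
where $I(s) := \int \lambda_s\, d\mu_s$, $M(t) := \int \lambda_t\, d\mu_t$, $J(s,t) := \int \lambda_s\, d\mu_t$, and $K(s,t) := \int \lambda_t\, d\mu_s$, and to show that $-I(s)$ carries the full main term of order $(1/3)\log|s|^{-1}$ while $J$, $K$, and $M$ are each $o(\log|s|^{-1})$ uniformly for $t \in C$.

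First I would compute $I(s)$ in closed form, exploiting the Latt\`es structure. The log-homogeneity of $G_{F_s}$ combined with the defining formula for $F_s$ yields the functional equation $\lambda_s \circ f_s(z) = 4\lambda_s(z) - \log|4z(z-1)(z-s)|$; integrating against the $f_s$-invariant measure $\mu_s$ gives $3I(s) = \int \log|4z(z-1)(z-s)|\, d\mu_s$. Because the normalization constant $c_\infty(s) := \lim_{|z|\to\infty}(\lambda_s(z) - \log|z|) = G_{F_s}(1,0)$ vanishes (the point $(1,0)$ being a fixed point of $F_s$), the potential representation $\lambda_s(a) = \int \log|z-a|\, d\mu_s(z)$ holds for every $a \in \P^1(\C)$: the two sides have the same Laplacian on $\P^1(\C)$ and agree at infinity. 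Direct iteration of $F_s$ starting from $(0,1)$, $(1,1)$, and $(s,1)$---each of whose second coordinate vanishes after one application---gives $\lambda_s(0)=\tfrac12\log|s|$, $\lambda_s(1)=\tfrac12\log|1-s|$, and $\lambda_s(s)=\tfrac12\log|s(s-1)|$, so
\[
I(s) \;=\; \tfrac{1}{3}\bigl(\log 4 + \lambda_s(0) + \lambda_s(1) + \lambda_s(s)\bigr) \;=\; \tfrac{1}{3}\log|4s(1-s)| \;=\; -\tfrac{1}{3}\log|s|^{-1} + O(1).
\]

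Next I would bound the remaining terms. Joint continuity of $(t,z)\mapsto \lambda_t(z)$ off $\infty$ together with continuous dependence of $\mu_t$ on $t$ shows that $M(t)$ is uniformly bounded on $C$. Fubini applied to the potential representation from the first step gives $J(s,t) = \iint \log|z-w|\, d\mu_s(w)\, d\mu_t(z) = K(s,t)$, so it suffices to control $K$. Writing $\lambda_t = \log^+|z| + \tilde\lambda_t$ with $\tilde\lambda_t$ bounded continuous on $\P^1(\C)$ uniformly for $t\in C$, one has $\int \tilde\lambda_t\, d\mu_s = O(1)$, and the remaining piece is $\int \log^+|z|\, d\mu_s = I(s) - \int \tilde\lambda_s\, d\mu_s$. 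Here I would invoke Favre's hybrid-space result: Proposition \ref{uniform continuity} combined with the weak convergence $\mu_s \to \hat\mu_f$ in $X^{hyb}$ from \eqref{measure convergence} gives
\[
\frac{1}{\log|s|^{-1}} \int \tilde\lambda_s\, d\mu_s \;\longrightarrow\; \int \hat\phi_f\, d\hat\mu_f \;=\; \int_0^1 \left(-a + \tfrac{a^2}{2}\right) da \;=\; -\tfrac{1}{3},
\]
using the parametrization $\zeta_{0,|T|_0^a}$ of the interval $[\zeta_{0,1},\zeta_{0,|T|_0}]$ supporting $\hat\mu_f$ and the formula for $\hat\phi_f$ in \eqref{phi hat}. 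Since this asymptotic matches that of $I(s)/\log|s|^{-1}$, we conclude $\int \log^+|z|\, d\mu_s = o(\log|s|^{-1})$, and hence $K(s,t) = o(\log|s|^{-1})$ uniformly on $C$.

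Substituting into the four-term expansion yields $2E_\infty(s,t) = (1/3)\log|s|^{-1} + o(\log|s|^{-1})$ uniformly for $t\in C$, which gives the claimed upper and lower bounds for $s$ sufficiently close to $0$. I expect the main obstacle to be the hybrid-space limit for $\int \tilde\lambda_s\, d\mu_s$: the integrand varies with $s$, so one must interpret the function $\phi$ of \eqref{continuous potential} as a continuous function on $X^{hyb}$ whose restriction to the central fiber is $\hat\phi_f$, and pair this against the weak convergence of $\mu_s$ to $\hat\mu_f$ established by Favre. The uniform continuity statement of Proposition \ref{uniform continuity} is essential for converting the pointwise formulas of \eqref{phi hat} into this integral identity.
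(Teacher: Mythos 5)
Your proof is correct and reaches the same conclusion, but the packaging differs from the paper's in a way worth noting. The paper never touches the full local heights $\lambda_s,\lambda_t$: it exploits the cancellation $\lambda_s-\lambda_t=\Phi_s-\Phi_t$ to write $2E_\infty(s,t)$ entirely in terms of the bounded potentials $\Phi_s,\Phi_t$, and then (i) treats $\Phi_t/\log|s|^{-1}$ as uniformly small for $t\in C$, (ii) shows $\int\phi(s,z)\,d\mu_t$ is small by combining Proposition~\ref{uniform continuity} (pointwise smallness of $\phi(s,\cdot)$ away from $0$) with the fact that $\mu_t$ has little mass near $0$, and (iii) gets the main term from $-\tfrac12\int\phi(s,z)\,d\mu_s\to\tfrac16$ via the hybrid-space convergence. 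Your route instead keeps the $\lambda$'s, computes $I(s)=\int\lambda_s\,d\mu_s$ in closed form $\tfrac13\log|4s(1-s)|$ by integrating the Latt\`es functional equation $\lambda_s\circ f_s=4\lambda_s-\log|4z(z-1)(z-s)|$ and noting that $(1,0)$ is a fixed point of $F_s$ so the normalization constant vanishes, then reads the main term off $-I(s)$. The calculation is correct (I verified $\lambda_s(0),\lambda_s(1),\lambda_s(s)$ and the potential-representation step), and the exact identity for $I(s)$ is a pleasant bonus the paper doesn't record. However, you still need Favre's hybrid-space limit $\int\phi(s,z)\,d\mu_s\to\int\hat\phi_f\,d\hat\mu_f=-1/3$ to bound the cross term: your argument that $\int\log^+|z|\,d\mu_s=o(\log|s|^{-1})$ is obtained precisely by subtracting this limit from your explicit $I(s)$, whereas the paper's $\Phi$-only decomposition sidesteps $\log^+|z|$ entirely. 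So the two proofs use the same essential input from the hybrid space; yours spends a bit more effort (potential representation, Fubini, the closed-form $I(s)$) for a result of the same strength, while the paper's is shorter by avoiding the unbounded integrand $\log^+|z|$ altogether. Both are valid; just be aware that if one wanted a proof avoiding the hybrid-space machinery, the explicit $I(s)$ alone does not suffice --- the error control on $\int\log^+|z|\,d\mu_s$ still routes through Favre's theorem.
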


\begin{proof}
Recall that for any $s \in \C\setminus\{0,1\}$,  we have defined $\Phi_s(z) = G_{F_s}(z,1) - \log^+|z|$ in \eqref{Phi} and $\phi(s,z) = \Phi_s(z)/(\log|s|^{-1})$.  For any pair $s,t \in \C\setminus\{0,1\}$, the local energy $E_\infty(s,t)$ satisfies
\begin{eqnarray*}
\frac{E_\infty(s, t) }{\log|s|^{-1}} 
	&=&\frac{1}{2\log|s|^{-1}} \left( \int (\Phi_{s} - \Phi_{t}) \, d\mu_{t} + \int (\Phi_{t} - \Phi_{s}) \, d\mu_{s} \right) \\
	&=& \frac12 \left( \int \left(\phi(s,z) - \frac{\Phi_t}{\log|s|^{-1}} \right) \, d\mu_{t} + \int \frac{\Phi_t}{\log|s|^{-1}} \ d\mu_{s} - \int \phi(s,z)  \, d\mu_{s} \right) .
\end{eqnarray*}

Fix $\eps>0$ and suppose that $C \subset \C\setminus\{0,1\}$ is compact.  The $\Phi_t$ functions are uniformly bounded for all $t \in C$ and all $z \in \P^1(\C)$, so there is a $\delta$ such that 
$$\left| \int \frac{\Phi_t}{\log|s|^{-1}} \, d\mu_s \right| < \eps$$
for all $|s| < \delta$ and all $t\in C$.  We can also find a small $r = r(C)$ such that 
	$$\mu_t(\{|z| \leq r\}) < \eps$$
for all $t \in C$.  By Proposition \ref{uniform continuity}, (shrinking $\delta$ if needed) 
	$$|\phi(s,z)| < \eps$$
for all $|z| > r$ and $|s| < \delta$, and 
	$$|\phi(s,z)| < 1$$
for all $z$ and all $|s| < \delta$.  Consequently, 
\begin{eqnarray*}
\left| \int \left(\phi(s,z) - \frac{\Phi_t}{\log|s|^{-1}} \right) \, d\mu_{t} \right|  &\leq& \int_{\{|z|\leq r\}} |\phi(s,z)| \,d\mu_t + \int_{\{|z|> r\}} |\phi(s,z)| \,d\mu_t  \\
&&  + \; \int \left|\frac{\Phi_t}{\log|s|^{-1}} \right| \, d\mu_{t} \\ &<& 3 \eps.
\end{eqnarray*}

Finally, by the weak convergence of $\mu_s \to \hat{\mu}_f$ and convergence of $\phi(s,z)$ to $\hat{\phi}_f$, we can shrink $\delta$ again such that 
	$$\left|  \int\phi(s,z) \, d\mu_s - \int \hat{\phi}_f \, d\hat{\mu}_f \right|  < \eps$$
for all $|s| < \delta$.  Recalling the formula for $\hat{\phi}_f$ from \eqref{phi hat}, we have
	$$\int \hat{\phi}_f \, d\hat{\mu}_f = \int_0^1 \left( -a + \frac{a^2}{2} \right) \, da = -\frac{1}{3},$$
since the measure $\hat{\mu}_f$ is the uniform distribution on the interval $[0,1]$ in the $a$ coordinates, as described in \S\ref{Berkovich limit}.  Therefore,
$$\left( \frac{1}{6} - 4\eps \right)  \log |s|^{-1} \leq E_\infty(s,t) \leq \left( \frac{1}{6}  + 4\eps \right)  \log |s|^{-1}$$
for all $|s| < \delta$ and all $t\in C$.  
\end{proof}

\subsection{Both parameters escaping to the same cusp}

\begin{theorem}  \label{all b}
Given $\eps>0$, there exists $\delta>0$ such that
$$\left( \frac{1}{6} \left(1  - \frac{1}{b}\right)^2 - \eps \right)  \log |s|^{-1} \leq E_\infty(s,t) \leq \left( \frac{1}{6} \left(1  - \frac{1}{b}\right)^2 + \eps \right)  \log |s|^{-1}$$
for all $s,t$ satisfying $0 <  |s| \leq |t| < \delta$, where $b = (\log|s|)/(\log|t|) \geq 1$. 
\end{theorem}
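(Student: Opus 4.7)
The plan is to mirror the argument of Theorem \ref{s to 0}, now keeping track of both decay rates. Since $\Phi_r/\log|s|^{-1} = \phi(r, z) \cdot \log|r|^{-1}/\log|s|^{-1}$, which equals $1$ for $r = s$ and $1/b$ for $r = t$, we have the decomposition
\begin{equation*}
\frac{2\,E_\infty(s,t)}{\log|s|^{-1}} = \int \phi(s,z)\,d\mu_t \; - \; \frac{1}{b}\int\phi(t,z)\,d\mu_t \; + \; \frac{1}{b}\int\phi(t,z)\,d\mu_s \; - \; \int\phi(s,z)\,d\mu_s.
\end{equation*}
I aim to show that, as $s, t \to 0$ with $|s| \leq |t|$, each of these four integrals converges to an explicit limit uniformly in $b \in [1,\infty)$, and that the resulting weighted sum equals $\tfrac{1}{3}(1 - 1/b)^2$, yielding the theorem after halving.

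By Proposition \ref{uniform continuity}, I may replace $\phi(s,z)$ by $\hat{\phi}_f$ evaluated at the coordinate $a'(z) := (\log|z|)/(\log|s|)$, and $\phi(t,z)$ by $\hat{\phi}_f$ evaluated at $a(z) := b\,a'(z) = (\log|z|)/(\log|t|)$, with errors that are $o(1)$ uniformly in $z$ as $s, t \to 0$. Using the explicit formula \eqref{phi hat} together with the weak convergence of the $a$-pushforward of $\mu_t$ (resp.\ the $a'$-pushforward of $\mu_s$) to Lebesgue measure on $[0,1]$ supplied by Lemma \ref{measure decomposition}, a direct computation gives the limits $-\tfrac{1}{2b} + \tfrac{1}{6b^2}$, $-\tfrac{1}{3}$, $\tfrac{1}{6b} - \tfrac{1}{2}$, and $-\tfrac{1}{3}$ for the four integrals, respectively. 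Combining these with the coefficients $(1, -1/b, 1/b, -1)$ yields $\tfrac{1}{3}(1 - 1/b)^2$ after simplification.

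The principal obstacle is ensuring that each convergence holds uniformly in $b \in [1,\infty)$, since the third integrand $a' \mapsto \hat{\phi}_f(b a')$ has Lipschitz constant $b$, which blows up as $b \to \infty$. My approach is first to upgrade Lemma \ref{measure decomposition} to uniform convergence of the cumulative distribution function $F_s(r) := \mu_s(\{z : |z| \geq |s|^r\})$ to $r$ on $[0,1]$; this follows by combining the Lemma applied to a fine partition with monotonicity of $F_s$ and continuity of the identity function. Integration by parts then rewrites
\begin{equation*}
\int_0^1 \hat{\phi}_f(b a')\, dF_s(a') \; = \; -\tfrac{1}{2}\,F_s(1) \; - \; \int_0^{1/b} F_s(a')\, (-b + b^2 a')\, da',
\end{equation*}
and substituting $F_s(a') = a'$ throughout produces the limiting value $-\tfrac{1}{2} + \tfrac{1}{6b}$. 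The difference between the two expressions is bounded by $\sup_r |F_s(r) - r|$, using that $\int_0^{1/b} (b - b^2 a')\, da' = \tfrac{1}{2}$; this tends to zero uniformly in $b$. The same integration-by-parts identity, applied to the other three integrals with their respective coordinates and CDFs (whose integrands have Lipschitz constants at most $1$), yields the analogous uniform convergence, completing the proof.
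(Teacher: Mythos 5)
Your proof is correct, and it takes a genuinely different route from the paper's. The paper first proves the bounded-$b$ case (Proposition \ref{bounded b}) by running Proposition \ref{uniform continuity} with tolerance $\eps/(4B)$, and then handles $b\geq B$ by a separate $L^\infty$ estimate on $b\phi(s,z)-\phi(t,z)$, accepting the factor-$b$ loss that is built into the statement of Theorem \ref{all b in t}. Your four limiting values ($-\tfrac{1}{2b}+\tfrac{1}{6b^2}$, $-\tfrac{1}{3}$, $\tfrac{1}{6b}-\tfrac{1}{2}$, $-\tfrac{1}{3}$) and the algebra producing $\tfrac{1}{3}(1-1/b)^2$ all check out. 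The genuinely new ingredient is the integration-by-parts trick: the map $a'\mapsto\hat\phi_f(ba')$ has Lipschitz constant $b$, which is fatal for an $L^\infty$ estimate, but its derivative has $L^1$ norm $\int_0^{1/b}|{-b+b^2a'}|\,da' = \tfrac{1}{2}$ independent of $b$, so once you pass from Lemma \ref{measure decomposition} to uniform convergence of the CDF $F_s(r)=\mu_s(\{|z|\geq|s|^r\})$ to the clamp function (a routine sum-of-increments argument using monotonicity and \eqref{small measure disks}), the $b$-dependence disappears and you obtain the theorem's error $\eps\log|s|^{-1}$ in a single stroke, with no case split. Your argument is cleaner and more unified, though it does not recover the strictly stronger $\eps\log|t|^{-1}$ error that Proposition \ref{bounded b} provides when $b$ is bounded; since the stated theorem only asserts the $\log|s|^{-1}$-scale error, this is no loss. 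Two small points to make explicit in a polished write-up: the Stieltjes integral should formally run over $(-\infty,\infty)$, with the mass of $\mu_s$ landing outside $a'\in[0,1]$ controlled by \eqref{small measure disks} and the constancy of $\hat\phi_f$ there; and absolute continuity of $\mu_s$ guarantees the pushforward CDF has no atoms, so integration by parts poses no boundary difficulties.
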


For each real number $b\geq 1$, consider the function
\begin{eqnarray*}
\hat{\phi}_b(x) &:=&   \left\{ \begin{array}{ll}
				0 & \mbox{for }  |x|_0 \geq 1 \\
				\log|x|_0 - \dfrac{(\log |x|_0)^2}{2b\log|T|_0}  &  \mbox{for } |T|_0^b \leq  |x|_0 \leq 1  \\
				\dfrac{b}{2} \log |T|_0 & \mbox{for } |x|_0 \leq  |T|_0^b
				\end{array} \right.  \\
			&=& \left\{ \begin{array}{ll}
				0 & \mbox{for }  |x|_0  = |T|_0^a \mbox{ with } a \leq 0  \\
				-a + a^2/(2b)  &  \mbox{for } |x|_0 = |T|_0^a \mbox{ with }  0 \leq a \leq b  \\
				-b/2  & \mbox{for } |x|_0  = |T|_0^a \mbox{ with } a \geq b
				\end{array} \right.  
\end{eqnarray*}
for all $x \in \mathbb{L}$.  Note that $\hat{\phi}_1 = \hat{\phi}_f$ from \eqref{phi hat}.  As with $\hat{\phi}_1$, each $\hat{\phi}_b$ extends naturally to a function on the Berkovich projective line $\P^{1, an}_{k_0}$ and is a potential of the measure $\hat{\mu}_b - \delta_G$, where $\hat{\mu}_b$ is interval measure on $[\zeta_{0,|T|_0^b}, \zeta_{0, 1}]$ and $\delta_G$ is the delta-mass at the Gauss point $\zeta_{0,1}$.

For each $b \geq 1,$ the non-archimedean local energy $E_0(\hat{\mu}_1, \hat{\mu}_b)$ is given by 
\begin{eqnarray*}
E_0(\hat{\mu}_1, \hat{\mu}_b) 
&:=& \frac12 \left( \int (\hat{\phi}_1 - \hat{\phi}_b) \, d\hat{\mu}_b + \int (\hat{\phi}_b - \hat{\phi}_1) \, d\hat{\mu}_1 \right) \\ 
&=& \frac{\left( \log (|T|_0/|T|_0^b) \right)^2}{-6 \log \min \{|T|_0, |T|_0^b\} } \\
&=& \frac{(b-1)^2}{6b},
\end{eqnarray*}
as computed in Theorem \ref{non-arch energy} (in the case $v \nmid 2$).  

For $s$ and $t$ in $\C\setminus\{0,1\}$, if both $s$ and $t$ are close to one of the three cusps, we can estimate the archimedean local energy $E_\infty(s,t)$ in terms of the non-archimedean pairing using the degeneration description in hybrid space.  We first prove a special case of Theorem \ref{all b}:

\begin{prop}  \label{bounded b}
Given $\eps>0$ and $B>2$, there exists $\delta>0$ such that
	$$(E_0(\hat{\mu}_1, \hat{\mu}_b) - \eps) \log|t|^{-1} \leq E_\infty(s,t) \leq (E_0(\hat{\mu}_1, \hat{\mu}_b) + \eps) \log|t|^{-1}$$
for all $s, t$ satisfying $0 <  |t|^B \leq |s|  \leq |t| < \delta$, where $b = (\log|s|)/(\log|t|) \leq B$.   
\end{prop}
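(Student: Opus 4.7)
The plan is to translate the archimedean energy $E_\infty(s,t)$ into the hybrid space with base parameter $t$, pass to the coordinate $a = \log|z|/\log|t|$, and match the rescaled limit with the non-archimedean energy $E_0(\hat{\mu}_1, \hat{\mu}_b)$ computed just above the proposition.

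The first step is to cancel the $\log^+|z|$ pieces of $\lambda_s$ and $\lambda_t$ in the differences, giving
\begin{equation*}
\frac{E_\infty(s,t)}{\log|t|^{-1}} \;=\; \frac12\int \frac{\Phi_s - \Phi_t}{\log|t|^{-1}}\, d\mu_t \;+\; \frac12\int\frac{\Phi_t - \Phi_s}{\log|t|^{-1}}\, d\mu_s.
\end{equation*}
With $b = (\log|s|)/(\log|t|) \in [1,B]$ we may write $\Phi_t/\log|t|^{-1} = \phi(t,z)$ and $\Phi_s/\log|t|^{-1} = b\,\phi(s,z)$. The key algebraic observation, read off from the piecewise formulas defining $\hat{\phi}_b$, is that $b\,\hat{\phi}_1(a/b) = \hat{\phi}_b(a)$ for every $a\in\R$. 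Then Proposition \ref{uniform continuity}, applied in turn with base parameter $t$ and with base parameter $s$, will supply for every $\eps'>0$ a $\delta_1>0$ such that whenever $0 < |t|,|s| < \delta_1$ and $|\log|z|/\log|t|-a|<\delta_1$, the quantities $\phi(t,z)$ and $b\,\phi(s,z)$ lie within $\eps'$ and $B\eps'$ of $\hat{\phi}_1(\zeta_{0,|T|_0^a})$ and $\hat{\phi}_b(\zeta_{0,|T|_0^a})$, respectively.

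For the canonical measures, I will invoke Lemma \ref{measure decomposition} together with the bound \eqref{small measure disks}, once for $t$ and once for $s$. Read in the $a$-coordinate, these show that $\mu_t$ approximates the uniform probability measure on $[0,1]$ and $\mu_s$ approximates the uniform probability measure on $[0,b]$ of density $1/b$; these are precisely the interval measures $\hat{\mu}_1$ and $\hat{\mu}_b$ on $\P^{1,an}_{k_0}$ in $a$-coordinates. Partitioning $\P^1(\C)$ into the $n$ level annuli $\{|t|^{(i+1)/n} \leq |z| \leq |t|^{i/n}\}$ for $\mu_t$ and analogously in $|s|$ for $\mu_s$, and combining the uniform continuity of the potentials on each annulus with the mass bounds on each annulus, the four integrals converge and yield
\begin{equation*}
\frac{E_\infty(s,t)}{\log|t|^{-1}} \;\longrightarrow\; \frac12\int(\hat{\phi}_b - \hat{\phi}_1)\,d\hat{\mu}_1 + \frac12\int(\hat{\phi}_1 - \hat{\phi}_b)\,d\hat{\mu}_b \;=\; E_0(\hat{\mu}_1,\hat{\mu}_b),
\end{equation*}
with total error dominated by $\eps' + 1/n$ times a constant depending only on $B$.

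The hardest step will be ensuring that this convergence is uniform as $b$ varies over $[1,B]$: a priori the thresholds produced by Proposition \ref{uniform continuity} and Lemma \ref{measure decomposition} could shrink with $b$. This will be controlled by noting that each of those results is stated for a single parameter (for us, $t$ or $s$ separately), so taking $\delta = \min(\delta_1,\delta_2)$ produces a single threshold valid for all pairs $(s,t)$ satisfying $|t|^B \leq |s| \leq |t| < \delta$, while $\hat{\phi}_b$ and $\hat{\mu}_b$ depend continuously on $b \in [1,B]$. A secondary technical point is the contribution from regions where $|z|$ is very small or very large, where the asymptotic behavior of $\phi$ degrades; these will be absorbed using \eqref{small measure disks} together with the uniform bound $|\phi(\cdot,z)|\leq 1$ supplied by Proposition \ref{uniform continuity}.
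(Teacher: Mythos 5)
Your proposal follows essentially the same route as the paper: rewrite the rescaled energy $E_\infty(s,t)/\log|t|^{-1}$ in terms of $\phi(t,z)$ and $b\,\phi(s,z)$, invoke Proposition \ref{uniform continuity} for each parameter (the paper applies it to $t$ and then rewrites the resulting annulus in terms of $s$ using $|s|=|t|^b$, while you apply it to $s$ directly; the identity $b\,\hat{\phi}_1(a/b)=\hat{\phi}_b(a)$ you isolate is exactly what makes both formulations agree), and use Lemma \ref{measure decomposition} together with \eqref{small measure disks} to control the measures $\mu_t$ and $\mu_s$ uniformly on level annuli. The uniformity in $b\in[1,B]$ and the treatment of the small/large $|z|$ regions are handled the same way the paper does, so the argument is correct.
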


This proposition is an immediate consequence of the weak convergence of measures $\mu_t \to \hat{\mu}_1$ in the hybrid space, and the convergence of potentials as described in \S\ref{t near 0}.  We give the details to clarify how the bound $b\leq B$ is used.

\begin{proof}
Fix $\eps>0$ and $B>2$.

For $s$ and $t$ in the punctured unit disk $\D^*$, and for any $1 \leq b \leq B$, consider
\begin{equation} \label{phi t and s}
\phi(t,z) := \frac{\Phi_{F_t}(z,1)}{\log|t|^{-1}} \quad\mbox{ and } \quad b\phi(s,z) = \frac{b\Phi_{F_s}(z,1)}{\log|s|^{-1}} = \frac{\Phi_{F_s}(z,1)}{\log|t|^{-1}},
\end{equation}
viewed as functions on the fiber $\{t\} \times \C$ in the hybrid space. By Proposition \ref{uniform continuity}, there exists $\delta_1>0$ such that
\begin{equation} \label{g_t control}
	 \left|  \phi(t,z) - \hat{\phi}_1(\zeta_{0, |T|_0^a}) \right|  < \eps/(4B)
\end{equation}
for all $|t| < \delta_1$, $a\geq 0$, and all $|t|^{a + \delta_1}  < |z| <  |t|^{a - \delta_1}$.  In particular,
	$$|\phi(t,z)| < \eps/(4B)$$
for all $|z| \geq 1$ and all $|t| < \delta_1$.   It follows that 
	$$b\phi(s,z) \to \hat{\phi}_b(\zeta_{|T|_0^a})$$ 
in the hybrid space as $s$ and $t$ tend to 0 with $|s| = |t|^b$ and $(\log|z|)/(\log|t|) \to a$, uniformly in $b$ for $1 \leq b \leq B$.  This is because the annulus 
	$$A_t(a,\delta) := \{z \in \C:  |t|^{a + \delta}  < |z| <  |t|^{a - \delta}\}$$ 
for each fixed $t\in \D^*$, $\delta>0$ and $a\in\R$, can be written in terms of $s$ as
	$$A_t(a,\delta) = \left\{z \in \C:  \quad |s|^{a/b + \delta/b}  < |z| <  |s|^{a/b - \delta/b} \right\}$$
whenever $|s| = |t|^b$.  Therefore,
\begin{equation} \label{g_s control}
\left|  b\phi(s,z) - \hat{\phi}_b(\zeta_{0, |T|_0^a}) \right| 
	= \left|  \frac{b \, \Phi_{F_{s}}(z, 1)}{\log |s|^{-1}} - b \,\hat{\phi}_1(\zeta_{0, |T|_0^{a/b}}) \right| < b \, \eps/(4B) < \eps/4
\end{equation}
for all $z\in A_t(a,\delta_1)$, as a consequence of \eqref{g_t control}.  In particular,
	$$|b \,\phi(s,z)| < \eps/4$$
for all $|z| \geq 1$ and all $|t| < \delta_1$.  


%

Recall that the measures $\mu_t$ on the fiber over $t$ converge weakly in $X^{hyb}$ to the measure $\hat{\mu}_1$ on the central fiber.  For each $s$ with $|s| = |t|^b$, let $\mu_t^s$ denote the measure associated to $f_s$ but viewed in the fiber $\{t\}\times \P^1$.  The measures $\mu_t^s$ converge to the measure $\hat{\mu}_b$ as $t\to 0$ with $|s| = |t|^b$, and this convergence can also be made uniform in $b$ with $b\leq B$.  That is, by Lemma \ref{measure decomposition}, for any $n$ there exists $\delta_2>0$ such that 
\begin{equation} \label{mu_t control}
\frac{1}{n} - \frac{1}{n^2} < \mu_t( \{ |t|^{(i+1)/n} \leq |z| \leq |t|^{i/n} \}) < \frac{1}{n} + \frac{1}{n^2}
\end{equation}
for all $|t| < \delta_2$ and each $i = 0, \ldots, n$.  Note that this implies that 
	$$\mu_t(\{|z| \leq |t|\} \cup \{ |z| \geq 1\}) < \frac{1}{n}.$$
Therefore, we also have 
\begin{equation} \label{mu_s control}
\frac{1}{n} - \frac{1}{n^2} < \mu_t^s( \{ |t|^{b(i+1)/n} \leq |z| \leq |t|^{bi/n} \}) < \frac{1}{n} + \frac{1}{n^2}
\end{equation}
and
	$$\mu_t^s(\{|z| \leq |t|^b\} \cup \{ |z| \geq 1\}) < \frac{1}{n}.$$
for all $|t| < \delta_2$.  Thus, the measure $\hat{\mu}_b$ on small sub-annuli of the annulus $\{|t|^b \leq |z| \leq 1\}$ is controlled uniformly for all $1 \leq b \leq B$.

Putting all the pieces together,
$$\frac{E_\infty(s, t) }{\log|t|^{-1}} =\frac12 \left( \int \left( b\phi(s,z) - \phi(t,z) \right) \, d\mu_{t} + \int \left( \phi(t,z)- b\phi(s,z) \right) \, d\mu_t^s \right)$$
is within $\eps$ of 
	$$E_0(\hat\mu_1, \hat\mu_b) =  \frac12 \left( \int \left( \hat{\phi}_b - \hat{\phi}_1 \right) \, d\hat\mu_1 + \int \left( \hat{\phi}_1 - \hat{\phi}_b \right) \, d\hat\mu_b \right)   = \frac{(b-1)^2}{6b}$$
for all $t$ sufficiently small and all $s$ with $|s| = |t|^b$, for any $1 \leq b \leq B$.  
\end{proof}

Here is an equivalent restatement of Theorem \ref{all b}, expressed in terms of the growth of $|t|$:  

\begin{theorem}  \label{all b in t}
Given $\eps>0$, there exists $\delta>0$ such that
	$$\left(\frac{(b-1)^2}{6b} - b\, \eps\right) \log|t|^{-1} \leq E_\infty(s,t) \leq \left(\frac{(b-1)^2}{6b} + b\, \eps\right) \log|t|^{-1} $$
for all $s, t$ satisfying $0 <  |s| \leq |t| < \delta$, where $b = (\log |s|)/(\log|t|) \geq 1$. 
\end{theorem}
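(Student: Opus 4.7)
The plan is to mirror the proof of Proposition \ref{bounded b}, but now working in the hybrid space with $s$ as the degeneration parameter rather than $t$. The pay-off of this switch is that, under the substitution $c := 1/b \in (0,1]$, the limiting non-archimedean measures $\hat{\mu}_c$ in the Berkovich projective line over $\C((T))$ (where we now identify the indeterminate $T$ with the parameter $s$) all have supports contained in the fixed interval $[\zeta_{0,|T|_0},\zeta_{0,1}]$, and their potentials $\hat{\phi}_c$ are uniformly bounded and uniformly $1$-Lipschitz. This is exactly what restores the uniformity in $b$ that fails for the $t$-scale analysis of Proposition \ref{bounded b}, where $\hat{\mu}_b$ and $\hat{\phi}_b$ escape every compact set as $b\to\infty$.

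First I would rewrite
$$\frac{E_\infty(s,t)}{\log|s|^{-1}} \;=\; \frac12\int\bigl(\psi(s,z)-(1/b)\phi(t,z)\bigr)\, d\mu_t \;+\; \frac12\int\bigl((1/b)\phi(t,z)-\psi(s,z)\bigr)\, d\mu_s,$$
where $\psi(s,z) := \Phi_s(z)/\log|s|^{-1}$ and $\phi(t,z) := \Phi_t(z)/\log|t|^{-1}$. Applying Proposition \ref{uniform continuity} separately to the families $s\mapsto f_s$ and $t\mapsto f_t$ (both permitted since $|s|\leq|t|<\delta_1$), for any $\eps>0$ I obtain a $\delta_1>0$ such that $|\psi(s,z)-\hat{\phi}_1(\zeta_{0,|T|_0^{a'}})|<\eps/4$ with $a' := (\log|z|)/\log|s|$, and $|\phi(t,z)-\hat{\phi}_1(\zeta_{0,|T|_0^{ba'}})|<\eps/4$. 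The scaling identity $\hat{\phi}_1(\zeta_{0,|T|_0^{ba'}})/b = \hat{\phi}_{1/b}(\zeta_{0,|T|_0^{a'}})$, checked directly against the piecewise formula \eqref{phi hat} and its $b$-analogue from the proof of Proposition \ref{bounded b}, then yields $|(1/b)\phi(t,z) - \hat{\phi}_{1/b}(\zeta_{0,|T|_0^{a'}})| \leq \eps/(4b) \leq \eps/4$, uniformly in $b\geq 1$.

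Next, I would apply Lemma \ref{measure decomposition} together with \eqref{small measure disks} to both $\mu_s$ and $\mu_t$, decomposing each into sub-annuli of mass close to $1/n$ now viewed in the $s$-scale; the uniform boundedness and $1$-Lipschitz property of $\hat{\phi}_1$ and $\hat{\phi}_{1/b}$ (uniform in $b\geq 1$ because their supports all sit inside the fixed interval $[\zeta_{0,|T|_0},\zeta_{0,1}]$) let a Riemann-sum estimate conclude
$$\left|\frac{E_\infty(s,t)}{\log|s|^{-1}} - E_0(\hat{\mu}_1,\hat{\mu}_{1/b})\right| < \eps$$
for $|t| < \delta := \min(\delta_1,\delta_2)$ and all $b \geq 1$. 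A direct calculation, or Theorem \ref{non-arch energy} applied with $|t_1|_0 = |T|_0$ and $|t_2|_0 = |T|_0^{1/b}$, evaluates $E_0(\hat{\mu}_1,\hat{\mu}_{1/b}) = (1-1/b)^2/6$, and multiplying through by $\log|s|^{-1} = b\log|t|^{-1}$ produces the stated bound with error $b\eps$.

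The main obstacle I anticipate is the bookkeeping of uniformity in the second step: the two integrals $\int\psi\, d\mu_t$ and $\int (1/b)\phi\, d\mu_t$ are individually of order $1/b$ on the support of $\mu_t$ (where $\hat{\phi}_1$ and $\hat{\phi}_{1/b}$ nearly coincide), so they exhibit substantial cancellation, and the partition width in the $s$-coordinate must be chosen as $1/(nb)$ to match the width $1/n$ of the natural partition in the $t$-coordinate. Getting this cancellation cleanly, so that the principal term $(1-1/b)^2/6$ emerges rather than a larger $O(1/b)$ error, is the technical heart of the argument.
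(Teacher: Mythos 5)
Your proof is correct and takes a genuinely different route from the paper's. The paper proves Theorem \ref{all b in t} by splitting into two regimes: $1 \leq b \leq B$ is handled by citing Proposition \ref{bounded b} (the $t$-scale hybrid argument), and $b \geq B$ is handled by direct, fairly crude estimates on the two integrals, exploiting the fact that for large $b$ the term $b\phi(s,z)$ dominates $\phi(t,z)$ and that $(b-1)^2/(6b)$ is within $b\eps/50$ of $b/6$. Your proof instead works entirely in the $s$-scale, replacing $\hat{\mu}_b$ and $\hat{\phi}_b$ (whose supports and magnitudes escape every compact set as $b\to\infty$) with $\hat{\mu}_{1/b}$ and $\hat{\phi}_{1/b}$, which for all $b\geq 1$ have supports in the fixed interval $[\zeta_{0,|T|_0},\zeta_{0,1}]$ and are uniformly bounded by $1/2$ and uniformly $1$-Lipschitz in the $a'$-coordinate. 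The scaling identity $(1/b)\hat{\phi}_1(\zeta_{0,|T|_0^{ba'}}) = \hat{\phi}_{1/b}(\zeta_{0,|T|_0^{a'}})$ you check against \eqref{phi hat} is correct, and Proposition \ref{uniform continuity} applied to both the $s$- and $t$-degenerations (same family, same $\delta$) gives the potential estimates you need with error $\eps/4$ independent of $b$. The Riemann-sum step works because the annuli of Lemma \ref{measure decomposition} for $\mu_t$ translate to width $1/(nb)$ in the $a'$-coordinate, matching exactly the grid on which $\hat{\mu}_{1/b}$ puts mass $1/n$, and the $1$-Lipschitz bound makes the per-interval variation $\leq 1/(nb) \leq 1/n$; the measure-error term $n\eps'\cdot\max|\hat{\phi}_{1/b}|$ is controlled by taking $\eps'=1/n^2$ exactly as in \S\ref{measures}. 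This avoids the paper's case split and gives a single uniform-in-$b$ estimate $|E_\infty(s,t)/\log|s|^{-1} - (1-1/b)^2/6| < \eps$, which rescales to the stated bound. The one caveat worth recording is that the "substantial cancellation" you flag as a worry in the last paragraph is not actually a problem: the errors on $\int\psi\,d\mu_t$ and $\int(1/b)\phi\,d\mu_t$ are each bounded by $O(\eps + 1/n)$ uniformly in $b$, so their difference inherits the same bound, and there is no need for the cancellation to improve the error estimate. Your argument is in effect a strengthened version of Proposition \ref{bounded b} that holds for all $b \geq 1$, which is aesthetically cleaner and makes the equivalence of Theorems \ref{all b} and \ref{all b in t} transparent rather than a restatement to be verified.
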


Comparing Theorem \ref{all b in t} to the statement of Proposition \ref{bounded b}, we see that we lose the ability to bound the energy within a uniform $\eps$ when $b$ becomes large.  

\begin{proof}[Proof of Theorems \ref{all b in t} and \ref{all b}]
Fix $\eps>0$.  

As in the proof of Proposition \ref{bounded b}, we make use of the weak convergence of measures $\mu_t \to \hat{\mu}_1$ and convergence of the potentials $\phi(t,z) \to \hat{\phi}_1$ in the hybrid space as $t \to 0$.  Recalling the formula for $\hat{\phi}_f$ from \eqref{phi hat}, we have
	$$\int \hat{\phi}_f \, d\hat{\mu}_f = \int_0^1 \left( -a + \frac{a^2}{2} \right) \, da = -\frac{1}{3},$$
since the measure $\hat{\mu}_f$ is the uniform distribution on the interval $[0,1]$ in the $a$ coordinates, as described in \S\ref{Berkovich limit}.   

Choose $r$ satisfying $0 < r < \eps/100$.  There is a $\delta_2$ such that 
	$$\mu_t(\{|z| \leq |t|\}) < \eps/50$$ 
and 
	$$|\phi(t,z)| < \eps/50 \mbox{ for } |z| \geq |t|^r$$ 
and 
	$$|\phi(t,z)| < \frac12 + \eps/50 \mbox{ for all } z$$
for all $|t| < \delta_2$.  Thus, for $s \in \C^*$ with $|s| = |t|^b$ and $b > 1/r$,  we have
	$$|b \phi(s,z)| < b \,\eps/50 \mbox{ for } |z| \geq |t|$$
and 
	$$|b \phi(s,z)| < \frac{b}{2} + b\,\eps/50 \mbox{ for all } z$$
for all $|t| < \delta_2$. By shrinking $\delta_2$ further if necessary, we appeal to the weak convergence of measures $\mu_t \to \hat{\mu}_1$ in the hybrid space to deduce that 
\begin{equation} \label{integral limit}
\left| \int \phi(t,z) \, d\mu_t + \frac{1}{3} \right| = \left|  \int \phi(t,z) \, d\mu_t - \int \hat{\phi}_1 \, d\hat{\mu}_1 \right|  \; < \; \eps/10
\end{equation}
for all $|t| < \delta_2$.  

Now fix $B>1/r$, and recall that $r < \eps/100$, so that 
\begin{equation} \label{choice of B}
	\left| \frac{(b-1)^2}{6 \, b} - \frac{b}{6} \right| <  b \, \eps/50
\end{equation}
for all $b \geq B$.  For this $B$, we can find a $\delta_B >0$ such that Proposition \ref{bounded b} is satisfied for all $0 < |s| = |t|^b \leq |t| < \delta_B$ with $1 \leq b \leq B$.  Choose any $\delta \leq \min\{\delta_B, \delta_2\}$, and we obtain the theorem for $b \leq B$.

Now suppose $b\geq B$.  We will estimate 
$$\frac{E_\infty(s, t) }{\log|t|^{-1}} = \frac12 \left( \int \left( b\phi(s,z) - \phi(t,z) \right) \, d\mu_{t} + \int \left( \phi(t,z) - b \phi(s,z) \right) \, d\mu_t^s \right)$$ 
for all $|t| < \delta$ and any $s$ with $|s| = |t|^b$ by estimating the two integrals separately.

As shown above,
	$$|b\phi(s,z) - \phi(t,z)| \leq 1/2 + \eps/50 + b\,\eps/50 < b \, \eps/10$$
for all $|z| \geq |t|$ and $0 < |s| = |t|^b \leq |t| < \delta$ with $b\geq B$, and 
	$$|b\phi(s,z) - \phi(t,z)| \leq 1/2 + \eps/50 + b/2 +  b \,\eps/50$$
for all $z$ and $0 < |s| = |t|^b \leq |t| < \delta$.  
Writing the first integral as 
 $$\int \left(b\phi(s,z) - \phi(t,z) \right) \, d\mu_{t} = \int_{|z|\geq |t|} (b\phi(s,z) - \phi(t,z)) \, d\mu_t + \int_{|z|\leq |t|} (b\phi(s,z) - \phi(t,z)) \, d\mu_t,$$
it follows that 
$$\left| \int (b\phi(s,z) - \phi(t,z)) \, d\mu_t \right| \leq b\, \eps/10 + (1/2 + \eps/50 + b/2 +  b \,\eps/50) (\eps/50)  < b \,\eps/ 5$$ 
for all $b\geq B$ and $0 < |s| = |t|^b \leq |t| < \delta$.

Write the second integral as 
$$\int \left( \phi(t,z) - b\phi(s,z) \right) \, d\mu_t^s = \int \phi(t,z)  \, d\mu_t^s - \int b\phi(s,z) \, d\mu_t^s.$$
As $|\phi(t,z)|$ is bounded by $1/2 + \eps/50$, we have
	$$\left| \int \phi(t,z) \, d\mu_t^s \right| \leq \frac12 + \eps/50 <  b \, \eps/25$$
for all $b\geq B$.   On the other hand, we have 
	$$\int b \phi(s,z) \, d\mu_t^s = b  \int\phi(s,z) \, d\mu_s$$
so that 
$$ \left| \int b \phi(s,z) \, d\mu_t^s + \frac{b}{3} \right| < b \, \eps/10$$
for all $0 < |s| = |t|^b \leq |t| < \delta$ from \eqref{integral limit}.  

We conclude that 
	$$\left|\frac{E_\infty(s, t) }{\log|t|^{-1}} - \frac{b}{6}\right| <  b\,\eps/2$$
for all $b$ sufficiently large and all $|t| < \delta$.  On the other hand, we also have 
	$$\left| \frac{b}{6} - \frac{(b-1)^2}{6b}\right| < b\, \eps/50$$
for all $b\geq B$ by our choice of $B$, so the theorem is proved.
\end{proof}

\subsection{Parameters escaping to different cusps}

\begin{theorem} \label{different cusps}
Given $\eps>0$, there exists $\delta>0$ such that 
$$\left( \frac{1}{6} \left(1  + \frac{1}{b}\right) - \eps \right)  \log |s|^{-1} \leq E_\infty(s,t) \leq \left( \frac{1}{6} \left(1  + \frac{1}{b}\right) + \eps \right)  \log |s|^{-1}$$
for all $s,t \in \C$ satisfying $|t| > 1/\delta$ and $0 < |s| \leq 1/|t|$, where $b = -(\log|s|)/(\log|t|)$. 
\end{theorem}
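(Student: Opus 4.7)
The plan is to mimic the proof of Theorem \ref{all b in t}, after using Proposition \ref{G transformation} to translate the cusp $t \to \infty$ into a cusp of the type $u := 1/t \to 0$ already analyzed in Section \ref{archsection}. Explicitly, Proposition \ref{G transformation} gives $\lambda_t(z) = \lambda_u(uz) - \log|u|$ and $\int f\, d\mu_t = \int f(w/u)\, d\mu_u(w)$. Writing $a := \log|z|/\log|t|$, the limit \eqref{G limit infinity} appearing inside the proof of Proposition \ref{arch regularization} gives hybrid-uniform convergence
\[
\frac{\lambda_t(z)}{\log|t|} \;\longrightarrow\; \hat{g}_\infty(a) \;:=\; \hat{g}_f\!\left(\zeta_{0,|T|_0^{1-a}}\right) + 1,
\]
while Proposition \ref{uniform continuity for G} applied to $s$, using $\log|z|/\log|s| = -a/b$, yields $\lambda_s(z)/\log|s|^{-1} \to \hat{g}_f(\zeta_{0,|S|_0^{-a/b}})$. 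Combined with Lemma \ref{measure decomposition} and the pushforward identity for $\mu_t$, this also shows that $\mu_t$ is asymptotically uniform in $a \in [0,1]$ and $\mu_s$ asymptotically uniform in $a \in [-b,0]$.

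Substituting these limit profiles into the defining integral for $E_\infty(s,t)$ and integrating the piecewise-polynomial expressions produces three main contributions summing to $\log|t|\cdot(1+b)/3$. The term $-\tfrac{1}{6}\log|t|$ comes from $\int(\lambda_s - \lambda_t)\,d\mu_t$: on the $\mu_t$-support $a \in [0,1]$ we have $b\,\hat{g}_f(-a/b) - \hat{g}_\infty(a) = a - \tfrac{1}{2} - \tfrac{a^2}{2}$, which integrates to $-\tfrac{1}{6}$ against the uniform measure. The term $\tfrac{1}{2}\log|t|$ comes from $\int \lambda_t\, d\mu_s$, since $\hat{g}_\infty \equiv \tfrac{1}{2}$ for all $a \leq 0$. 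The term $\tfrac{b}{3}\log|t|$ equals $-\int \lambda_s\, d\mu_s$, a consequence of $\int \hat{g}_f\, d\hat{\mu}_f = -\tfrac{1}{3}$ and the rescaling $\log|s|^{-1} = b\log|t|$. Rewriting $\log|t| = \log|s|^{-1}/b$ converts $2 E_\infty(s,t) \approx \log|t|(1+b)/3$ into the claimed $\tfrac{1}{6}(1 + 1/b)\log|s|^{-1}$.

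To promote this heuristic to the uniform bound, I would proceed exactly as in the proof of Theorem \ref{all b in t}, splitting into the bounded regime $b \in [1, B]$ and the large regime $b \geq B$. The bounded regime follows immediately from the hybrid continuity statements of Section \ref{archsection}, which are uniform in $(s,t,b)$ across any compact range of $b$. The main obstacle is the large-$b$ regime, where the support of $\mu_s$ in $a$-coordinates extends out to $-b$, leaving any compact core of the hybrid space; as in the proof of Theorem \ref{all b in t}, this is resolved by the fact that $\hat{g}_\infty$ is literally constant (equal to $\tfrac{1}{2}$) on the entire half-line $a \leq 0$, so $\int \lambda_t\,d\mu_s$ asymptotically reduces to $\tfrac{1}{2}\log|t|$ regardless of how large $b$ is, while the remaining $b/3$ term is a purely $s$-intrinsic self-interaction of $\lambda_s$ in which $t$ does not appear.
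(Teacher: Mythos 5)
Your proposal is correct and follows essentially the same route as the paper. The paper's proof of Theorem \ref{different cusps} also works in the hybrid space over the disk parameterized by $u = 1/t$, uses the limit profile $\hat{g}_\infty$ from Proposition \ref{big t escape-rate formula} (which agrees with your $\hat{g}_f(\zeta_{0,|T|_0^{1-a}}) + 1$), and cites the non-archimedean energy $E(\hat{\mu}_\infty,\hat{\mu}_b) = (b+1)/6$ from Theorem \ref{non-arch energy}, then concludes as in the proof of Theorem \ref{all b}/\ref{all b in t} with the same bounded-$b$/large-$b$ dichotomy and the same observation that $\hat{g}_\infty$ is constant on $a\leq 0$ to tame the unbounded-$b$ regime; your explicit integral check that the three contributions sum to $(1+b)/3\cdot\log|t|$ is a direct verification of the same value.
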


\begin{proof}
The proof is nearly identical to that of Theorem \ref{all b}, working in the hybrid space over a unit disk that we will parameterize by $u \in \D$.  For fixed $b\geq 1$, $t = 1/u$ and any $s$ satisfying $|s| = |u|^b$, consider the functions
	$$g_{u}(z)  = \frac{G_{F_{1/u}}(z,1)}{\log|u|^{-1}} = \frac{G_{F_{t}}(z,1)}{\log|t|}$$
and 
	$$b g_s(z) =  b\frac{G_{F_{s}}(z,1)}{\log|s|^{-1}} $$
in the fiber $\{u\} \times \P^1$.  

As computed in Proposition \ref{big t escape-rate formula}, the limit of $g_u(z)$ as $u\to 0$ with $|z| = |u|^a$ is 
	$$\hat{g}_\infty(a) := \left\{  \begin{array}{ll}
				-a  & \mbox{for } a \leq -1 \\
				(a^2 + 1)/2 & \mbox{for } -1 \leq a \leq 0 \\
				1/2 & \mbox{for } a\geq 0  \end{array} \right.$$
As $u \to 0$, the measures $\mu_{1/u}$ on $\{u\}\times \P^1$ will to converge the canonical measure $\hat{\mu}_\infty$ for the map $f = f_{1/U}$ on the Berkovich projective line, working over the field $\C((U))$; the measure $\hat{\mu}_\infty$ is uniformly distributed on the interval $[\zeta_{0,1}, \zeta_{0,|U|_0^{-1}}]$.

As $s  \to 0$ with $|s| = 1/|t|^b = |u|^b$, $b\geq 1$, we have 
	$$b g_s(z) \rightarrow \hat{g}_b(\zeta_{0, |U|_0^a})$$
for $|z| = |u|^a$, exactly as in \eqref{g_s control}.  The non-archimedean local energy is computed in Theorem \ref{non-arch energy} as 
	$$E(\hat{\mu}_\infty, \hat{\mu}_b) = \frac{b+1}{6}.$$
We conclude as in the proof of Theorem \ref{all b} that, for all given $\eps>0$, there exists $\delta>0$ such that
$$\left( \frac{b+1}{6}  - b\,\eps \right)  \log |t| \leq E_\infty(s,t) \leq \left( \frac{b+1}{6}  + b\,\eps \right)  \log |t|$$
for all $|t| > 1/\delta$ and $|s| = 1/|t|^b$.  This completes the proof of Theorem \ref{different cusps}.
\end{proof}

\bigskip
\section{Proof of Theorems \ref{uniformlowerbound} and \ref{pairingasymptotics}} \label{pairingsection}

In this section, we first prove Theorem \ref{pairingasymptotics}, which states there exist constants $\alpha, \beta>0$ such that 
	$$\hat{h}_{t_1} \cdot \hat{h}_{t_2} \; \geq \; \alpha \, h(t_1, t_2) - \beta$$
for all $t_1 \not= t_2$ in $\overline{\mathbb{Q}} \setminus \{0,1\}$. We then use this lower bound to prove Theorem \ref{uniformlowerbound} and Proposition \ref{uniformZhang}.

\subsection{Balancing local contributions}
Fix any $r$ such that 
  $$0<r\leq 1/16.$$
Fix $t_1, t_2 \in \Qbar\setminus\{0,1\}$, and let $K$ be any number field containing $t_1$ and $t_2$.  
We split the places $M_K$ into ``good" and ``bad" subsets, depending on the pair $t_1, t_2$ and the choice of $r$.  Let $M_\good(t_1, t_2)$ be the set of places $v\in M_K$ with   
 $$|\log|t_2/t_1|_v|\geq r\cdot\max\{ |\log|t_2|_v|, |\log|t_1|_v|\};$$
and set $M_\bad(t_1, t_2) =M_K\backslash M_\good(t_1, t_2)$.  We further decompose $M_\good(t_1,t_2)$ into its archimedean ($M_\good^\infty$) and non-archimedean ($M_\good^0$) places.
    
\begin{lemma}\label{arch-estimate lemma}
There exists a constant $C_0 >0$ such that 
   $$6\,E_v(t_1,t_2)\geq \frac{3r}{4}|\log |t_1/t_2|_v| - C_0$$
for any choice of $t_1$ and $t_2$ in $\Qbar\setminus\{0,1\}$ and for all $v\in M_\good^{\infty}(t_1, t_2)$.
\end{lemma}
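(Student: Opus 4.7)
My plan is to combine the three archimedean energy estimates of Section~\ref{arch energies}---Theorems~\ref{s to 0}, \ref{all b}, and \ref{different cusps}---with the $S_3$-symmetry of Proposition~\ref{one sing} (generated by $t\mapsto 1-t$ and $t\mapsto 1/t$), applied case-by-case according to which cusps of $\{0,1,\infty\}$ the two parameters approach at the archimedean place $v$.

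First I will fix $\eps_0\leq r^2/24$ (this implies $1-6\eps_0\geq 3r/4$ since $r\leq 1/16$), and then choose a single threshold $\delta_0>0$ small enough that each of the three theorems applies with this $\eps_0$ and with compact set $C_{\delta_0}:=\{t:\min(|t|_v,|1-t|_v,1/|t|_v)\geq\delta_0\}$ used in Theorem~\ref{s to 0}. The key preliminary observation is that $|\log|t_1/t_2|_v|$ can be large only if at least one of $t_1,t_2$ is near the cusp $0$ or $\infty$: proximity to cusp $1$ contributes only a bounded amount to $|\log|t_i|_v|$. Thus if neither $t_i$ is near $0$ or $\infty$, then $|\log|t_1/t_2|_v|\leq 2\log(1/\delta_0)$ and the inequality holds with a suitable $C_0$ since $E_v\geq 0$. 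Otherwise, after applying $t\mapsto 1/t$ to both arguments if needed and then swapping $t_1\leftrightarrow t_2$, I may assume $|t_1|_v<\delta_0$.

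With $t_1$ near cusp $0$ I split into four sub-cases for $t_2$. In case (a), $t_2\in C_{\delta_0}$: Theorem~\ref{s to 0} gives $6E_v\geq(1-6\eps_0)\log|t_1|_v^{-1}$, and since $|\log|t_2|_v|\leq\log(1/\delta_0)$ the inequality $\log|t_1|_v^{-1}\geq|\log|t_1/t_2|_v|-\log(1/\delta_0)$ absorbs the remainder into $C_0$. In case (b), $|t_2|_v<\delta_0$: after swapping so $|t_1|_v\leq|t_2|_v$ and setting $b:=\log|t_1|_v/\log|t_2|_v\geq 1$, the good condition reads $1-1/b\geq r$; Theorem~\ref{all b} yields $6E_v\geq((1-1/b)^2-6\eps_0)\log|t_1|_v^{-1}$, which combined with the identity $(1-1/b)\log|t_1|_v^{-1}=|\log|t_1/t_2|_v|$ and the bound $\log|t_1|_v^{-1}\leq|\log|t_1/t_2|_v|/r$ produces $6E_v\geq(r-6\eps_0/r)|\log|t_1/t_2|_v|\geq\tfrac{3r}{4}|\log|t_1/t_2|_v|$. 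In case (c), $|t_2|_v>1/\delta_0$: after applying $t\mapsto 1/t$ to both arguments if $|t_1t_2|_v>1$, Theorem~\ref{different cusps} applies, and the identity $|\log|s/t|_v|=(1+1/b)|\log|s|_v|$ makes its bound rewrite directly as $6E_v\geq(1-6\eps_0)|\log|t_1/t_2|_v|$.

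The main obstacle is case (d), $|1-t_2|_v<\delta_0$, where $t_1,t_2$ lie near different cusps ($0$ and $1$), so that none of the three theorems applies to $(t_1,t_2)$ directly. I handle this by applying the $S_3$-element $\sigma(t):=1-1/t$ (the composition of the two generators from Proposition~\ref{one sing}, cycling $0\mapsto\infty\mapsto 1\mapsto 0$): since $\sigma(t_1)$ is near $\infty$ and $\sigma(t_2)$ is near $0$, the pair $(\sigma(t_1),\sigma(t_2))$ falls into case (c), giving $6E_v(t_1,t_2)=6E_v(\sigma(t_1),\sigma(t_2))\geq(1-6\eps_0)|\log|\sigma(t_1)/\sigma(t_2)|_v|$. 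A direct computation yields
\[
\log|\sigma(t_1)/\sigma(t_2)|_v \;=\; -\log|t_1/t_2|_v+\log|1-t_1|_v-\log|1-t_2|_v,
\]
in which the middle term is bounded since $|1-t_1|_v\in[1-\delta_0,1+\delta_0]$, and $-\log|1-t_2|_v\geq 0$ and $-\log|t_1/t_2|_v>0$ both contribute with the same sign, so $|\log|\sigma(t_1)/\sigma(t_2)|_v|\geq|\log|t_1/t_2|_v|-O(1)$. Taking $C_0$ to be the maximum of the bounded remainders arising across the four sub-cases completes the proof.
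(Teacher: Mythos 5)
Your proof is correct and takes essentially the same approach as the paper's: fix $\epsilon=r^2/24$, choose a threshold small enough for the three degeneration estimates of Section~\ref{arch energies} to apply, reduce to the normalized situation $|t_1|_v<\delta_0$ via the $S_3$-symmetry of Proposition~\ref{one sing}, and then case-split on the cusp behavior of $t_2$. You spell out explicitly the ``$t_1$ near $0$, $t_2$ near $1$'' sub-case via $\sigma(t)=1-1/t$ and the estimate $|\log|\sigma(t_1)/\sigma(t_2)|_v|\geq|\log|t_1/t_2|_v|-O(1)$, which the paper compresses into ``combining the above inequalities with the symmetry relations''; the one small wrinkle in your write-up is that your definition of $\delta_0$ references $C_{\delta_0}$ and so is circular---the paper avoids this by first fixing $\delta_1$ from Theorems~\ref{all b} and \ref{different cusps} and only then invoking Theorem~\ref{s to 0} for the compact set cut out by $\delta_1$.
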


\begin{proof}
Let 
  $$\epsilon=\frac{r^2}{24} $$
and let $\delta_1$ be the minimum of the $\delta$'s from Theorems \ref{all b} and \ref{different cusps} for this choice of $\epsilon$. Let $\delta_2$ be the $\delta$ of Theorem \ref{s to 0} for the compact set 
   $$\{t\in \C:  \delta_1\leq |t|\leq 1/\delta_1\textup{ and } |t-1|\geq \delta_1\}$$
in $\C\setminus\{0,1\}$.  Let $\delta_0$ be the minimum of $\delta_1$ and $\delta_2$, and let $C_0$ be any real number larger than $\log(1/\delta_0)$.  

Now fix $t_1, t_2$ and any number field $K$ containing $t_1$ and $t_2$, and fix a place $v \in M_\good^{\infty}(t_1, t_2)\subset M_K$.  If $\delta_0\leq |t_i|_v\leq 1/\delta_0$ for $i=1,2$, we have 
   $$6\, E_v(t_1, t_2)\; \geq \; 0 \;\geq \; \frac{3r}{4} |\log |t_1/t_2|_v|- C_0.$$
As $v \in M_\good^\infty$, if $|t_2|_v\leq |t_1|_v<1$, then 
	$$|t_2|_v=|t_1|_v^b<|t_1|_v \mbox{ for } \frac{b-1}{b} \geq r$$
and therefore, by Theorem \ref{all b}, if additionally $|t_1|_v < \delta_1$, then we have
\begin{eqnarray*}
6\, E_v(t_1, t_2)&\geq& \left(\frac{(b-1)^2}{b^2} -6\epsilon\right)|\log |t_1|_v^b|\\
&\geq& \left(\frac{(b-1)r}{b} -\frac{r^2}{4}\right)|\log |t_1|_v^b|\\
&\geq& \frac{3r}{4}\frac{b-1}{b}|\log |t_1|_v^b|=\frac{3r}{4}|\log|t_1/t_2|_v|.
\end{eqnarray*}
If $|t_1|_v < \delta_1$ and $|t_2|_v=1/|t_1|_v^b$ for some $b\geq 1$, then by Theorem \ref{different cusps}
   $$6\, E_v(t_1, t_2)\geq \left(\frac{(b+1)}{b} -6\epsilon\right)|\log |t_1|_v^b|\geq \frac{3(b+1)r}{4}|\log |t_1|_v|=\frac{3r}{4}|\log|t_1/t_2|_v|;$$
if $\delta_1\leq |t_1|_v\leq 1/\delta_1$, $|t_1-1|_v\geq \delta_1$ and $|t_2|_v< \delta_0$, we have by Theorem \ref{s to 0} that
   $$6\, E_v(t_1, t_2)\geq (1-6\epsilon)|\log |t_2|_v|\geq \frac{3r}{4}|\log|t_1/t_2|_v|- C_0.$$
Combining the above inequalities with the symmetry relations of Proposition \ref{one sing}, we obtain 
  $$6\, E_v(t_1, t_2) \geq \frac{3r}{4}|\log|t_1/t_2|_v|- C_0.$$
\end{proof}

\begin{lemma}\label{half bound}
There is a constant $C>0$ such that 
   $$\sum_{v\in M_\good(t_1, t_2)} 3r_vE_v(t_1, t_2)\geq \frac{3r}{4}h(t_2/t_1)-\frac{3r^2}{2}h(t_1,t_2)- C$$
for any $t_1\neq t_2\in \Qbar\backslash\{0, 1\}$.
\end{lemma}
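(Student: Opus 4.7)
The plan is to combine the local lower bounds --- Theorem \ref{non-arch energy} at non-archimedean places and Lemma \ref{arch-estimate lemma} at archimedean places --- to control $\sum_{v \in M_\good} 3 r_v E_v(t_1, t_2)$ in terms of $\sum_{v \in M_\good} r_v |\log|t_2/t_1|_v|$, and then to reconstruct the full height $h(t_2/t_1)$ via the product formula, absorbing the missing bad-place contributions into the error term $\frac{3r^2}{2}h(t_1, t_2)$.

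For each $v \in M_\good^0$, the defining inequality $|\log|t_1/t_2|_v| \geq r \max\{|\log|t_1|_v|, |\log|t_2|_v|\}$ feeds directly into each case of Theorem \ref{non-arch energy}: when $\min\{|t_1|_v, |t_2|_v\} > 1$ or $\max\{|t_1|_v, |t_2|_v\} < 1$, dividing through yields $\log^2|t_1/t_2|_v / \log\max\{|t_1|_v,|t_2|_v\} \geq r|\log|t_1/t_2|_v|$ (and similarly with $-\log\min$ in the second case), while in the remaining case the bound $|\log|t_1/t_2|_v|/6$ dominates $r|\log|t_1/t_2|_v|/6$ since $r \leq 1$. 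Thus $3E_v(t_1,t_2) \geq \frac{3r}{8}|\log|t_1/t_2|_v| + 4\log|2|_v$ uniformly on $M_\good^0$. Lemma \ref{arch-estimate lemma} gives the same leading bound $3E_v(t_1,t_2) \geq \frac{3r}{8}|\log|t_1/t_2|_v| - C_0/2$ at each $v \in M_\good^\infty$. Summing with weights $r_v$, the archimedean error is controlled by $C_0/2$ (since $\sum_{v\mid\infty} r_v \leq 1$), and the $\log|2|_v$ terms are bounded below by $-4\log 2$ via the product formula $\sum_{v\in M_K^0} r_v \log|2|_v = -\log 2$, yielding
\begin{equation*}
\sum_{v \in M_\good} 3 r_v E_v(t_1, t_2) \;\geq\; \frac{3r}{8}\sum_{v\in M_\good} r_v |\log|t_2/t_1|_v| \;-\; C'
\end{equation*}
for an absolute constant $C'$.

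To finish, I would invoke the product-formula identity $\sum_{v\in M_K} r_v |\log|x|_v| = 2h(x)$ with $x = t_2/t_1 \in K^{\times}$, rewriting $\frac{3r}{4}h(t_2/t_1) = \frac{3r}{8}\sum_{v\in M_\good} r_v |\log|t_2/t_1|_v| + \frac{3r}{8}\sum_{v\in M_\bad} r_v |\log|t_2/t_1|_v|$; this reduces the problem to bounding the bad-place tail by $\frac{3r^2}{2}h(t_1,t_2)$. For $v \in M_\bad$, the defining inequality gives $|\log|t_2/t_1|_v| \leq r(|\log|t_1|_v| + |\log|t_2|_v|)$, and extending the resulting sum to all of $M_K$ and reapplying the identity yields $\sum_{v\in M_\bad} r_v |\log|t_2/t_1|_v| \leq 2r(h(t_1) + h(t_2)) \leq 4r\, h(t_1,t_2)$, using $h(t_1) + h(t_2) \leq 2h(t_1,t_2)$. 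Multiplying by $\frac{3r}{8}$ produces exactly the claimed $\frac{3r^2}{2}h(t_1,t_2)$.

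The proof is essentially bookkeeping once the earlier local estimates are in hand. The main point requiring care is that the final constant $C$ must be absolute, independent of the number field $K$ chosen to contain $t_1$ and $t_2$: this holds because $C_0$ from Lemma \ref{arch-estimate lemma} is intrinsic, and the non-archimedean $\log|2|_v$ contribution is governed by the product formula applied to the rational number $2$.
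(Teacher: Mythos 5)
Your proof is correct and follows essentially the same route as the paper: split $M_{\good}$ into archimedean and non-archimedean places, feed the defining inequality for good places into Theorem \ref{non-arch energy} to get a linear lower bound $\gtrsim r\,|\log|t_1/t_2|_v|$, invoke Lemma \ref{arch-estimate lemma} at archimedean places, sum and control the $\log|2|_v$ error with the product formula, then convert $\sum_{v\in M_{\good}} r_v|\log|t_2/t_1|_v|$ to $2h(t_2/t_1)$ minus a bad-place tail bounded via the definition of $M_\bad$ by $4r\,h(t_1,t_2)$. The only differences from the paper's proof are cosmetic (you work with $3E_v$ and coefficient $\tfrac{3r}{8}$ uniformly, while the paper works with $6E_v$ using $r$ at non-archimedean places and then weakening to $\tfrac{3r}{4}$).
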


\begin{proof}
Fix $t_1$ and $t_2$ and any number field $K$ containing them.  For the non-archimedean places $v\in M_\good^0(t_1, t_2)$, by Theorem \ref{non-arch energy}, we have 
   $$6E_v(t_1, t_2)\geq r\cdot |\log|t_2/t_1|_v|-8\log^+|1/2|_v,$$
and thus
\begin{equation}\label{non-archi lower bound 11}
\sum_{v\in M_\good^0(t_1, t_2)}6r_vE_v(t_1, t_2) \geq \sum_{v\in M_\good^0(t_1, t_2)}r_v\,\left(r\cdot |\log|t_2/t_1|_v|-8\log^+|1/2|_v\right).
\end{equation}
Now choose any integer $N_0$ so that $\log N_0$ is larger than the $C_0$ of Lemma \ref{arch-estimate lemma}, for each archimedean $v\in M_\good^\infty(t_1, t_2)$.  We have
   $$6E_v(t_1, t_2)\geq \frac{3r}{4}|\log |t_2/t_1|_v|-\log N_0$$
for all $v\in M_\good^\infty(t_1, t_2)$.  With $h$ the naive logarithmic height on $\Qbar$, we set
	$$C = 4 \, h(2) + \frac12 \, h(N_0) = \frac12 \log (2^8 N_0).$$
Then, we have
\[\begin{split}
\sum_{v\in M_\good(t_1, t_2)}&6r_vE_v(t_1, t_2)=\sum_{v\in M_\good^\infty(t_1, t_2)}6r_vE_v(t_1, t_2)+\sum_{v\in M_\good^0(t_1, t_2)}6r_vE_v(t_1, t_2)\\
&\geq \sum_{v\in M_\good(t_1, t_2)}r_v\, \frac{3r}{4}|\log|t_2/t_1|_v| -\sum_{v\in M_K}r_v\,\left(8\log^+|2|_v+\log^+|N_0|_v\right)\\
&= \sum_{v\in M_\good(t_1, t_2)}r_v\,\frac{3r}{4}|\log|t_2/t_1|_v| -2C\\
&=   \sum_{v\in M_K}r_v\, \frac{3r}{4}|\log|t_2/t_1|_v|-\sum_{v\in M_\bad(t_1, t_2)}r_v\, \frac{3r}{4}|\log|t_2/t_1|_v| -2C\\
&\geq  \sum_{v\in M_K}\frac{3r_vr}{4}\, |\log|t_2/t_1|_v|-\sum_{v\in M_\bad(t_1, t_2)}\frac{3r^2r_v}{4} \max\{ |\log|t_2|_v|, |\log|t_1|_v|\} -2C\\
&\geq  \frac{3r}{4}\, 2 \, h(t_2/t_1)-\frac{3r^2}{4}\cdot 4\, h(t_2, t_1) -2C
\end{split}
\]
For the last inequality, we use the facts that $2\, h(x)=\sum_{v\in M_K}r_v|\log|x|_v|$ for nonzero $x \in K$ and 
  $$\sum_{v\in M_K}r_v\,\max\{ |\log|t_2|_v|, |\log|t_1|_v|\}\leq2\left(h(t_2)+h(t_1)\right)\leq 4\, h(t_2,t_1).$$
\end{proof}

\subsection{Proof of Theorem \ref{pairingasymptotics}} 
We begin with a standard lemma.

\begin{lemma}\label{height bound}
There is a constant $C>0$, such that 
  $$h\left(\frac{t_2}{t_1}\, , \, \frac{1-t_2}{1-t_1}\right)\geq \frac{1}{2}\, h(t_1, t_2)-C$$
for $t_1\neq t_2\in \Qbar\setminus\{0, 1\}$. Here the $h$ is the naive logarithmic height on $\A^2(\Qbar)$. 
\end{lemma}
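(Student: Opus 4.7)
The plan is to invert the map $(t_1,t_2) \mapsto (a,b) := (t_2/t_1,\,(1-t_2)/(1-t_1))$ algebraically, rewrite the naive height $h(t_1,t_2)$ in terms of projective coordinates involving $a$ and $b$, and use the product formula to cancel a common denominator. Solving the linear system $at_1 = t_2$ and $b(1-t_1) = 1-t_2$ yields
\begin{equation*}
t_1 \;=\; \frac{1-b}{a-b}, \qquad t_2 \;=\; \frac{a(1-b)}{a-b},
\end{equation*}
where $a-b \neq 0$ because $t_1 \neq t_2$, and $a$, $1-b$ are nonzero because $t_1,t_2 \notin \{0,1\}$.

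Next I would fix a number field $K$ containing $t_1,t_2$, and for each place $v \in M_K$ multiply the argument of the local naive height by $|a-b|_v$ to obtain
\begin{equation*}
\log\max\{1,|t_1|_v,|t_2|_v\} \;=\; \log\max\{|a-b|_v,\,|1-b|_v,\,|a(1-b)|_v\} - \log|a-b|_v.
\end{equation*}
Summing over $v$ with weights $r_v$, the product formula $\sum_v r_v \log|a-b|_v = 0$ (valid since $a \neq b$) eliminates the second term, and gives
\begin{equation*}
h(t_1,t_2) \;=\; \sum_{v \in M_K} r_v \log\max\{|a-b|_v,\,|1-b|_v,\,|a(1-b)|_v\}.
\end{equation*}

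To finish, I would bound each local maximum in terms of $M_v := \max\{1,|a|_v,|b|_v\}$. The (ultra)triangle inequality gives $|a-b|_v,\,|1-b|_v \leq c_v M_v$ and $|a(1-b)|_v \leq c_v M_v^2$, where $c_v = 2$ for archimedean $v$ and $c_v = 1$ otherwise; since $M_v \geq 1$, the common upper bound $c_v M_v^2$ applies to all three terms. Taking logarithms and summing gives $h(t_1,t_2) \leq 2h(a,b) + \log 2$, whence $h(a,b) \geq \tfrac{1}{2}h(t_1,t_2) - \tfrac{1}{2}\log 2$, so $C = (\log 2)/2$ suffices. I do not anticipate a real obstacle here; the only point requiring care is the invocation of the product formula to uniformly absorb the ``denominator'' $a-b$ across all places, which relies on the hypothesis $t_1 \neq t_2$.
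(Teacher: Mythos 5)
Your proof is correct, and it takes a genuinely more elementary route than the paper's. The paper observes that the assignment $g(t_1,t_2)=(t_2/t_1,(1-t_2)/(1-t_1))$ is a degree-two birational map of $\P^2$, computes its inverse $g^{-1}$, and then appeals to the general height--degree inequality \cite[Theorem B.2.5]{Hindry:Silverman} for rational maps (applied outside the indeterminacy locus) to obtain $h(t_1,t_2)\leq 2\,h(a,b)+C$. You instead unfold what that theorem says in this specific situation: you invert the map by hand, clear the common denominator $a-b$ place by place, apply the product formula (legitimate precisely because $t_1\neq t_2$ forces $a\neq b$), and then use the (ultra)triangle inequality together with $\max\{1,|a|_v,|b|_v\}\geq 1$ to dominate each term by $c_v M_v^2$. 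This is a clean, self-contained argument that replaces the citation with a two-line computation and, as a bonus, pins down an explicit constant $C=(\log 2)/2$. One small remark: the step where you absorb $a-b$ is exactly the mechanism behind the proof of the Hindry--Silverman lemma you are reproducing, so in substance the two arguments agree; yours just does it explicitly for this map rather than invoking the black box. Everything checks out, including the side conditions $a\neq 0$, $b\neq 1$, $a\neq b$ (all equivalent to $t_1,t_2\notin\{0,1\}$ and $t_1\neq t_2$), and the normalization $\sum_{v\mid\infty}r_v=1$ used to convert the local $\log c_v$ terms into the global $\log 2$.
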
 

\begin{proof} 
Consider the birational transformation $g: \P^2 \dashrightarrow \P^2$ defined in affine coordinates by $g(x_1, x_2)=(x_2/x_1, (1-x_2)/(1-x_1))$, with inverse 
	$$g^{-1}(y_1, y_2)=\left(\frac{1-y_2}{y_1-y_2}, \frac{y_1(1-y_2)}{y_1-y_2}\right)$$
of degree $d=2$.  There exists a constant $C$ such that 
$$h(g^{-1}(x:y:z))\leq (\deg g^{-1}) \, h(x:y:z)+C = 2 \, h(x:y:z)+C$$
outside of the indeterminacy set for $g^{-1}$ in $\P^2$ \cite[Theorem B.2.5]{Hindry:Silverman}.  The indeterminacy set for $g^{-1}$ is $\{(0:1:0), (1:0:0), (1:1:1)\}$.  Therefore, letting $(t_1:t_2:1) = g^{-1}(x:y:1)$ for some point $(x:y:1)$, we obtain
  $$h(t_1, t_2)\leq 2\, h(g(t_1, t_2))+C$$
for all $t_1 \not= t_2$ in $\Qbar \setminus \{0,1\}$.  In other words,
  $$h\left(\frac{t_2}{t_1}, \frac{t_2-1}{t_1-1}\right)\geq \frac{1}{2}\, h(t_1, t_2)-\frac12 \, C.$$
\end{proof}

Now fix $t_1 \not= t_2$ in $\Qbar\setminus\{0,1\}$.   From Lemma \ref{half bound}, we know there is a constant $C$ (independent of $t_1$ and $t_2$) such that
\begin{eqnarray}\label{one lower bound 1}
3\,\hat{h}_{t_1} \cdot \hat{h}_{t_2} &=& 3 \sum_{v \in M_K} r_v\, E_v(t_1, t_2) \nonumber \\
 &\geq& 3 \sum_{v \in M_\good(t_1, t_2)} r_v\, E_v(t_1, t_2) \nonumber \\
 &\geq&    \frac{3r}{4}h(t_2/t_1)-\frac{3r^2}{2}h(t_2,t_1)- C
\end{eqnarray}
for any $t_1\neq t_2\in \Qbar\backslash\{0,1\}$.  Replacing $t_i$ in inequality \eqref{one lower bound 1} with $1-t_i$, for $i=1, 2$, we also have
      $$3\,\hat{h}_{1-t_1} \cdot \hat{h}_{1-t_2}\geq \frac{3r}{4}h\left(\frac{1-t_2}{1-t_1}\right)-\frac{3r^2}{2}h(1-t_2,1-t_1)- C.$$
Combining this with Proposition \ref{one sing}, we find that
\begin{equation}\label{one lower bound 2}
3\, \hat{h}_{t_1} \cdot \hat{h}_{t_2}\geq \frac{3r}{4}h\left(\frac{1-t_2}{1-t_1}\right)-\frac{3r^2}{2}h(1-t_2,1-t_1)-C.
\end{equation}
Consequently, by adding the inequalities \eqref{one lower bound 1} and \eqref{one lower bound 2}, we have
      $$6\,\hat{h}_{t_1} \cdot \hat{h}_{t_2}\geq \frac{3r}{4}\left(h(t_2/t_1)+h\left(\frac{1-t_2}{1-t_1}\right)\right)-\frac{3r^2}{2}\left(h(t_2,t_1)+h(1-t_2,1-t_1)\right)-2\, C.$$
Observe that there is a constant $C'>0$ from Lemma \ref{height bound} such that 
         $$h(t_2/t_1)+h\left(\frac{1-t_2}{1-t_1}\right)\geq h\left(\frac{t_2}{t_1}, \frac{1-t_2}{1-t_1}\right) \geq \frac{1}{2}\cdot h(t_1,t_2)-C'.$$
Since $|h(1-t_2, 1-t_1)-h(t_1, t_2)|$ is uniformly bounded over all pairs $t_1, t_2 \in \Qbar$, we may combine the above inequality with the previous to conclude that 
     $$6\, \hat{h}_{t_1} \cdot \hat{h}_{t_2}\geq \frac{3r}{8} h(t_1, t_2)-{3r^2}\cdot h(t_1,t_2)- 6\,C'',$$
In other words, 
  $$\hat{h}_{t_1} \cdot \hat{h}_{t_2}\geq (r/16-r^2/2)h(t_2,t_1)-C'',$$
and the proof of Theorem \ref{pairingasymptotics} is complete by taking $\alpha=r/16-r^2/2$ and $\beta=C''$. \qed

\begin{remark}
If we set $r=1/16$, the constant $\alpha>0$ in Theorem \ref{pairingasymptotics} can be taken to be $\alpha=1/512$.
\end{remark}

\subsection{Proof of Theorem \ref{uniformlowerbound}} 
We will use Theorem \ref{pairingasymptotics} to deduce a uniform lower bound on the height pairing $\hat{h}_s\cdot \hat{h}_t$ for all $s \ne t$ in $\Qbar\setminus\{0, 1\}$.

Suppose there exist parameters $s_n \not= t_n \in \overline{\mathbb{Q}}$ such that 
$$\hat{h}_{s_n} \cdot \hat{h}_{t_n} \rightarrow 0 \ \ \ \ \text{ as } n \rightarrow \infty.$$
Fix $\epsilon > 0$.  For each $n$, choose a number field $K_n$ containing $s_n$ and $t_n$.  By assumption and non-negativity of the local energies $E_v$ (Proposition \ref{local energy}), there is $N \in \mathbb{N}$ such that for all $n > N$, the archimedean contribution to the pairing is less than $\epsilon$; that is, for $n > N$, 
$$\sum_{v \in M_{K_n}^{\infty}} r_v \, E_v({s_n}, {t_n}) < \epsilon,$$
recalling that $r_v = \frac{[{K_n}_v: \mathbb{Q}_v]}{[K_n:\mathbb{Q}]}$ now depends on $n$.  

Let $M_n$ be the set of archimedean places $v$ in $M_{K_n}^{\infty}$ such that $E_v({s_n}, {t_n}) < 2 \epsilon$, noting that for $n > N$, we have
$$\sum_{v \in M_n} r_v \geq \frac{1}{2}.$$
Recall that the local energy $E_v(s,t)$ is continuous in $s$ and $t$, and it vanishes if and only if $s=t$.  So 
there exists a $\delta$, depending only on $\epsilon$, so that, for each $n>N$ and for each place $v \in M_n$, one of the following must hold:
\begin{enumerate}
\item $|t_n - s_n|_v < \delta$ 
\item $\min \{ |s_n|_v, |t_n|_v \} < \delta$ 
\item $\min \{ |s_n-1|_v, |t_n-1|_v \} < \delta$ 
\item $\max \{ |s_n|_v, |t_n|_{v} \} >  1/\delta$
\end{enumerate}
Note that we can take $\delta \rightarrow 0$ as $\epsilon \rightarrow 0$.  We may then, for each $n>N$, choose a subset $M_n'$ of $M_n$ for which $s_n$ and $t_n$ satisfy the same one of the four conditions at all places $v \in M_n'$, and such that 
$$\sum_{v \in M_n'} r_v \geq \frac{1}{8}.$$
We conclude by the product formula that 
$$\max \{ h(s_n - t_n), h(s_n, t_n), h(s_n - 1, t_n -1) \} > \frac{1}{8} \log \frac{1}{\delta}.$$
It then follows from the triangle inequality, combined with shrinking our choice of $\epsilon$, that we have $h(s_n, t_n) \rightarrow \infty$.  The inequality of Theorem \ref{pairingasymptotics} implies that $\hat{h}_{s_n} \cdot \hat{h}_{t_n} \rightarrow \infty$ as well, a contradiction.  This completes the proof of Theorem \ref{uniformlowerbound}.  

\subsection{Proof of Proposition \ref{uniformZhang}}
Fix a number field $K$, and fix $t_1\not=t_2$ in $K \setminus\{0,1\}$.  Let $\| \cdot \|_i$ denote the adelic metric on the line bundle $\mathcal{O}_{\P^1}(1)$ associated to the height $\hat{h}_{t_i}$.  Let $\overline{L}$ denote the line bundle $\mathcal{O}_{\P^1}(1)$ equipped with the metric $(\|\cdot \|_1 \| \cdot \|_2)^{1/2}$; its associated height function is
	$$h_{\overline{L}}(x) = \frac12 \left(\hat{h}_{t_1}(x) + \hat{h}_{t_2}(x)\right).$$
Zhang's inequality on the essential minimum of a height function implies that 
	$$\liminf_{n \to \infty} h_{\overline{L}}(x_n) \; \geq  \;  (h_{\overline{L}}\cdot h_{\overline{L}})/(2 \deg L) = \frac12 \left( h_{\overline{L}}\cdot h_{\overline{L}} \right)$$
along any infinite sequence of distinct points $x_n \in \P^1(\Qbar)$ \cite[Theorem 1.10]{Zhang:adelic}.  In particular, the set
	$$\{ x \in \P^1(\Qbar):  \hat{h}_{t_1}(x) + \hat{h}_{t_2}(x)  \leq b \}$$
is finite for any choice of $b < h_{\overline{L}}\cdot h_{\overline{L}}$.  

By the linearity of the intersection pairing, we see that 
	$$h_{\overline{L}}\cdot h_{\overline{L}} = \frac14 \, \hat{h}_{t_1}\cdot \hat{h}_{t_1} + \frac12 \, \hat{h}_{t_1}\cdot \hat{h}_{t_2} + \frac14 \,\hat{h}_{t_2}\cdot \hat{h}_{t_2} = \frac12 \, \hat{h}_{t_1}\cdot \hat{h}_{t_2}.$$
Therefore, we may choose any $b < \delta/2$ for the $\delta$ of Theorem \ref{uniformlowerbound}, and the proposition is proved.


\bigskip
\section{Proof of Theorem \ref{boundingN}} \label{trianglesection}

Fix any $b\geq 0$ such that $b < \delta/2$ for the $\delta$ of Theorem \ref{uniformlowerbound}.  Recall from Proposition \ref{uniformZhang} that the set
\begin{equation} \label{S def}
	S(b, t_1, t_2) := \{ x \in \P^1(\Qbar):  \hat{h}_{t_1}(x) + \hat{h}_{t_2}(x) \leq b\}
\end{equation}
is finite for every pair $t_1 \not= t_2 \in \Qbar\setminus\{0,1\}$. Note that $\{0,1,\infty\} \subset S(b,t_1,t_2)$ so that $|S(b,t_1,t_2)| \geq 3$  for all $t_1 \not= t_2$ in $\Qbar\setminus\{0,1\}$ and all $b\geq 0$.  In this section, we prove the following generalization of Theorem \ref{boundingN}.  

\begin{theorem} \label{boundingNb}
Let $b\geq 0$ be chosen so that $b < \delta/2$ for the $\delta$ of Theorem \ref{uniformlowerbound}. 
For all $\eps>0$, there exists a constant $C(\eps)$ so that 
  $$ \hat h_{t_1}\cdot \hat h_{t_2} \leq 4b + \left( \eps + \frac{C(\eps)}{|S(b,t_1,t_2)|} \right) (h(t_1, t_2) + 1),$$
for all $t_1\not= t_2$ in $\Qbar\setminus\{0,1\}$, for the set $S(b,t_1,t_2)$ defined by \eqref{S def}.
\end{theorem}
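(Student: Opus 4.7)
Let $F := S(b,t_1,t_2)$ and $N := |F|$. The condition $\hat{h}_{t_1}(x)+\hat{h}_{t_2}(x)\le b$ for every $x\in F$ gives, upon averaging, $\hat{h}_{t_1}(F) + \hat{h}_{t_2}(F) \le b$, where $\hat{h}_{t_i}(F) := \frac{1}{N}\sum_{x\in F}\hat{h}_{t_i}(x)$. Fix a number field $K$ containing $F\cup\{t_1,t_2\}$. The plan is to apply the $\ell^2$-triangle inequality for the Fili metric $\dist(h_1,h_2) = \sqrt{h_1\cdot h_2}$ on the space of adelic heights (Section \ref{metric definition}) with an intermediate adelic height built from a suitably regularized empirical measure of $F$, thereby reducing $\hat{h}_{t_1}\cdot\hat{h}_{t_2}$ to a constant multiple of $\hat{h}_{t_1}(F)+\hat{h}_{t_2}(F)\le b$ plus a controllable regularization error.

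For each $v \in M_K$ let $[F]_v$ be the uniform probability measure on $F$ in $\P^{1,an}_v$ and let $[F]_{r,v}$ denote its $r_v$-smoothing (each atom $\frac{1}{N}\delta_x$ replaced by a uniform probability of mass $\frac{1}{N}$ on the archimedean circle or Berkovich sphere of radius $r_v$ about $x$); take $r_v=1$ at every non-archimedean place of good reduction for both $f_{t_1}$ and $f_{t_2}$, and $r_v$ small at the finitely many remaining places. Then $\{[F]_{r,v}\}_v$ has continuous potentials and defines a unique adelic height $h_{F,r}$ on $\mathcal{O}_{\P^1}(1)$ with $h_{F,r}\cdot h_{F,r}=0$. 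Applying the triangle inequality and $(a+b)^2\le 2(a^2+b^2)$ yields
\[
\hat{h}_{t_1}\cdot\hat{h}_{t_2} \le 2\,\hat{h}_{t_1}\cdot h_{F,r} + 2\,\hat{h}_{t_2}\cdot h_{F,r}.
\]
To evaluate $\hat{h}_{t_i}\cdot h_{F,r}$, let $h_F^{\infty}$ be the alternative adelic height with the same curvatures $\{[F]_{r,v}\}$ but normalized by $h_F^{\infty}(\infty)=0$, so that $h_{F,r}=h_F^{\infty}-\frac12 h_F^{\infty}\cdot h_F^{\infty}$. Computing $h_F^{\infty}\cdot h_F^{\infty}$ via \eqref{pairing with infinity} with the explicit potential of $[F]_{r,v}$, the off-diagonal terms $\frac{1}{N^2}\sum_{x\neq y}\sum_v r_v\log|x-y|_v$ cancel by the product formula (cf.\ \eqref{finite sets}), leaving only the diagonal self-energy $\frac{1}{N}\sum_v r_v\log r_v$ plus a remainder of order $r_{\max}/\mathrm{sep}_{\min}$. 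Hence
\[
\hat{h}_{t_i}\cdot h_{F,r} = \hat{h}_{t_i}(F) + R_i(r) + C_F,\qquad R_i(r) := \sum_v r_v\int\lambda_{t_i,v}\,d([F]_{r,v}-[F]_v),\qquad C_F \approx \tfrac{1}{2N}\sum_v r_v\log(1/r_v).
\]

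Now choose $r_v\le c(\eps)\min\{|t_i|_v^2,|t_i-1|_v^2,|t_i|_v^{-2}:i=1,2\}$ at each archimedean place (and analogously at the finitely many non-archimedean bad places, using the escape-rate formulas of Section \ref{nonarchsection}). By Proposition \ref{arch regularization} the archimedean contribution to $|R_i(r)|$ is at most $\eps$ times a sum of $\max\{\log|t_i|_v^{-1},\log|t_i-1|_v^{-1},\log|t_i|_v,1\}$ terms, which after summing with $\sum_v r_v\log^+|x|_v\le 2h(x)$ is at most $\frac{\eps}{4}(h(t_1,t_2)+1)$; the non-archimedean contribution is absorbed similarly. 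The same choice of $r_v$ forces $\sum_v r_v\log(1/r_v)\le C(\eps)(h(t_1,t_2)+1)$, so $C_F\le \frac{C(\eps)}{4N}(h(t_1,t_2)+1)$. Assembling,
\[
\hat{h}_{t_1}\cdot\hat{h}_{t_2} \le 2(\hat{h}_{t_1}(F)+\hat{h}_{t_2}(F)) + 2(R_1+R_2) + 4\,C_F \le 2b + \Bigl(\eps + \tfrac{C(\eps)}{N}\Bigr)(h(t_1,t_2)+1),
\]
which is stronger than the stated bound with $4b$ in place of $2b$.

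The main obstacle is uniform control of $R_i(r)$ as parameters $t_i$ approach the cusps $\{0,1,\infty\}$: at such a place $v$ the potential $\lambda_{t_i,v}$ has a plateau of size $O(\log|t_i|_v)$ on $v$-adic disks, so smoothing atoms naively costs proportional to $\log|t_i|_v$ per place and would render the bound useless. The hybrid-space argument behind Proposition \ref{arch regularization} --- specifically the continuous extension of the rescaled potential $\phi(t,z)=\Phi_t(z)/\log|t|^{-1}$ across the central Berkovich fiber over $\C((t))$ --- is exactly what lets us replace this $O(\log|t_i|_v)$ by $\eps\log|t_i|_v$ uniformly in $t$. Balancing the resulting error $\eps\log|t_i|_v^{-1}$ against the smoothing self-energy $\frac{1}{N}\log(1/r_v)$, via the choice of $r_v$ as a positive power of $\min\{|t_i|_v,|t_i-1|_v,|t_i|_v^{-1}\}$, is what forces the asymmetric structure $\eps + C(\eps)/N$ in the final bound.
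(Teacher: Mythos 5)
Your proposal follows the same strategy as the paper's proof: compare $\hat{h}_{t_1}$ and $\hat{h}_{t_2}$ to an intermediate height $h_{F,\eta}$ built from a regularized empirical measure of $F = S(b,t_1,t_2)$, apply the Fili triangle inequality, and control the regularization error at each place with the hybrid-space continuity of Proposition \ref{arch regularization} (archimedean) and the explicit plateaus of the escape rate (non-archimedean). The paper's Proposition \ref{upper bound with F} is exactly your decomposition $\hat{h}_{t_i}\cdot h_{F,\eta}\leq \hat{h}_{t_i}(F)+R_i+C_F$, and the choice of $\eta_v$ is the same. Your use of $(a+b)^2\leq 2(a^2+b^2)$ together with $\hat{h}_{t_1}(F)+\hat{h}_{t_2}(F)\leq b$ tightens the paper's $4b$ (which comes from $(a+b)^2\leq 4\max(a,b)^2$ with $\hat{h}_{t_i}(F)\leq b$ separately) to $2b$; this is a cosmetic improvement and the theorem only asserts $4b$.

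Two technical points deserve care. First, $\infty\in S(b,t_1,t_2)$, and the regularized adelic measure $m_{F,\eta,v}$ is built from circles about finite points; the paper sets $F=S(b,t_1,t_2)\setminus\{\infty\}$ and then recovers the factor $|S|$ from $|F|=|S|-1\geq 2$. Second, your claim that the off-diagonal terms cancel by the product formula \emph{leaving only a remainder of order $r_{\max}/\mathrm{sep}_{\min}$} is a genuine gap as stated: the minimal separation of points in $F$ is not controlled uniformly in $t_1,t_2$, so that remainder is not $\leq C(\eps)/|F|$ in general. The clean route, which the paper takes, is to invoke \cite[Lemma 4.11]{FRL:equidistribution} and \cite[Lemma 12]{Fili:energy}, which give the one-sided bound
\[
(m_{F,\eta,v}, m_{F,\eta,v})_v \;\leq\; ([F]_v,[F]_v)_v + \frac{-\log\eta_v}{|F|}
\]
valid regardless of separation; after summing over $v$, the $([F]_v,[F]_v)_v$ terms vanish exactly by the product formula and no residual separation-dependent error survives. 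With that substitution your argument is complete and essentially coincides with the paper's.
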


\noindent
Note that Theorem \ref{boundingN} follows from Theorem \ref{boundingNb} by setting $b=0$.

\subsection{Adelic measures and heights associated to a finite set}
Fix a number field $K$, and suppose that $F$ is a finite set in $\Kbar$ which is $\Gal(\Kbar/K)$-invariant.  Let $\eta$ be a collection of positive real numbers
   $$\eta:=\{\eta_v\}_{v\in M_K}$$
with $\eta_v=1$ for all but finitely many $v\in M_K$.  For archimedean $v\in M_K$ and $x \in F$, we let $m_{x,v}$ denote the Lebesgue probability measure on the circle of radius $\eta_v$ centered at the point $x \in F$.  We then set 
	$$m_{F, \eta, v} = \frac{1}{|F|}\sum_{x\in F}m_{x, v}.$$
Similarly, for each non-archimedean $v \in M_K$, we let $m_{F,\eta, v}$ denote the probability measure distributed uniformly on the points $\zeta_{x, \eta_v}$ in $\A_v^{1,an}$ over all $x \in F$.  Then $m_{F,\eta} = \{m_{F, \eta, v}\}_{v\in M_K}$ is an adelic measure in the sense of \cite{FRL:equidistribution}. It gives rise to a unique height $h_{F,\eta}$ on $\P^1(\Qbar)$ associated to a continuous and semipositive adelic metric on $\mathcal{O}_{\P^1}(1)$ with curvature distributions given by $m_{F, \eta, v}$ and satisfying 
\begin{equation} \label{eta normalized}
  h_{F,\eta}\cdot h_{F,\eta}=0.
\end{equation}
Its local heights are given by 
	$$\lambda_{F,\eta, v}(z) = \alpha_v + \frac{1}{|F|} \sum_{x \in F} \log\max\{|z - x|_v, \eta_v\},$$
for $z\in \C_v$ and suitable constants $\alpha_v$; taking 
	$$\alpha_v = -\frac{1}{2\, |F|} \sum_{x \in F} \int\log\max\{|z - x|_v, \eta_v\} \, d m_{F,\eta,v}$$ 
gives \eqref{eta normalized}.

\begin{remark}
The height $h_{F,\eta}$
will generally not admit sequences of ``small" points, meaning sequences $x_n \in \P^1(\Qbar)$ with $h_{F,\eta}(x_n) \to 0$.  In fact, for any choices of $F$ and $\eta$ such that $\sum_v r_v \, \alpha_v \not= 0$, the essential minimum of $h_{F, \eta}$ is positive.  
\end{remark}

\subsection{An upper bound on the height pairing}
Now suppose that $t_1$ and $t_2$ lie in $K\setminus\{0,1\}$.  Recall that $\mu_t$ and $\hat{h}_t$ respectively denote the measure and height associated to the curve $E_t$.  By the triangle inequality for the distance function of \S\ref{metric definition}, we have 
\begin{equation} \label{triangle}
   \left(\hat h_{t_1}\cdot \hat h_{t_2}\right)^{1/2}\leq \left(\hat h_{t_1}\cdot h_{F,\eta}\right)^{1/2}+\left(\hat h_{t_2}\cdot h_{F,\eta}\right)^{1/2} 
\end{equation}
for any choice of $F$ and $\eta$.  By symmetry and bilinearity of the mutual energy,
\begin{eqnarray*}  
 \hat h_{t_i} \cdot h_{F,\eta} &=& \frac{1}{2} \sum_{v \in M_K} r_v\, (\mu_{t_i,v} - m_{F,\eta, v}, \mu_{t_i, v} - m_{F,\eta, v})_v \\
 	&=&   \frac{1}{2} \sum_{v \in M_K} r_v\, \left( (\mu_{t_i,v}, \mu_{t_i,v})_v - 2 \,( m_{F,\eta, v}, \mu_{t_i, v})_v  + (m_{F,\eta,v}, m_{F,\eta, v})_v\right)
\end{eqnarray*}
for $i = 1,2$.  For fixed $i$, writing the local height for $\hat{h}_{t_i}$ as $\lambda_{t_i, v} = \log|z|_v + c_v + o(1)$ for $|z|_v \to \infty$ yields
	$$\sum_v r_v \, (\mu_{t_i, v}, \mu_{t_i, v})_v =  - \sum_v r_v \int (\lambda_{t_i,v} - c_v) \, d\mu_{t_i,v} = 0$$
from \eqref{zero sum}.   Therefore,  
   $$\hat h_{t_i} \cdot h_{F,\eta} =  \frac{1}{2} \sum_{v \in M_K} r_v\, \left(-2(\mu_{t_i,v}, m_{F,\eta, v})_v + (m_{F,\eta,v}, m_{F,\eta,v})_v\right).$$
   
Recall from \S\ref{Mutualenergy} that $[F]_v$ is the probability measure on $\P_v^{1,an}$ distributed equally on the elements of $F$ for each $v\in M_K$.  By \cite[Lemma 4.11]{FRL:equidistribution} and \cite[Lemma 12]{Fili:energy}, we have 
	$$(m_{F,\eta,v}, m_{F,\eta,v})_v \leq ([F]_v, [F]_v)_v + \frac{-\log \eta_v}{|F|}.$$
It follows that 
\begin{eqnarray} \label{upper bound almost} 
\hat h_{t_i} \cdot h_{F,\eta} &\leq&  \frac{1}{2} \sum_{v \in M_K} r_v\cdot\left(-2\,(\mu_{t_i,v}, m_{F,\eta,v})_v + ([F]_v, [F]_v)_v + \frac{-\log \eta_v}{|F|}\right)  \nonumber \\
&=& \frac{1}{2} \sum_{v \in M_K} r_v\cdot \left( -2\,(\mu_{t_i,v}, m_{F,\eta, v})_v + \frac{-\log \eta_v}{|F|}\right) 
\end{eqnarray}
with the final equality following from \eqref{finite sets}.

\begin{prop}  \label{upper bound with F}
Suppose $t \not= 0,1$ lies in a number field $K$.  Assume that $F$ is a finite, $\Gal(\Kbar/K)$-invariant set of points.  Then 
$$\hat h_t \cdot h_{F,\eta} \; \leq \;   \hat{h}_t(F) + \sum_{v \in M_K} r_v\, \left( - (\mu_{t,v}, m_{F,\eta, v})_v+ (\mu_{t,v}, [F]_v)_v + \frac{-\log \eta_v}{2 \, |F|}\right).$$
for any choice of $\eta = \{\eta_v\}_v$ with $\eta_v = 1$ for all but finitely many $v \in M_K$.  
\end{prop}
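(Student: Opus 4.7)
The plan is to obtain the stated bound by adding a vanishing adelic identity to the estimate \eqref{upper bound almost} already derived in this section. Concretely, I will show that
$$\hat h_t(F) + \sum_{v\in M_K} r_v\, (\mu_{t,v}, [F]_v)_v \;=\; 0,$$
where $\hat h_t(F) = \tfrac{1}{|F|}\sum_{x\in F}\hat h_t(x)$ denotes the averaged canonical height on $F$. Once this identity is in hand, the proposition is immediate: inserting this zero into \eqref{upper bound almost} rearranges its right-hand side into exactly the expression in the statement.

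To establish the identity, I would work one place at a time. Fix $v\in M_K$ and consider the function $u_v(x):=\int \log\delta_v(z,x)\,d\mu_{t,v}(z)$. By Fubini, $u_v$ is a potential for $\mu_{t,v}-\delta_\infty$ on $\P^{1,an}_v$, with asymptotic $\log|x|_v + o(1)$ as $|x|_v\to\infty$, since $\mu_{t,v}$ has unit mass and compact support in $\A^{1,an}_v$. The local height $\lambda_{t,v}$ is another continuous potential for $\mu_{t,v}-\delta_\infty$, with expansion $\lambda_{t,v}(z)=\log|z|_v+c_v+o(1)$ at $\infty$. Two continuous potentials for the same mass-zero measure on $\P^{1,an}_v$ differ by a constant, and comparing asymptotics forces
$$u_v(x) \;=\; \lambda_{t,v}(x) - c_v$$
on $\A^{1,an}_v$. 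Integrating against $[F]_v$ and unfolding the definition of mutual energy from \S\ref{Mutualenergy} yields
$$(\mu_{t,v},[F]_v)_v \;=\; -\frac{1}{|F|}\sum_{x\in F}\lambda_{t,v}(x) \;+\; c_v;$$
the deletion of the diagonal in the definition of mutual energy is harmless here, since $\mu_{t,v}$, as the canonical measure of the Latt\`es map $f_t$, carries no atoms at classical points.

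Summing with weights $r_v$ and combining the local decomposition $\hat h_t(F) = \sum_v r_v\tfrac{1}{|F|}\sum_{x\in F}\lambda_{t,v}(x)$ from \S\ref{canonical height} with the product-formula-style vanishing $\sum_v r_v c_v = 0$ from \eqref{height at infinity} yields the claimed identity and thus the proposition. The only technical wrinkle is that $F$ may fail to lie in $K$; I would handle this by passing to a finite extension $K'/K$ containing the coordinates of $F$, noting that both sides of the identity are unchanged under base change because $\sum_{w\mid v}[K'_w:K_v]=[K':K]$ collapses the places of $K'$ above each $v\in M_K$. This is routine and presents no serious obstacle; the only substantive step is the uniqueness-of-potentials comparison identifying $u_v$ with $\lambda_{t,v}-c_v$.
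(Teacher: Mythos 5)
Your proposal is correct and follows the same route as the paper's proof: both arguments reduce to the vanishing identity $\hat h_t(F)+\sum_v r_v(\mu_{t,v},[F]_v)_v=0$ and then add this zero to the already-derived bound \eqref{upper bound almost}. The paper simply asserts the identity in one displayed chain of equalities, while you supply the justification via the uniqueness-of-potentials comparison $u_v=\lambda_{t,v}-c_v$, the no-atoms observation for the diagonal, and the product-formula vanishing $\sum_v r_v c_v=\hat h_t(\infty)=0$ from \eqref{height at infinity}; these are precisely the facts the paper is implicitly invoking.
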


\begin{proof}
The height of $F$ is computed as 
  $$\hat{h}_t(F) = \frac{1}{|F|}\sum_{x\in F} \hat h_t(x)=\hat{h}_t(\infty) -\sum_{v\in M_K} r_v\, (\mu_{t,v}, [F]_v)_v=-\sum_{v\in M_K} r_v\, (\mu_{t,v}, [F]_v)_v,$$
and therefore we may add $\hat{h}_t(F) + \sum_v r_v\, (\mu_{t, v}, [F]_v)_v$ to the right hand side of \eqref{upper bound almost}.  
\end{proof}

\subsection{Proof of Theorem \ref{boundingNb}}
Fix $0 \leq b < \delta/2$ so that Proposition \ref{uniformZhang} is satisfied for all $t_1 \not= t_2$ in $\Qbar\setminus\{0,1\}$.  Now fix $t_1 \not= t_2$ in $\Qbar\setminus\{0,1\}$ and a number field $K$ containing $t_1$ and $t_2$.   Set
 $$F = \{x \in \P^1(\Qbar):  \hat{h}_{t_1}(x) + \hat{h}_{t_2}(x) \leq b\} \setminus \{\infty\} = S(b,t_1,t_2) \setminus\{\infty\},$$ 
so $F$ is a finite, $\Gal(\Kbar/K)$-invariant set with 
	$$\hat{h}_{t_i}(F) \leq b$$
for $i = 1, 2$.  At each non-archimedean place $v$ of $K$, we set
	$$\eta_v := \min \{ 1, |t_1(t_1-1)|_v, |t_2(t_2-1)|_v \}.$$
Now fix $\eps'>0$.  For each archimedean $v$, we set
   $$\eta_v:=c(\eps')\min_{i=1,2}\min\{|t_i|_v^2, |t_i-1|_v^2, |t_i|_v^{-2}\}$$
where the constant $c(\eps')$ is from Proposition \ref{arch regularization}.  Let $\eta = \{\eta_v\}_v$; observe that $\eta_v = 1$ for all but finitely many $v$, and 
\begin{equation} \label{eta bound}
\sum_{v \in M_K} - r_v \log \eta_v \leq 2\, \left(h(t_1)+h(1-t_1)+h(t_2)+h(1-t_2)\right)-\frac12 \log c(\eps').
\end{equation}

For non-archimedean $v$, the explicit form of the measure $\mu_{t_i, v}$ (described in Section \ref{nonarchsection}) implies that
	$$(\mu_{t_i,v}, m_{F, \eta,v})_v =  (\mu_{t_i,v}, [F]_v)_v$$
for this choice of $\eta$, because the potentials for $\mu_{t_i,v}$ are constant on disks of radius $\eta_v$.  

We thus obtain from Proposition \ref{upper bound with F} that 
\begin{eqnarray*} \hat h_{t_i} \cdot h_{F,\eta} &\leq&  b \; + \sum_{v \in M^0_K} r_v \frac{-\log \eta_v}{2|F|}  \\ 
&& \;+\; \sum_{v \in M_K^{\infty}} r_v\, \left(-(\mu_{t_i,v},m_{F,\eta,v})_v + (\mu_{t_i,v}, [F]_v)_v + \frac{-\log \eta_v}{2|F|}\right)
\end{eqnarray*} 
for $i = 1,2$, where $M_K^0$ denotes the non-archimedean places and $M_K^\infty$ the archimedean places.

We have for $v \in M_K^{\infty}$ that
$$-(\mu_{t_i,v}, m_{F,\eta,v})_v +  (\mu_{t_i}, [F]_v)_v \leq \eps'  \log\max\{|t_i|_v, |t_i|_v^{-1}, |t_i-1|_v^{-1}\} $$
for $i = 1,2$ by Proposition \ref{arch regularization}.

Since the logarithmic Weil height satisfies $2\, h(x) = \sum_v |\log|x|_v|$, we thus obtain 
\begin{eqnarray*}
 \hat h_{t_i} \cdot h_{F,\eta} &\leq& b +  2\eps'\left(h(t_i)+h(t_i-1)\right) \\
  &&  + \; \frac{2\, \left(h(t_1)+h(1-t_1)+h(t_2)+h(1-t_2)\right)-\frac12 \log c(\eps')}{|F|}
\end{eqnarray*}
for $i = 1, 2$.  Since $h(1-t_i) \leq h(t_i) + \log 2 \leq h(t_1, t_2) + \log 2$ for $i = 1,2$, this inequality becomes 
   $$\hat h_{t_i} \cdot h_{F,\eta} \; \leq\; b+ 2\eps'\left(2 h(t_1, t_2)+\log 2 \right) +\frac{1}{|F|} (8 h(t_1, t_2)+4 \log 2 -\frac12 \log c(\eps'))$$
for $i = 1,2$.

By the triangle inequality \eqref{triangle}, we have 
\begin{eqnarray*} \left(\hat h_{t_1}\cdot \hat h_{t_2}\right)^{1/2}&\leq& \left(\hat h_{t_1}\cdot h_{F,\eta}\right)^{1/2}+\left(\hat h_{t_2}\cdot h_{F,\eta}\right)^{1/2}\\
&\leq& 2 \left(  b+ 2\eps'\left(2 h(t_1, t_2)+\log 2 \right) +\frac{1}{|F|} (8 h(t_1, t_2)+4 \log 2 -\frac12 \log c(\eps'))\right)^{1/2}, 
\end{eqnarray*} 
so 
\begin{eqnarray} \label{pairing eta height}
\hat h_{t_1}\cdot \hat h_{t_2} &\leq & 4 \left( b+ 2\eps'\left(2 h(t_1, t_2)+\log 2 \right) +\frac{1}{|F|} (8 h(t_1, t_2)+4 \log 2 -\frac12 \log c(\eps'))\right) \nonumber \\
&=&  4b + \left( \frac{32}{|F|} + 16 \eps' \right) h(t_1, t_2) + \frac{16 \log 2 - 2 \log c(\eps')}{|F|} + 8 \eps' \log 2.
\end{eqnarray}

Fix any $\eps>0$, and choose $\eps' < \eps/16$.  Since $|F|=|S(b,t_1,t_2)|-1\geq 2$, we can find a large constant $C(\eps)$ satisfying 
	$$\frac{32}{|F|} + 16 \eps'  \leq  \eps+\frac{C(\eps)}{|S(b,t_1,t_2)|}$$
and
  $$\frac{16 \log 2 - 2 \log c(\eps')}{|F|} + 8 \eps' \log 2 \; \leq\; \eps+\frac{C(\eps)}{|S(b,t_1,t_2)|}.$$
The inequality \eqref{pairing eta height} then yields 
  $$ \hat h_{t_1}\cdot \hat h_{t_2} \leq  4b + \left( \eps + \frac{C(\eps)}{|S(b,t_1, t_2)|} \right) (h(t_1, t_2) + 1),$$
concluding the proof of Theorem \ref{boundingNb}.

\bigskip
\section{Proof of Theorem \ref{Legendretheorem}} \label{Legendresection}

In this section, we deduce Theorem \ref{Legendretheorem} from Theorems \ref{uniformlowerbound}, \ref{pairingasymptotics}, \ref{boundingN} for algebraic values of $t_1$ and $t_2$; we then extend the result to hold for parameters $t_i$ in $\C$, via a specialization argument.  In fact, we prove the following stronger result over $\Qbar$:  

\begin{theorem} \label{LegendreBogomolov}
There exist constants $B$ and $b>0$ so that 
	$$\big| \{x \in \P^1(\Qbar):  \hat{h}_{t_1}(x) + \hat{h}_{t_2}(x) \leq b\} \big| \; \leq \; B$$
for all $t_1 \not= t_2$ in $\Qbar\setminus\{0,1\}$.  
\end{theorem}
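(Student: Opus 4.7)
The plan is to combine the two lower bounds on $\hat{h}_{t_1}\cdot\hat{h}_{t_2}$ from Theorems \ref{uniformlowerbound} and \ref{pairingasymptotics} with the upper bound from Theorem \ref{boundingNb} (which is the generalization of Theorem \ref{boundingN} to nonzero $b$), splitting into regimes according to the size of $h(t_1,t_2)$. Set $N := |S(b,t_1,t_2)|$ for the set whose cardinality we wish to bound. First I would fix any $b > 0$ with $b < \delta/8$, where $\delta$ is the constant of Theorem \ref{uniformlowerbound}, so that in particular $\delta - 4b > \delta/2 > 0$. Then Theorem \ref{boundingNb} gives, for every $\eps > 0$,
\begin{equation*}
\hat{h}_{t_1}\cdot \hat{h}_{t_2} - 4b \;\leq\; \left(\eps + \frac{C(\eps)}{N}\right)(h(t_1,t_2) + 1).
\end{equation*}

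In the large-height regime, Theorem \ref{pairingasymptotics} gives the stronger lower bound $\hat{h}_{t_1}\cdot\hat{h}_{t_2} \geq \alpha h(t_1,t_2) - \beta$. Choose $\eps := \alpha/4$ and a threshold $H = H(\alpha,\beta,b)$ large enough that $\alpha h - (\beta + 4b) - (\alpha/4)(h+1) \geq (\alpha/2)(h+1)$ whenever $h \geq H$. For any pair $(t_1,t_2)$ with $h(t_1,t_2) \geq H$ we then get
\begin{equation*}
\tfrac{\alpha}{2}(h(t_1,t_2)+1) \;\leq\; \frac{C(\alpha/4)}{N}(h(t_1,t_2)+1),
\end{equation*}
forcing $N \leq 2C(\alpha/4)/\alpha$.

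In the small-height regime $h(t_1,t_2) < H$, I instead use Theorem \ref{uniformlowerbound}, which supplies $\hat{h}_{t_1}\cdot \hat{h}_{t_2} - 4b \geq \delta/2$. Choosing $\eps' > 0$ small enough that $\eps'(H+1) \leq \delta/4$, the upper bound from Theorem \ref{boundingNb} rearranges to
\begin{equation*}
N \;\leq\; \frac{4 C(\eps')(H+1)}{\delta},
\end{equation*}
a bound which is again independent of $t_1,t_2$. Taking $B$ to be the maximum of the two bounds obtained in the two regimes completes the proof.

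There is no serious obstacle at this step: all the real work sits inside Theorems \ref{uniformlowerbound}, \ref{pairingasymptotics} and \ref{boundingNb}. The only point that needs mild care is coordinating the choices of $\eps, \eps', H$ so that both regimes are handled simultaneously; this is purely a matter of fixing constants in the right order (first $b$, then $\eps$ from the large-height inequality, which pins down $H$ via $\alpha,\beta$, then $\eps'$ from the small-height inequality).
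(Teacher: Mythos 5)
Your proposal is correct and takes essentially the same approach as the paper's own proof: fix $b < \delta/8$, split into a large-height regime (where Theorem \ref{pairingasymptotics} with $\eps = \alpha/4$ gives a bound on $N$) and a small-height regime (where Theorem \ref{uniformlowerbound} with $\eps'$ tuned to $H$ gives the other bound), then take the maximum. The precise choice of threshold $H$ and the resulting numerical constants differ slightly from the paper's, but the order in which constants are fixed and the way the three theorems are combined are identical.
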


\subsection{Proof of Theorem \ref{LegendreBogomolov}} 

Let $\delta > 0$ be as in Theorem \ref{uniformlowerbound} so that
$$\hat{h}_{t_1} \cdot \hat{h}_{t_2} \geq \delta$$
for all $t_1 \not= t_2$ in $\Qbar\setminus\{0,1\}$.  Fix 
	$$0 < b < \delta/8$$ 
so that, from Proposition \ref{uniformZhang}, the set 
  $$S(b,t_1, t_2) = \{x \in \P^1(\Qbar):  \hat{h}_{t_1}(x) + \hat{h}_{t_2}(x) \leq b\}$$
is finite for all $t_1 \not= t_2$ in $\Qbar\setminus\{0,1\}$.  
Let $h(t_1, t_2)$ be the naive logarithmic height on $\mathbb{A}^2(\Qbar)$.  Fix $H> \frac{2 \beta}{\alpha}$ for the $\alpha, \beta$ of Theorem \ref{pairingasymptotics} and such that 
\begin{equation} \label{big H} \frac{H - 8b/\alpha}{H+1} > 3/4.
\end{equation}

Suppose that $t_1 \ne t_2 \in \overline{\mathbb{Q}}$ satisfy $h(t_1, t_2) \geq H$.  Then for $\eps = \frac{\alpha}{4}$, there exists by Theorem \ref{boundingNb} a constant $C$ such that 
$$\hat{h}_{t_1} \cdot \hat{h}_{t_2} \leq 4b + \left( \frac{\alpha}{4} + \frac{C}{|S(b,t_1, t_2)|} \right) (h(t_1, t_2) + 1).$$
On the other hand, by Theorem \ref{pairingasymptotics} and the choice of $H$, we have
$$\frac{\alpha}{2} \, h(t_1, t_2) \leq \alpha h(t_1, t_2) - \beta \leq \hat{h}_{t_1} \cdot \hat{h}_{t_2}.$$
Therefore 
$$\frac{\alpha}{2} \, h(t_1, t_2) \leq 4b + \left( \frac{\alpha}{4} + \frac{C}{|S(b,t_1, t_2)|} \right) (h(t_1, t_2) + 1),$$
and so
$$|S(b, t_1, t_2)| \leq \frac{C}{ \left( \frac{\alpha h/2  - 4b}{h+1} \right) - \frac{\alpha}{4}} = \frac{C}{  \frac{\alpha}{2} \left(\frac{ h  - 8b/\alpha}{h+1} \right) - \frac{\alpha}{4}} \leq \frac{8C}{\alpha}$$
for $h := h(t_1,t_2) \geq H$, from \eqref{big H}.

Suppose now that $t_1 \ne t_2 \in \overline{\mathbb{Q}}$ satisfy $h(t_1, t_2) < H$.  Set $\eps' = \frac{\delta}{4(H+1)}$, and find a constant $C'$ as in Theorem \ref{boundingNb} so that
$$\hat{h}_{t_1} \cdot \hat{h}_{t_2} \leq  4b + \left( \eps' + \frac{C'}{|S(b,t_1, t_2)|} \right) (h(t_1, t_2) + 1),$$
and thus, since $b < \delta/8$, we have
$$\delta/2 < \delta - 4b \; \leq\;  \left( \eps' + \frac{C'}{|S(b,t_1, t_2)|} \right) (h(t_1, t_2) + 1).$$
We conclude that 
$$|S(b,t_1, t_2)| \leq \frac{4(H+1)C'}{\delta},$$
providing a uniform bound also for $t_1$ and $t_2$ satisfying $h(t_1, t_2) < H$.  This completes the proof of Theorem \ref{LegendreBogomolov}.

\subsection{Specialization:  proof of Theorem \ref{Legendretheorem}}
We implement a standard specialization argument to deduce Theorem \ref{Legendretheorem} from Theorem \ref{LegendreBogomolov}. Note that the division polynomials for the Legendre curve $E_t$ have coefficients in $\mathbb{Q}[t]$; see, for example, \cite[Exercise 3.7]{Silverman:elliptic}.  Let $B$ be the uniform bound obtained in Theorem \ref{LegendreBogomolov}, so that
\begin{equation}\label{common torsion number} 
|\pi(E_{t_1}^\mathit{tors}) \cap \pi(E_{t_2}^\mathit{tors})| = \big| \{x \in \P^1(\Qbar):  \hat{h}_{t_1}(x) = \hat{h}_{t_2}(x) = 0\} \big|  \leq B
\end{equation}
for all $t_1\neq t_2\in \Qbar\setminus \{0,1\}$.  Assume that there exist $t_1\neq t_2\in \C\setminus \{0,1\}$ with 
   $$N(t_1, t_2) := |\pi(E_{t_1}^\mathit{tors}) \cap \pi(E_{t_2}^\mathit{tors})| > B$$
and $t_1$ transcendental.  If $x \in \pi(E_{t_1}^\mathit{tors}) \cap \P^1(\Qbar),$ then $x \in \pi(E_{t}^{\mathit{tors}})$ for all $t\in \C\setminus \{0, 1\}$ as it is a root of a division polynomial.  It follows that there is at least one non-algebraic point $x \in \pi(E_{t_1}^\mathit{tors}) \cap \pi(E_{t_2}^\mathit{tors})$, as only $x = 0, 1, \infty$ are torsion images for all $t \in \C\setminus\{0,1\}$ \cite[Proposition 1.4]{DWY:Lattes}.

Now let
	  $$S:=\{x_1, x_2, \cdots, x_N\} = \pi(E_{t_1}^\mathit{tors}) \cap \pi(E_{t_2}^\mathit{tors}),$$
where $N = N(t_1, t_2)$, and assume that $x_1$ is transcendental.  Because it is a torsion image for both parameters, $\Q(x_1, t_1, t_2)$ and therefore also the field 
	$$L := \Q(t_1, t_2, x_1,\cdots, x_N)$$ 
are of transcendence degree one.  Consequently $L$ is isomorphic to a function field $k=K(X)$ for a number field $K$ and an algebraic curve $X$ defined over $\Qbar$.  Via the identification of $L$ with $k$, there exists an algebraic point $\gamma \in X(\Kbar)$ with distinct specializations $x_i(\gamma) \in \P^1(\Qbar)$ for $i = 1, \ldots, N$ and 
  $$t_1(\gamma)\neq t_2(\gamma)\in \Qbar\setminus \{0, 1\}.$$
The division relations in $L$ imply that the specializations $E_{t_1(\gamma)}$ and $E_{t_2(\gamma)}$ have at least $N$
common torsion images, contradicting \eqref{common torsion number}.  Therefore, we must have 
  $$|\pi(E_{t_1}^{\mathit{tors}}) \cap \pi(E_{t_2}^{\mathit{tors}})|\leq B$$
for all $t_1\neq t_2\in \C\setminus \{0,1\}$, and the proof of Theorem \ref{Legendretheorem} is complete.

\subsection{Common torsion images}

We obtain the following immediate corollary of Theorem \ref{Legendretheorem}, which is a special case of Conjecture \ref{BFTconj}.  Recall that a standard projection from elliptic curve $E$ to $\P^1$ is any degree-two branched cover that identifies each point $P\in E$ with its inverse $-P$.  

\begin{cor} \label{BFT special case}
There exists a uniform bound $B$ such that 
$$|  \pi_1(E_1^{\mathit{tors}}) \cap \pi_2(E_2^{\mathit{tors}}) | \leq B$$
for any pair of elliptic curves $E_i$ over $\C$ and any pair of standard projections $\pi_i$ for which 
	$$|\pi_1(E_1[2]) \cap \pi_2(E_2[2])| = 3.$$
\end{cor}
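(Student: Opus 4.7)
\bigskip
\noindent
\textbf{Proof proposal.} The plan is to reduce Corollary \ref{BFT special case} directly to Theorem \ref{Legendretheorem} by a projective change of coordinates. The hypothesis tells us that among the four branch values of each standard projection $\pi_i: E_i \to \P^1$, namely $\pi_i(E_i[2])$, exactly three are common to $\pi_1$ and $\pi_2$. I would first choose a M\"obius transformation $\phi \in \PGL_2(\C)$ sending those three common branch values to $\{0, 1, \infty\}$. After replacing each $\pi_i$ by $\phi \circ \pi_i$, the projections $\phi \circ \pi_1$ and $\phi \circ \pi_2$ are still standard projections, each with branch locus of the form $\{0, 1, t_i, \infty\}$ for some $t_i \in \C \setminus \{0,1\}$. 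The assumption $|\pi_1(E_1[2]) \cap \pi_2(E_2[2])| = 3$ forces $t_1 \ne t_2$, since equality would give four common branch values.

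Next I would use the fact that an elliptic curve $E$ together with a standard projection is determined up to isomorphism by the unordered branch locus on $\P^1$. Concretely, any smooth double cover of $\P^1$ branched at $\{0,1,t,\infty\}$ is isomorphic to the Legendre curve $E_t: y^2 = x(x-1)(x-t)$ in such a way that the standard projection becomes $\pi(x,y) = x$, and the isomorphism is an isogeny (in fact a degree-one isomorphism of groups for an appropriate choice of identity). In particular, for $i = 1, 2$ the isomorphism identifies $\pi_i(E_i^{\mathit{tors}})$ with $\pi(E_{t_i}^{\mathit{tors}}) \subset \P^1(\C)$ up to the M\"obius change $\phi$, giving
\[
|\pi_1(E_1^{\mathit{tors}}) \cap \pi_2(E_2^{\mathit{tors}})| \;=\; |\pi(E_{t_1}^{\mathit{tors}}) \cap \pi(E_{t_2}^{\mathit{tors}})|.
\]

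Finally, with $t_1 \ne t_2$ in $\C \setminus \{0,1\}$, I would invoke Theorem \ref{Legendretheorem} to bound the right-hand side by the uniform constant $B$ produced there. This yields the desired uniform bound for the corollary, with the same $B$. There is really no hard step here; the content is all in Theorem \ref{Legendretheorem}, and the only point to verify carefully is the identification of each $(E_i, \pi_i)$ with a Legendre model after normalizing the three common branch values to $\{0,1,\infty\}$, which is a standard fact about hyperelliptic double covers of $\P^1$.
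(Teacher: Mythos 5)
Your proposal is correct and takes essentially the same route as the paper: normalize the three common branch values to $\{0,1,\infty\}$ by a M\"obius transformation, identify each $(E_i, \pi_i)$ with the Legendre model $(E_{t_i}, x)$, observe $t_1 \ne t_2$, and invoke Theorem \ref{Legendretheorem}. The only step you compress is the one the paper makes explicit: after normalizing coordinates, $\pi_i$ need not send the identity $O_i$ to $\infty$, so one replaces $\pi_i$ by $\pi_i^e(P) = \pi_i(P+e)$ for a suitable $e \in E_i[2]$, noting this preserves both $\pi_i(E_i[2])$ and $\pi_i(E_i^{\mathit{tors}})$; your ``for an appropriate choice of identity'' is implicitly the same translation.
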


\begin{proof}
By fixing coordinates on $\P^1$, we may assume that $\pi_1(E_1[2]) \cap \pi_2(E_2[2]) = \{0,1, \infty\}$.  For each $e\in E_i[2]$ the composition $\pi_i^e(P) = \pi_i(P+e)$ is again a standard projection and satisfies $\pi_i^e(E_i^\mathit{tors}) = \pi_i(E_i^\mathit{tors})$.  Therefore, we may assume that $\pi_i(O_i) = \infty$ for the origin $O_i$ of $E_i$, $i = 1,2$.  Putting each $E_i$ into Legendre form now shows that the corollary follows from Theorem \ref{Legendretheorem}.  
\end{proof}  

\bigskip
\section{Proof of Theorems \ref{MMtheorem} and \ref{uniformBogomolov}} \label{MMtheoremsection}

Throughout this section, we let $\mathcal{L}_2$ denote the hypersurface in the moduli space $\cM_2$ consisting of all genus 2 curves $X$ over $\C$ that admit a degree-two map to an elliptic curve; see, e.g., \cite{Shaska:Volklein} for details on $\mathcal{L}_2$.  The surface $\mathcal{L}_2$ consists of all $X$ whose Jacobians admit real multiplication by the real quadratic order of discriminant 4, as explained in the proof of \cite[Theorem 4.10]{McMullen:genus2}.  

For any smooth, compact, genus 2 curve over $\C$, and for any Weierstrass point $P$ on $X$,
	$$|j_P(X) \cap J(X)^{\mathit{tors}}| \geq 6$$
as the difference of two Weierstrass points is torsion.  On the other hand, any curve $X$ of genus $g \geq 2$ has $|j_P(X) \cap J(X)^{\mathit{tors}}| \leq 2$ for all but finitely many $P$, by Baker and Poonen \cite{Baker:Poonen:torsion}, so an Abel-Jacobi map based at a Weierstrass point has in this sense a large number of torsion images.  

In this section we deduce Theorem \ref{MMtheorem} from Corollary \ref{BFT special case}, providing a uniform upper bound on $|j_P(X) \cap J(X)^{\mathit{tors}}|$ for all $X$ in $\mathcal{L}_2$.  We also deduce Theorem \ref{uniformBogomolov} from Theorem \ref{LegendreBogomolov}.

\subsection{Genus 2 curve from a pair of elliptic curves} \label{from 1 to 2}
Suppose that $\pi_1: E_1 \to \P^1$ and $\pi_2: E_2 \to \P^1$ are standard projections on elliptic curves $E_i$ such that 
	$$|\pi_1(E_1[2]) \cap \pi_2(E_2[2])| = 3,$$
as in Corollary \ref{BFT special case}.  Recall that standard projections are degree-two branched covers $\pi: E \to \P^1$ such that $\pi(P) = \pi(-P)$ for all points $P \in E$, and so they have simple critical points at the four points of $E[2]$.  Consider the diagonal $D \subset \P^1\times\P^1$, and lift $D$ to a curve $C \subset E_1\times E_2$ via $\Pi = \pi_1 \times \pi_2$.   Let $\nu: X \to C$ normalize $C$, noting that the degree four map $\Pi \circ \nu: X \rightarrow D$ has branch locus $\pi_1(E_1[2]) \cup \pi_2(E_2[2])$, with each branch point the image of two points in $X$, each of multiplicity two.   By Riemann-Hurwitz, the genus of $X$ is 2, and by construction, the curve $X$ is in $\mathcal{L}_2$ in $\cM_2$.  Note that $X$ maps to both of the elliptic curves $E_1$ and $E_2$ with degree 2.

\subsection{A pair of elliptic curves from a genus 2 curve}
Here we observe that every $X \in \mathcal{L}_2$ arises from the construction described in \S\ref{from 1 to 2}.  In particular, admitting a degree-two branched cover $X \to E_1$ to an elliptic curve $E_1$ implies that $X$ also admits a second degree-two branched cover $X \to E_2$.  The proof of the following proposition shows how the curve $E_2$ arises:

\begin{prop} \label{surface} Every $X \in \mathcal{L}_2$ is the lift of the diagonal under a product of standard projections $\pi_i$ on elliptic curves $E_i$ for which 
	$$|\pi_1(E_1[2]) \cap \pi_2(E_2[2])| = 3.$$
Moreover, there is a Weierstrass point $Q \in X(\mathbb{C})$ and a degree-four isogeny $\Phi: J(X) \rightarrow E_1 \times E_2$ such that
	$$\Phi \circ j_Q(X) =(\pi_1 \times \pi_2)^{-1} D \;\mbox{ in }\; E_1\times E_2$$
where $D$ is the diagonal in $\P^1\times\P^1$,  $J(X)$ is the Jacobian of $X$, and $j_Q$ is the Abel-Jacobi embedding associated to $Q$.
\end{prop}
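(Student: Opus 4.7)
The plan is to exhibit a Klein four-subgroup of $\Aut(X)$ containing the hyperelliptic involution and a bielliptic one, use it to construct a second elliptic quotient $E_2$ together with a pair of standard projections to a common $\P^1$, then identify $X$ with the normalization of the fibre product $E_1\times_{\P^1}E_2$, and finally promote the resulting morphism $X\to E_1\times E_2$ to an isogeny of Jacobians via Abel--Jacobi.

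Since $X$ is bielliptic, fix an involution $\tau_1\in\Aut(X)$ with $X/\tau_1=E_1$. The hyperelliptic involution $\sigma$ is central in $\Aut(X)$, so $\sigma$ and $\tau_1$ commute and generate a Klein four-group $V=\{1,\sigma,\tau_1,\tau_2\}$ with $\tau_2:=\sigma\tau_1$. To see that $E_2:=X/\tau_2$ is elliptic, decompose $\H^0(X,\Omega^1_X)$ into $V$-isotypic components: since $\sigma^*=-1$ on $\H^0(X,\Omega^1_X)$ and the $\tau_1^*$-invariant subspace is the one-dimensional space $\H^0(E_1,\Omega^1_{E_1})$, the remaining one-dimensional piece is $\tau_2^*$-invariant, so $g(E_2)=1$.

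Applying Riemann--Hurwitz to the degree-$4$ quotient $X\to X/V$ gives $X/V=\P^1$. Next, $\sigma$ descends to an involution $\bar\sigma_i$ on $E_i$ with $E_i/\bar\sigma_i=X/V=\P^1$, which must have $4$ fixed points by Riemann--Hurwitz; placing the origin of $E_i$ at one of them makes $\bar\sigma_i$ equal to negation and realizes $\pi_i:E_i\to\P^1$ as a standard projection. Since both compositions $\pi_i\circ\varphi_i$ coincide with the quotient $X\to X/V$, the product morphism $(\varphi_1,\varphi_2):X\to E_1\times E_2$ lands in $C:=(\pi_1\times\pi_2)^{-1}(D)$. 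On the abelian surface $E_1\times E_2$ the divisor $C$ has class $2[\{O_1\}\times E_2]+2[E_1\times\{O_2\}]$, hence $C\cdot C=8$ and $g_a(C)=5$ by adjunction. As $g(X)=2$ and $(\varphi_1,\varphi_2):X\to C$ is generically one-to-one, $C$ has total $\delta$-invariant $3$; at each point $x\in\pi_1(E_1[2])\cap\pi_2(E_2[2])$ the curve $C$ acquires an ordinary node at the unique preimage in $E_1\times E_2$ (two smooth branches crossing transversally), and is smooth elsewhere. Thus $|\pi_1(E_1[2])\cap\pi_2(E_2[2])|=3$.

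Finally, pick a Weierstrass point $Q$ of $X$ and place the origin of $E_i$ at $\varphi_i(Q)\in E_i[2]$ (a $2$-torsion point, since $\sigma(Q)=Q$ forces $\bar\sigma_i(\varphi_i(Q))=\varphi_i(Q)$). Set $\Phi:=((\varphi_1)_*,(\varphi_2)_*):J(X)\to E_1\times E_2$, so that $\Phi\circ j_Q(p)=(\varphi_1(p),\varphi_2(p))$ and hence $\Phi\circ j_Q(X)=C$. Using $\sigma^*=-1$ on $J(X)$, we have $\tau_2^*=-\tau_1^*$, so $\ker\Phi=\ker(1+\tau_1^*)\cap\ker(1-\tau_1^*)\subseteq J(X)[2]$, making $\Phi$ an isogeny. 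To pin down $\deg\Phi=4$, pull back the product polarization $\Xi=\{O_1\}\times E_2+E_1\times\{O_2\}$ and check the numerical equivalence $\Phi^*\Xi=\varphi_1^*E_1+\varphi_2^*E_2\equiv 2\Theta$; combined with $(\Phi^*\Xi)^2=\deg\Phi\cdot\Xi^2=2\deg\Phi$ and $\Theta^2=2$ on the principally polarized surface $J(X)$, this yields $\deg\Phi=4$. The most delicate step will be this last numerical equivalence, which requires identifying $\varphi_1^*E_1$ and $\varphi_2^*E_2$ as complementary elliptic subvarieties of $J(X)$ and computing their intersection numbers with the theta divisor.
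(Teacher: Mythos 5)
Your approach is genuinely different from the paper's. The paper works with the explicit affine model $y^2 = x^6 - s_1 x^4 + s_2 x^2 - 1$ of Shaska--V\"olklein, writes down the two degree-two maps $(x,y)\mapsto(x^2,y)$ and $(x,y)\mapsto(1/x^2, iy/x^3)$ to elliptic curves, and reads off the three common branch points directly; you instead run a coordinate-free argument using the Klein four-group $V=\langle\sigma,\tau_1\rangle\subset\Aut(X)$, the isotypic decomposition of $\H^0(X,\Omega^1_X)$ to get the second elliptic quotient, Riemann--Hurwitz to identify $X/V=\P^1$, and adjunction plus the $\delta$-invariant to count the common branch points. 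The conceptual route buys you a cleaner explanation of \emph{why} there is a second elliptic quotient and why the count is exactly $3$ (it is forced by $g_a(C)=5$ versus $g(X)=2$), at the cost of needing a few unstated verifications: that $\varphi=(\varphi_1,\varphi_2):X\to C$ is birational (if it factored through an involution $\iota$ then $\iota\in\{1,\tau_1\}\cap\{1,\tau_2\}=\{1\}$), and that $C$ is irreducible (any component contracted by $\pi_1\times\pi_2$ would violate finiteness of that map). Both points are routine but should be said.

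The one real gap is the computation $\deg\Phi=4$. You reduce it to the numerical equivalence $\Phi^*\Xi\equiv 2\Theta$ and explicitly defer the verification (``the most delicate step''). That equivalence is not automatic on a bielliptic Jacobian, whose N\'eron--Severi group has rank $\geq 2$, so it does need an argument. However, you have already assembled the ingredients for a much shorter finish, which is essentially what the paper does: you know $\ker\Phi\subset J(X)[2]$, and every nonzero class in $J(X)[2]$ is $[P_i-P_j]$ for distinct Weierstrass points $P_i,P_j$. Since $\Phi\circ j_Q=\varphi$ on points, $[P_i-P_j]\in\ker\Phi$ iff $\varphi(P_i)=\varphi(P_j)$, i.e.\ iff $\{P_i,P_j\}$ is a pair of preimages of a node of $C$. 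You have shown $C$ has exactly three nodes, each with two preimages in $X$, and (by the stabilizer analysis in $V$) those preimages are Weierstrass points; so the six Weierstrass points are partitioned into three pairs over the three nodes, giving exactly three nontrivial kernel classes and $\deg\Phi = 4$. I would replace the polarization argument with this count.
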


\begin{proof}
As noted by \cite{Shaska:Volklein} and attributed to Jacobi \cite{Jacobi:review}, each curve $X \in \mathcal{L}_2$ has an affine model 
$$C: y^2 = x^6 - s_1 x^4 + s_2 x^2 - 1,$$
where the polynomial on the right has non-zero discriminant.  $C$ admits degree two maps $(x, y) \mapsto (x^2, y)$ and $(x, y) \mapsto (1/x^2, iy/x^3)$ to elliptic curves with affine presentation
$$E_1: y^2 = x^3 - s_1 x^2 + s_2 x - 1$$
and 
$$E_2: y^2 = x^3 - s_2 x^2 + s_1 x - 1,$$
respectively, defining a map $\nu: X \to E_1\times E_2$.  For each of these curves, the $x$-coordinate projection $\pi_x$ is standard, so $\pi_1:=\pi_x$ and $\pi_2 := 1/\pi_x$ are standard projections for $E_1$ and $E_2$ respectively.  The projection $\pi_1$ ramifies over $\{ \infty, r_1, r_2, r_3 \}$ and $\pi_2$ ramifies over $\{ 0, r_1, r_2, r_3 \}$, where $\{ r_1, r_2, r_3 \}$ are the distinct, nonzero roots of $x^3 - s_1 x^2 + s_2 x - 1$.  Thus
	$$|\pi_1(E_1[2]) \cap \pi_2(E_2[2])| = 3.$$
Define $\Pi := \pi_1 \times \pi_2$, noting that for $(x, y) \in C$, we have 
	$$\Pi \circ \nu (x,y) = \Pi(x^2, 1/x^2) = (x^2, x^2).$$ 
Thus $\Pi \circ \nu (X) = D$, where $D \subset \mathbb{P}^1 \times \mathbb{P}^1$ is the diagonal.  

Fix $r \in \pi_1(E_1[2]) \cap \pi_2(E_2[2])$, and equip each $E_i$ with a group structure such that the identity lies above $r$.  Observe that the $[-1]$-involution on $E_1\times E_2$ induces the hyperelliptic involution on $X$. In particular, the Weierstrass points on $X$ are the six preimages of $\pi_1(E_1[2]) \cap \pi_2(E_2[2])$ under $\Pi \circ \nu$. Choose $Q \in X$ such that $\Pi(\nu(Q)) = (r, r)$, so that $Q$ is Weierstrass and $\nu$ factors as $\Phi \circ j_Q$ for some isogeny $\Phi: J(X) \to E_1 \times E_2$.  The nontrivial elements of the kernel of $\Phi$ are precisely the three $2$-torsion points in $J(X)$ which are differences of Weierstrass points mapping to the same point in the diagonal $D \subset \P^1\times\P^1$.  Thus $\Phi$ is degree four as claimed, completing the proof.
\end{proof}

\subsection{Proof of Theorem \ref{MMtheorem}}
Fix $X \in \mathcal{L}_2$.  From Proposition \ref{surface}, we have elliptic curves $E_1$ and $E_2$ and a Weierstrass point $Q \in X$ such that 
$$|j_Q(X) \cap J(X)^\mathit{tors}| \leq 16 \, |\pi_1(E_1^\mathit{tors}) \cap \pi_2(E_2^\mathit{tors})|,$$
for a pair of standard projections $\pi_i: E_i\to \P^1$ satisfying $|\pi_1(E_1[2]) \cap \pi_2(E_2[2])| = 3$.  Given any other Weierstrass point $P \in X$, we have $[P - Q] \in J(X)^\mathit{tors}$, so we conclude that 
$$|j_P(X) \cap J(X)^\mathit{tors}| = |j_Q(X) \cap J(X)^\mathit{tors}| \leq 16 B,$$
where $B$ is the constant of Corollary \ref{BFT special case}.

\subsection{Proof of Theorem \ref{uniformBogomolov}}
Fix $X \in \mathcal{L}_2 \subset \cM_2$, defined over $\Qbar$.  From Proposition \ref{surface} there is a pair $t_1 \not= t_2$ in $\Qbar\setminus\{0,1\}$ and an isogeny $\Phi: J(X) \to E_{t_1} \times E_{t_2}$ of degree 4 so that $\Pi\circ \Phi \circ j_Q (X)$ is the diagonal in $\P^1\times\P^1$, where $\Pi = \pi \times\pi$ and $Q$ is a Weierstrass point on $X$.  Recall from \S\ref{canonical height} that the N\'eron-Tate canonical height on $\hat{h}_{E_t}$ on $E_t$ satisfies
	$$\hat{h}_{E_t}(P) = \frac12 \, \hat{h}_t(\pi(P))$$
for all $P \in E_t(\Qbar)$ and each $t\in \Qbar\setminus\{0,1\}$.  

Let 
	$$D = \{O_1\}\times E_{t_2} + E_{t_1} \times \{O_2\}$$
be a divisor on $E_{t_1}\times E_{t_2}$ where $O_i$ denotes the identity element of $E_{t_i}$, and let $L_D$ be the associated line bundle.  Let $L_X = \Phi^* L_D$ on $J(X)$, and let $\hat{h}_{L_X}$ be the associated N\'eron-Tate canonical height on $J(X)(\Qbar)$.  By the functoriality of canonical heights \cite[Theorem B.5.6]{Hindry:Silverman}, we have 
\begin{eqnarray*} 
\hat{h}_{L_X}(x) &=& \hat{h}_{L_D}(\Phi(x)) \\
	&=& \hat{h}_{E_{t_1}}(\Phi(x)_1) + \hat{h}_{E_{t_2}}(\Phi(x)_2) \\
	&=& \frac12 \left(\hat{h}_{t_1}(\pi(\Phi(x)_1) )+ \hat{h}_{t_2}(\pi(\Phi(x)_2)) \right),
\end{eqnarray*}
where $\Phi(x) = (\Phi(x)_1, \Phi(x)_2)$ in $E_{t_1}\times E_{t_2}$.  Restricting to the points $x \in j_P(X)(\Qbar)$, so that $\pi(\Phi(x)_1) = \pi(\Phi(x)_2)$ in $\P^1$, the theorem now follows from Theorem \ref{LegendreBogomolov}.


\bigskip \bigskip

\def\cprime{$'$}

\end{document}